\title{Disjoint cycles covering specified vertices in bipartite graphs with partial degrees
\thanks{This work is supported by NNSF of China (No. 11901246 (to SJ), 12071260 (to JY)) and Research Fund of Jianghan University (No.1019/06310001) (to SJ).
}
}
\author{Suyun Jiang$^1$\thanks{Corresponding author. Emails: jiang.suyun@163.com (S. Jiang),  yanj@sdu.edu.cn (J. Yan) } \hspace{+12pt}
Jin Yan$^2$
\unskip\\[.5em]
{\small $^1$  Institute for Interdisciplinary Research, Jianghan University, Wuhan, Hubei 430056, China }\\
{\small $^2$
School of Mathematics, Shandong University, Jinan, Shandong 250100, China
}
}
\date{}
\newtheorem{thm}{Theorem}%[section]
\newtheorem{lem}{Lemma}[section]
\newtheorem{conjecture}{Conjecture}
\newtheorem{clm}{Claim}[section]
\theoremstyle{definition}
\newtheorem{definition}{Definition}
\theoremstyle{remark}
\numberwithin{equation}{section}
\begin{document}
\setlength{\baselineskip}{18pt}

\maketitle

\vspace{-24pt}
%%%%%%%%%%%%%%%%%%%%%%%%%%%%%

\begin{abstract}
Let $k$ be a positive integer. Let $G$ be a balanced bipartite graph of order $2n$ with bipartition $(X, Y)$, and $S$ a subset of $X$. Suppose that every pair of nonadjacent vertices $(x,y)$ with $x\in S, y\in Y$ satisfies $d(x)+d(y)\geq n+1$.
We show that if $|S|\geq 2k+2$, then $G$ contains $k$ disjoint cycles covering $S$ such that each of the $k$ cycles contains at least two vertices of $S$. Here, both the degree condition and the lower bound of $|S|$ are best possible. And we also show that if $|S|=2k+1$, then $G$ contains $k$ disjoint cycles such that each of the $k$ cycles contains at least two vertices of $S$.

\medskip
\noindent
\textit{Keywords}: Bipartite graphs; Disjoint cycles; Coverings; Partial degree

\noindent
\textit{AMS Subject Classification}: 05C70
\end{abstract}

\section{Introduction}
\label{Introduction}
%%%%%%%%%%%%%%%%%%%%%%%%%%%%%%%%%
All graphs in this paper are finite and simple. For terminology and notation not defined, we refer the readers to \cite{Diestel}. Let $G$ be a graph with vertex set $V(G)$ and edge set $E(G)$. If $G$ is a bipartite graph with bipartition $(X, Y)$, then we denote it by $G[X, Y]$, and say $G[X, Y]$ is balanced if $|X| = |Y|$.
For a vertex $v$ of $G$, we denote by $d_G(v)$ and $N_G(v)$ the  \textit{degree} and the \textit{neighborhood} of $v$ in $G$, respectively. 
For a subset $S$ of $V(G)$, we define $\delta(S)=\min\{d_G(v): v\in S\}$ and for $G=G[X,Y]$ we define
$$\sigma_{1,1}(S) = \min\{d_G(x) + d_G(y) : x\in X\cap S, y\in Y \mbox{\ or\ } x\in X, y\in Y\cap S, xy \notin E(G)\}$$
 if $G[X\cap S,Y]$ or $G[X,Y\cap S]$ is not a complete bipartite graph; otherwise, define $\sigma_{1,1}(S)=\infty$.
In particular, we denote $\delta(V(G))$ by $\delta(G)$ and denote $\sigma_{1,1}(S)$ by $\sigma_{1,1}(G)$ if $S\supseteq X$ or $S\supseteq Y$.
%In particular, if $S=V(G)$ then we denote $\delta(S)$ by $\delta(G)$; if $G=G[X,Y]$ and $S\supseteq X$ or $S\supseteq Y$, then we denote $\sigma_{1,1}(S)$ by $\sigma_{1,1}(G)$.
If the graph $G$ is clear from the context, we often omit the graph parameter $G$ in the graph invariant. 

%In this paper, ``disjoint’’ always means ``vertex-disjoint’’.

For $S\subseteq V(G)$, we call $G$ \textit{$S$-cyclable} if $G$ has a cycle covering $S$. 
Bollob\'{a}s and Brightwell \cite{Bollobas} and Shi \cite{Shi} considered the sufficient partial degree condition $\delta(S)$ for graphs being $S$-cyclable and gave the following result. 
%In fact, the result in \cite{Bollobas} is much stronger.

\begin{thm}[Bollob\'{a}s and Brightwell \cite{Bollobas}, Shi \cite{Shi}]\label{Shi}
Let $G$ be a 2-connected graph of order $n$ and $S\subseteq V(G)$. If $\delta(S)\geq \frac{n}{2}$, then $G$ is $S$-cyclable.
\end{thm}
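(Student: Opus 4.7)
The plan is to follow the Dirac-style blueprint for proving cycle-coverage results, adapted to the set $S$. Among all cycles in $G$ that meet $S$, let $C$ be chosen so that $|V(C)\cap S|$ is maximum, and subject to this, $|V(C)|$ is maximum. If $V(C)\supseteq S$ we are done; otherwise fix a vertex $v\in S\setminus V(C)$, and the goal is to produce a cycle $C'$ with either strictly more $S$-vertices than $C$, or with $|V(C')\cap S|=|V(C)\cap S|$ but $|V(C')|>|V(C)|$, contradicting the extremal choice.

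First I would invoke the 2-connectivity of $G$: by Menger's theorem (equivalently, the fan lemma) there exist two internally disjoint paths $P_1,P_2$ from $v$ into $V(C)$, whose interiors avoid $V(C)$, meeting $C$ at distinct endpoints $u_1,u_2\in V(C)$. Fix an orientation of $C$ and let $A_1,A_2$ be the two arcs of $C$ from $u_1$ to $u_2$. Observe that for $i=1,2$ the cycle $P_1\cup A_i\cup P_2$ already contains $v$, so by the extremality of $C$ each $A_i$ must carry enough $S$-vertices in its interior to offset the gain of $v$ (and any $S$-vertices on the interiors of $P_1,P_2$). This forces $S$-vertices onto both arcs and, in particular, rules out the trivial replacement.

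The heart of the proof is a P\'osa-type rotation that exploits $d(v)\ge n/2$. Let $W=N(v)\cap V(C)$ and, relative to the fixed orientation of $C$, let $W^+$ be the set of successors of vertices in $W$. For each $w\in W$, replacing a segment of $C$ ending at $w$ by the edge $vw$ together with one of the fan paths $P_i$ produces a new cycle through $v$. A standard double-counting along $C$ shows that if no such rotation preserves every vertex of $V(C)\cap S$, then $W^+$ must be disjoint from $N(u_i)\cup\{u_i\}$ for $i=1,2$; coupled with the bound $|W|\ge n/2-|V(G)\setminus V(C)|$ (using $d(v)\ge n/2$), this violates a simple count of vertices on the arcs. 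Consequently an edge $w^+u_i\in E(G)$, or more generally an edge joining $W^+$ to a suitable segment, is forced, and splicing in this edge together with $v$ and a fan path yields the required $C'$.

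The main obstacle I expect is the bookkeeping needed to guarantee that the new cycle $C'$ does not shed $S$-vertices when we splice in the paths $P_1,P_2$: the interiors of $P_1,P_2$, and the discarded segments of $C$, may themselves lie in $S$, so the rotation has to be chosen to shift across arcs where $S$ is sparse. In the degenerate cases, where $W$ is concentrated on a single arc or where the fan paths are long, I expect to invoke 2-connectivity a second time (via an alternative Menger application) to reroute the cycle around offending $S$-vertices; this is where the argument becomes most delicate, since the extremality of $C$ has to be played off against the degree condition simultaneously on two sides.
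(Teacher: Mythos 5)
This statement is Theorem~\ref{Shi}, which the paper only quotes from Bollob\'as--Brightwell and Shi and does not prove, so your attempt can only be measured against the known proofs and on its own terms. As it stands it has a genuine gap at its core. The decisive step --- the ``standard double-counting along $C$'' --- is asserted, not carried out, and in the form you state it it cannot work: you play $W^+=\bigl(N(v)\cap V(C)\bigr)^+$ off against $N(u_1)\cup\{u_1\}$ and $N(u_2)\cup\{u_2\}$, but the fan endpoints $u_1,u_2$ are arbitrary vertices of $C$ that need not lie in $S$, and the hypothesis $\delta(S)\ge n/2$ gives no lower bound whatsoever on their degrees. So the disjointness of $W^+$ from $N(u_i)\cup\{u_i\}$, together with $|W|\ge n/2-|V(G)\setminus V(C)|$, does not ``violate a simple count of vertices on the arcs'': if $d(u_i)$ is small there is nothing to violate. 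Any Dirac/P\'osa-type contradiction here must be obtained by summing the degrees of \emph{two vertices of $S$} --- typically $v$ and a carefully chosen $S$-vertex on $C$ (for instance the first $S$-vertex met along an arc beyond an attachment or neighbour of $v$), after first proving the relevant non-adjacencies --- and that choice, not the fan endpoints, is what makes the known proofs work.

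The second problem is that the part you yourself flag as ``most delicate'' --- guaranteeing that the spliced cycle $C'$ does not shed $S$-vertices from the discarded segment of $C$ --- is exactly the crux of the theorem, and your sketch offers no mechanism for it beyond the hope of ``invoking 2-connectivity a second time.'' Your preliminary observation is fine (if the interior of an arc $A_i$ contained no $S$-vertex, then $P_1\cup A_{3-i}\cup P_2$ would beat $C$ in the first extremal criterion), but once both arcs are forced to carry $S$-vertices, the rotation you propose removes a segment that may contain them, and the extremality of $C$ then blocks rather than helps you, unless the rotation is set up so that the discarded piece is provably $S$-free. Establishing that --- via the non-adjacency lemmas between $v$, its neighbours' successors, and the nearest $S$-vertices on each arc, followed by the degree-sum contradiction restricted to $S$ --- is the actual content of the Bollob\'as--Brightwell/Shi argument, and it is missing here. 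So the proposal is a plausible opening strategy, but not yet a proof.
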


%
%The following sufficient condition due to Dirac \cite{Dirac} is classical and well known.

%Many researchers have investigated various sufficient conditions for the existence of disjoint cycles containing specified elements in a graph. In this paper, we consider the partial degree conditions that guarantee the existence of cycles containing specified vertex set.

%Bollob\'{a}s and Brightwell \cite{Bollobas} considered partial degree condition for cyclable in graphs. They proved that if $G$ is a graph on $n$ vertices and $W$ is a subset of $V(G)$ with $|W|\geq 3$ and $\delta(W)\geq d$, then there is a cycle through at least $\lfloor\frac{|W|}{n/d-1}\rfloor$ vertices of $W$. When $d=n/2$, we have the following result, which is a generalization of Dirac's \cite{Dirac} result.
%
%\begin{thm}[Bollob\'{a}s and Brightwell \cite{Bollobas}]\label{bollobas}
%Let $G$ be a graph of order $n$ and $W$ a subset of $V(G)$ with $|W|\geq 3$. If $\delta(W)\geq n/2$, then $W$ is cyclable.
%\end{thm}

Abderrezzak, Flandrin and Amar \cite{Abderrezzak} considered the bipartite version of Theorem \ref{Shi}.

\begin{thm}[Abderrezzak, Flandrin and Amar \cite{Abderrezzak}]\label{Abderrezzak}
Let $G[X,Y]$ be a 2-connected balanced bipartite graph of order $2n$ and $S\subseteq X$. If $\sigma_{1,1}(S)\geq n+1$, then $G$ is $S$-cyclable.
\end{thm}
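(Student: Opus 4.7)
The plan is to argue by contradiction using an extremal cycle argument. Suppose $G$ is a counterexample and let $C$ be a cycle of $G$ with $|S\cap V(C)|$ maximum and, subject to that, of maximum length; by assumption some $s_0\in S$ lies outside $V(C)$. Invoking $2$-connectedness and Menger's theorem, I obtain two internally disjoint $(s_0,V(C))$-paths $P_1,P_2$ meeting $C$ only at distinct endpoints $u_1,u_2\in V(C)$, splitting $C$ into two arcs $A_1,A_2$ joining $u_1$ and $u_2$. The two ``candidate'' replacement cycles $C_i=A_i\cup P_1\cup P_2$ ($i=1,2$) must each satisfy $|S\cap V(C_i)|\le|S\cap V(C)|$, which constrains how vertices of $S$ are distributed between $A_1$, $A_2$ and the interiors of $P_1,P_2$.

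Next I would exploit the bipartite structure. Orient $C$ so that its odd-indexed vertices lie in $X$ and its even-indexed vertices lie in $Y$; then $s_0\in X$ gives $N(s_0)\cap V(C)\subseteq Y$. Since $\sigma_{1,1}(S)$ is a condition on non-adjacent pairs in $(X\cap S)\times Y$, I would single out a non-neighbour $y^*\in Y$ of $s_0$---such a $y^*$ exists because otherwise $d(s_0)=|Y|=n$ and a direct use of $P_1,P_2$ already extends $C$ to an $S$-cycle through $s_0$---and apply $d(s_0)+d(y^*)\ge n+1$. The heart of the proof is a P\'osa-type rotation: for each $y\in N(s_0)\cap V(C)$ its successor $y^+\in X$ on $C$ is shown to be a non-neighbour of $y^*$, for otherwise a rotation of $C$ through $s_0$ using the chord $y^*y^+$ together with the path structure around $y^*$ produces a rearranged cycle covering strictly more of $S$, contradicting extremality. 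A parallel argument controls $N(s_0)\setminus V(C)$ by using the paths $P_1,P_2$ in place of cycle-arcs.

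Assembling the resulting injections yields $d(s_0)+d(y^*)\le |X|=n$, in direct contradiction with the Ore-type hypothesis. The main obstacle will be making the rotation step rigorous when $P_1$ or $P_2$ have internal vertices: simple chord-insertion no longer suffices, and one must carry out iterated rotations that may walk the endpoints $u_1,u_2$ around $C$ before the forbidden adjacency configuration emerges. A secondary bipartite-specific subtlety is ensuring that each intermediate object one produces is an even closed walk so that it is a genuine cycle of $G$, and that the final counting is done relative to $|X|=n$ rather than $|V(G)|=2n$; these details are exactly what yield the sharp threshold $n+1$ in the hypothesis.
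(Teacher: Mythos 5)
You should note first that the paper does not actually prove this statement: it is quoted as a known theorem of Abderrezzak, Flandrin and Amar \cite{Abderrezzak}, and the only in-paper connection is Remark (2), which observes that Theorem \ref{theorem} with $k=1$ re-derives it when $|S|\ge 4$. So your attempt can only be judged on its own merits, and as written it is a plan with a genuine gap rather than a proof. The central step --- fixing an \emph{arbitrary} non-neighbour $y^*\in Y$ of $s_0$ and claiming that $y^*y^+\notin E$ for every $y\in N(s_0)\cap V(C)$ --- is not justified by the extremality of $C$. The chord $y^*y^+$ involves neither $s_0$ nor the attachment paths $P_1,P_2$, so ``rotating $C$ through $s_0$ using the chord $y^*y^+$'' does not by itself produce any cycle containing $s_0$, let alone one retaining all of $S\cap V(C)$; hence no contradiction with the choice of $C$ follows. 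For this kind of implication $y^*$ must be tied to the structure (typically a vertex of $C$ on a prescribed arc between $u_1$ and $u_2$, or the end of a maximal path in $G-C$), which is exactly the part you have not specified. A further bipartite-specific obstruction you gloss over: a vertex $s_0\in X$ cannot be inserted between consecutive vertices of $C$, since all its potential neighbours on $C$ lie in $Y$ and are at distance two apart, so any ``insertion'' either deletes an $X$-vertex of $C$ (possibly in $S$) or skips a whole arc; even your preliminary reduction ``if $d(s_0)=n$ then $P_1,P_2$ already extend $C$'' fails for the same reason, because the cycle $A_i\cup P_1\cup P_2$ may lose many $S$-vertices lying on the other arc.

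Beyond that, the final count $d(s_0)+d(y^*)\le |X|=n$ requires an injection that also handles $N(s_0)\setminus V(C)$ and the internal vertices of $P_1,P_2$; you acknowledge both issues (``a parallel argument controls\dots'', ``the main obstacle will be making the rotation step rigorous\dots'') without resolving them, and these are precisely the places where the published proof has to work hardest. In short, the Ore/P\'osa-style outline is a reasonable starting point, but the specific mechanism you propose for generating forbidden adjacencies does not function as stated, so the argument does not yet establish the theorem. If you want a proof within the framework of this paper, the cleanest route for $|S|\ge 4$ is the one indicated in Remark (2): apply Theorem \ref{theorem} with $k=1$; the cases $|S|\le 3$ then need a separate (short) argument using $2$-connectedness.
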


When $S=V(G)$ or $S=X$ if $G=G[X,Y]$ and $G[X,Y]$ is balanced, $G$ is $S$-cyclable iff $G$ is hamiltonian. Thus Theorem \ref{Shi} implies Dirac’s Theorem \cite{Dirac} and Theorem \ref{Abderrezzak} implies the result of Moon and Moser \cite{Moon} which states that if $G$ is a balanced bipartite graph of order $2n$ with $\sigma_{1,1}(G)\geq n+1$ then $G$ is hamiltonian.

%\begin{Thm}[Dirac \cite{Dirac}]\label{Dirac}
%Let $G$ be a graph of order $n\ge 3$. If $\delta(G)\geq \frac{n}{2}$, then $G$ is hamiltonian.
%\end{Thm}
% 
%In \cite{Moon}, Moon and Moser extended Theorem \ref{Dirac} to bipartite graphs as follows.
%
%\begin{Thm}[Moon and Moser \cite{Moon}]\label{Moon}
%Let $G$ be a balanced bipartite graph of order $2n$. If $\sigma_{1,1}(G)\geq n+1$, then $G$ is hamiltonian.
%\end{Thm}

%To generalize results on hamiltonian, partial degree conditions $\delta(S)$ and $\sigma_{1,1}(S)$ for the existence of a cycle covering $S$ have been investigated (see Theorems \ref{Shi} and \ref{Abderrezzak}). For convenience, we first give the following definition.
It is natural to consider the sufficient partial degree conditions for disjoint cycles or disjoint cycles covering $S$. For convenience, we first give the following definition.

\begin{definition}
Let $k$ be a positive integer. Let $S\subseteq V(G)$, and $S\subseteq X$ if $G=G[X,Y]$.

(1) A cycle $C$ of $G$ is \textit{feasible} if $|V(C)\cap S|\ge 3$, or $|V(C)\cap S|\ge 2$ if $G=G[X,Y]$;

(2) A graph $G$ is \textit{$S$-$k$-feasible} if $G$ contains $k$ disjoint feasible cycles;

(3) A graph $G$ is \textit{$S$-$k$-cyclable} if $G$ contains $k$ disjoint feasible cycles covering $S$.
\end{definition}

Wang \cite{Wang Partial} considered the condition $\delta(S)$ forcing a graph to be $S$-$k$-feasible and showed that if $G$ is a graph of order $n$ and $S$ is a subset of $V(G)$ with $\delta(S)\geq \frac{2n}{3}$ and $|S|\ge 3k$, then $G$ is $S$-$k$-feasible. Motivated by Wang’s result, we consider the condition $\sigma_{1,1}(S)$ for bipartite graphs to be $S$-$k$-feasible or $S$-$k$-cyclable and give the following two theorems.

%, where in Theorem \ref{theorem 2} we improve the result of \cite{Jiang} and in Theorem \ref{theorem} we show that the condition $\sigma_{1,1}(S)$ for $S$-cyclable in Theorem \ref{Abderrezzak} also can guarantee the property $S$-$k$-cyclable in bipartite graphs.

%we \cite{Jiang} considered the similar question for bipartite graphs. 

%Wang\cite{Wang Partial} considered the condition $\delta(S)$ that guarantee a graph to be $S$-$k$-feasible and gave the following result.
%
%\begin{thm}[Wang \cite{Wang Partial}]\label{Wang}
%Let $G$ be a graph of order $n$ and $S\subseteq V(G)$. If $\delta(S)\geq \frac{2n}{3}$ and $|S|\ge 3k$, then $G$ is $S$-$k$-feasible.
%\end{thm}

%
%\begin{thm}[Jiang and Yan \cite{Jiang}]\label{Jiang}
%Let $G[X,Y]$ be a balanced bipartite graph of order $2n$ and $S\subseteq X$. If $\sigma_{1,1}(S)\geq n+k$ and $|S|\geq 2k$, then $G$ is $S$-$k$-feasible.
%\end{thm}

%In this paper, we consider the condition $\sigma_{1,1}(S)$ for bipartite graphs to be $S$-$k$-feasible and $S$-$k$-cyclable and give the following two theorems, where in Theorem \ref{theorem 2} we improve the degree condition of Theorem \ref{Jiang} and in Theorem \ref{theorem} we show that the condition $\sigma_{1,1}(S)$ for $S$-cyclable in Theorem \ref{Abderrezzak} also can guarantee the property $S$-$k$-cyclable in bipartite graphs.

\begin{thm}\label{theorem 2}
Let $G[X,Y]$ be a balanced bipartite graph of order $2n$ and $S\subseteq X$. If $\sigma_{1,1}(S)\geq n+1$ and $|S|\geq 2k+1$, then $G$ is $S$-$k$-feasible. Moreover, $G$ contains $k$ disjoint feasible cycles such that either these cycles cover $V(G)$ or each cycle contains exactly two vertices of $S$.
\end{thm}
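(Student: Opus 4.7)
The plan is to proceed by induction on $k$, with Theorem A playing the role of the base case and an extraction-plus-recursion scheme driving the inductive step. For $k=1$ I need to produce a single feasible cycle; after a short direct argument to handle low-connectivity configurations (a putative cut with an $S$-vertex on one side and a $Y$-vertex on the other would force a nonadjacent pair with $d_G(s)+d_G(y)\le n$, contradicting $\sigma_{1,1}(S)\ge n+1$), the subgraph of $G$ relevant for $S$ is $2$-connected, and applying Theorem A to any pair $S'=\{s_1,s_2\}\subseteq S$---using that $\sigma_{1,1}(S')\ge\sigma_{1,1}(S)\ge n+1$---delivers a cycle through $s_1,s_2$, which is feasible.

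For the inductive step $k\ge 2$ the economical candidate is a $4$-cycle $C=s_1y_1s_2y_2s_1$ with $s_1,s_2\in S$, so that $G-V(C)$ is a balanced bipartite graph of order $2(n-2)$ with $|S\setminus\{s_1,s_2\}|\ge 2(k-1)+1$. The principal obstacle is that the degree hypothesis does not transfer cleanly: for a nonadjacent pair $(x,y)$ with $x\in S\setminus\{s_1,s_2\}$ and $y\in Y\setminus\{y_1,y_2\}$,
\[
d_{G-V(C)}(x)+d_{G-V(C)}(y)\ \ge\ d_G(x)+d_G(y)-4\ \ge\ n-3,
\]
while the inductive hypothesis requires $\ge(n-2)+1=n-1$. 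Thus $C$ must be chosen so that for every critical pair the total drop $|N_G(x)\cap\{y_1,y_2\}|+|N_G(y)\cap\{s_1,s_2\}|$ is at most $d_G(x)+d_G(y)-(n-1)$.

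My resolution is a two-level minimality choice: pick $s_1,s_2\in S$ minimizing $d_G(s_1)+d_G(s_2)$ among pairs sharing at least two common $Y$-neighbors, and then pick $\{y_1,y_2\}\subseteq N(s_1)\cap N(s_2)$ maximizing $|\{x\in S\setminus\{s_1,s_2\}:\{y_1,y_2\}\subseteq N(x)\}|$. If a critical pair $(x,y)$ still violates the required bound, then $x\in N(y_1)\cap N(y_2)$ and $y\in N(s_1)\cap N(s_2)$, which produces an alternative feasible $4$-cycle with strictly smaller $d_G(\cdot)+d_G(\cdot)$ sum, contradicting minimality. The pathological case in which no pair in $S$ has two common $Y$-neighbors is handled by a double-count of $|E(G[S,Y])|$ against the hypothesis $\sigma_{1,1}(S)\ge n+1$. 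For the moreover statement I would choose $k$ disjoint feasible cycles minimizing $\sum_{i=1}^{k}(|V(C_i)\cap S|-2)$; if this potential is positive but the collection fails to span $V(G)$, an excess $S$-vertex on some $C_i$ can be swapped along an edge into $V(G)\setminus V(\mathcal C)$ guaranteed by the degree hypothesis, contradicting the extremal choice. The degree bookkeeping in the inductive step is the main technical difficulty.
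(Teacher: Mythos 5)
Your plan hinges on extracting a feasible quadrilateral $C=s_1y_1s_2y_2s_1$ so that the hypothesis descends to $G-V(C)$, and you correctly identify that this is the crux; but the proposed fix does not close the gap. If a critical pair $(x,y)$ has $d_G(x)+d_G(y)=n+1$, the transferred condition already fails when the total drop is $3$, e.g.\ when $x\in N(y_1)\cap N(y_2)$ while $y$ is adjacent to only \emph{one} of $s_1,s_2$; so your claim that a violation forces both $x\in N(y_1)\cap N(y_2)$ and $y\in N(s_1)\cap N(s_2)$ is not true. Moreover, even in the case where both memberships hold, the "alternative feasible $4$-cycle with strictly smaller degree sum" is not exhibited: the natural replacement $xy_1s_1y_2x$ compares $d_G(x)+d_G(s_1)$ with $d_G(s_1)+d_G(s_2)$, and nothing in the hypotheses gives $d_G(x)<d_G(s_2)$ (the violating $x$ may have very large degree). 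The secondary maximization over $\{y_1,y_2\}$ plays no identifiable role in the argument. So the inductive step is not established, and this is not a cosmetic issue: the degree condition genuinely need not survive the deletion of a quadrilateral, which is precisely why the paper does not argue by induction on $k$ at all. Instead it assumes $G$ is not $S$-$k$-feasible, takes $G$ edge-maximal, starts from $k-1$ feasible cycles forming a minimal $S$ system, and uses global counting against that system (Lemmas~\ref{lemma 1}--\ref{lemma 3.2}) together with two structural claims that convert $s$ cycles plus a suitable path configuration into $s+1$ feasible cycles.

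The "moreover" part is also underproved in your sketch. Choosing $k$ feasible cycles minimizing $\sum_i(|S_{C_i}|-2)$ and then "swapping an excess $S$-vertex along an edge into the uncovered part" is essentially asserting the conclusion of Lemma~\ref{lemma 3.1}; but removing an $S$-vertex from a cycle requires re-closing the cycle through chords or outside vertices, and the degree hypothesis alone does not hand you such a reroute. The paper's proof of that lemma needs the system to be minimal first in total length and then in total number of covered $S$-vertices, plus the bound $e(S_{C_1}\cup S_{C_1}^+,C_2)\le \tfrac{1}{2}t|C_2|$ from Lemma~\ref{lemma 6}, before a counting argument yields $|S_{C_i}|=2$ whenever the cycles do not cover $V(G)$. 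Your base case is fine in spirit (and the no-common-neighbours degenerate case can indeed be excluded by counting), but as written the proposal has a genuine gap at its central step and an unsubstantiated step for the covering/exactly-two-$S$-vertices dichotomy.
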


% 2-factor with $k$ disjoint feasible cycles or $G$ contains $k$ disjoint feasible cycles such that each cycle contains exactly two vertices of $S$.
 
\begin{thm}\label{theorem}
Let $G[X,Y]$ be a balanced bipartite graph of order $2n$ and $S\subseteq X$. If $\sigma_{1,1}(S)\geq n+1$ and $|S|\geq 2k+2$, then $G$ is $S$-$k$-cyclable.
\end{thm}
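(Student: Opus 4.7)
The plan is to combine Theorem~\ref{theorem 2} with an extremal argument. Theorem~\ref{theorem 2} applied to $(G,S)$ guarantees that the collection of $k$-tuples of disjoint feasible cycles in $G$ is nonempty, so one may choose among them a family $\mathcal{C}=\{C_1,\dots,C_k\}$ that maximises $|S\cap V(\mathcal{C})|$ and, subject to that, minimises $|V(\mathcal{C})|$. The target is $S\subseteq V(\mathcal{C})$, whence $\mathcal{C}$ witnesses $S$-$k$-cyclability. The dichotomy provided by Theorem~\ref{theorem 2}---either $V(\mathcal{C})=V(G)$ or every cycle in the witnessing family contains exactly two vertices of $S$---guarantees that the maximum is already at least $2k$, so in the remaining case we have $2k\le|S\cap V(\mathcal{C})|<|S|$ and there exists an uncovered $s\in S$ to work with.

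Assume such an uncovered $s\in S$ exists and set $H=G-V(\mathcal{C})$. Since $G$ and every $C_i$ are balanced, $H$ has equal numbers of $X$- and $Y$-vertices; in particular $V(H)\cap Y\ne\emptyset$. The hypothesis $\sigma_{1,1}(S)\ge n+1$ applied to $s$ together with a suitably chosen $Y$-vertex (either an arbitrary non-neighbour of $s$, or, if $s$ is adjacent to all of $Y$, the trivial bound $d(s)=n$) forces many neighbours of $s$ to lie in $V(\mathcal{C})\cap Y$. A pigeonhole over $C_1,\dots,C_k$ then concentrates enough of these neighbours on a single cycle $C_i$ to expose the rotation structure we need.

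The heart of the argument is then a bipartite insertion step. Given enough neighbours of $s$ on $C_i$, I would locate two neighbours $y_a,y_b$ of $s$ on $C_i$ whose cyclic distance on $C_i$ is small enough that the short $C_i$-arc from $y_a$ to $y_b$ contains an $X$-vertex $x$ outside $S$; replacing that arc by the path $y_a s y_b$ produces a new balanced cycle $C_i'$ that retains the two $S$-vertices of $C_i$ and now contains $s$ as well. (A secondary sub-case, where $s$ has a neighbour in $V(H)\cap Y$, is absorbed by first attaching a short $H$-path to $C_i$ before carrying out the swap.) The family $\mathcal{C}'$ obtained from $\mathcal{C}$ by substituting $C_i'$ for $C_i$ is then still a collection of $k$ disjoint feasible cycles, but satisfies $|S\cap V(\mathcal{C}')|>|S\cap V(\mathcal{C})|$, contradicting the choice of $\mathcal{C}$.

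The principal obstacle is the bipartite insertion step itself: each swap must simultaneously preserve the balance of $C_i$ and avoid expelling either of the two $S$-vertices already on $C_i$. I expect this to require a case analysis according to the cyclic positions of the two $S$-vertices of $C_i$ relative to the chosen neighbours $y_a,y_b$, together with a bipartite Ore-type consecutive-neighbour lemma (in the spirit of the proof of Theorem~\ref{Abderrezzak} and the Moon--Moser theorem) to guarantee the existence of the required $X$-vertex $x\notin S$ on the short arc between $y_a$ and $y_b$. Once this insertion lemma is in place, the extremal argument closes in one line.
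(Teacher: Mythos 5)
There is a genuine gap, and it sits exactly where you flagged the ``principal obstacle'': the insertion step cannot be carried out in the extremal configurations that actually arise. First, the hypothesis $\sigma_{1,1}(S)\ge n+1$ is an Ore-type condition on nonadjacent \emph{pairs}; it gives no lower bound on $d(s)$ alone (all non-neighbours of $s$ may simply have huge degree), so the claim that the condition ``forces many neighbours of $s$ to lie in $V(\mathcal{C})\cap Y$'' and the subsequent pigeonhole do not go through -- any counting must be done over sets of several vertices at once (this is why the paper works with $4$- and $6$-vertex sets $L$ and with matchings in $H$, via Lemmas \ref{lemma 1}--\ref{lemma 4}). Second, and more fatally, in the typical extremal situation every $C_i$ is a quadrilateral whose two $X$-vertices both lie in $S$ (minimal systems have $|S_{C_i}|=2$ by Lemma \ref{lemma 3.1}, and quadrilaterals recur throughout the paper's analysis). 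For such a $C_i$ there is no $X$-vertex $x\notin S$ on any arc, so your balanced swap replacing an arc by $y_a s y_b$ necessarily expels one of the two $S$-vertices of $C_i$, and the coverage $|S\cap V(\mathcal{C})|$ does not increase. Covering one extra vertex of $S$ while keeping only $k$ cycles then requires a cycle through at least three prescribed $S$-vertices, i.e.\ a global rerouting, not a local insertion into a single cycle; your secondary sub-case of ``attaching a short $H$-path first'' is precisely this hard step and is not justified.

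This is also why the paper does not argue by maximising coverage over $k$-tuples. Its proof has two stages that your outline is missing: it first produces $s\ge k$ disjoint feasible cycles covering \emph{all} of $S$ (allowing the number of cycles to exceed $k$, using Theorem \ref{theorem 2}, the minimal-$S$-system structure of Lemma \ref{lemma 3.1}, the matching extension of Lemma \ref{lemma 3.2}, and the good-path Claim \ref{keyclaim}), and then reduces $s$ to $k$ by a separate merging argument (Claims \ref{claim 4.1}--\ref{claim 4.4}) that turns three cycles into two while preserving the covering. Without an argument replacing that second stage -- or a genuinely workable version of the insertion lemma that handles $S$-saturated quadrilaterals -- the proposal does not close.
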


\noindent\textbf{Remark.} (1) The lower bound of $|S|$ in Theorems  \ref{theorem 2} and \ref{theorem} and the condition $\sigma_{1,1}(S)$ in Theorem \ref{theorem} all are best possible, and $\sigma_{1,1}(S)\ge n$ is necessary for Theorem \ref{theorem 2} (see Section \ref{concluding remarks}).

(2) When $|S|\ge 4$, Theorem \ref{theorem} implies 
%Moon and Moser’s result \cite{Moon} (consider the case $S=X$, $k=1$) and 
Theorem \ref{Abderrezzak} by letting $k=1$. 

(3) In \cite{Jiang}, we showed that a balanced bipartite graph $G[X,Y]$ of order $2n$ is $S$-$k$-feasible if $S$ is a subset of $X$ with $\sigma_{1,1}(S)\geq n+k$ and $|S|\geq 2k$. Thus Theorem \ref{theorem 2} implies the result of \cite{Jiang} when $|S|\ge 2k+1$.

(4) Note that if $G[X,Y]$ is a balanced bipartite graph and is $X$-$k$-cyclable then $G[X,Y]$ has a 2-factor with $k$ cycles. Thus Theorem \ref{theorem} implies the result of Chiba and Yamashita \cite{Chiba2017} which states that if $G$ is a balanced bipartite graph of order $2n\ge 2(12k+2)$ and $\sigma_{1,1}(G)\ge n+1$ then $G$ has a 2-factor with $k$ cycles.

%The degree condition $\sigma_{1,1}(S)\geq n+k$ is sharp when $k=1$, but the degree condition may be not sharp for $k\ge 2$. In this paper, we improve the degree condition and give the following two results.

%The lower bound of $|S|$ in Theorems \ref{theorem 2} and \ref{theorem}, and the degree condition in Theorem \ref{theorem} both are best possible. Even though the degree condition in Theorem \ref{theorem 2} is not sharp, $\sigma_{1,1}(S)\geq n$ is needed. We will give some examples to illustrate this in Section 5.

The rest of the paper is organized as follows: we present notations and some useful lemmas in Section \ref{pre}, and give the proof of Theorem \ref{theorem 2} and Theorem \ref{theorem} in Sections \ref{pftheorem1} and \ref{pftheorem2}, respectively, and we will give four examples to illustrate Remark (1) and some problems in Section \ref{concluding remarks}.

%We solve the above question and prove the following.
%\begin{thm}\label{theorem 2}
%Let $k$ be a positive integer. Let $G[X,Y]$ be a balanced bipartite graph of order $2n$, and $S$ a subset of $X$. Suppose that $\sigma_{1,1}(S)\geq n+1$. Then the following hold.
%
%\emph{(1)} If $|S|\geq 2k+1$, then $G$ contains $k$ disjoint cycles $C_1, \ldots, C_k$ such that each of the $k$ cycles contains at least two vertices of $S$. In particular, if $|G-\cup_{i=1}^k C_i|\ne 0$, then $G$ contains $k$ disjoint cycles such that each of the $k$ cycles contains exactly two vertices of $S$.

\section{Preliminaries}
\label{pre}

\subsection{Basic terminology and notation}

Let $G[X,Y]$ be a balanced bipartite graph. 
Let $U$ be a subset of $V(G)$ and $H$ a subgraph of $G$.
We denote by $G[U]$ the subgraph of $G$ induced by $U$, and we define $G-H=G[V(G)\setminus V(H)]$ and $U_{H}=U\cap V(H)$. 
And we call a matching $M$ of $H$ a \textit{$U$-matching} if $X_M\subseteq U$, and a $U$-matching of size $k$ is called a \textit{$k$-$U$-matching}.
Let $G_1$ and $G_2$ be subgraphs of $G$. We denote by $G_1\cup G_2$ the union of $G_1$ and $G_2$, i.e, the subgraph with vertex set $V(G_1)\cup V(G_2)$ and edge set $E(G_1)\cup E(G_2)$. And denote by $E(G_1,G_2)$ the set of edges of $G$ with one end in $V(G_1)$ and the other end in $V(G_2)$, and by $e(G_1,G_2)$ their number. Clearly, $e(G_1,G_2)=\sum_{v\in V(G_1)}d_{G_2}(v)$.

If we write a path $P=v_1v_2\cdots v_m$ or cycle $C=v_1v_2\cdots v_mv_1$, we assume that an orientation of $P$ or $C$ is given such that $v_{i+1}$ is the successor of $v_i$ on $P$ or $C$, and $v_1$ is the successor of $v_m$ on $C$, where $1\le i\le m-1$. Moreover, we use $v_i^+$ and $v_i^-$ to denote the successor and predecessor of $v_i$ on $P$ or $C$ (provided that these vertices exist), respectively. And for any subset $A$ of $V(P)$ or $V(C)$, we define $A^+=\{v^+:v\in A\}$. Furthermore, we use $C[v_i,v_j]$ to represent the path of $C$ from $v_i$ to $v_j$ along the orientation of $C$. Similarly, we define $P[v_i,v_j]$.

%$A^-=\{v^-:v\in A\}$ and  
%and we adopt the notation $C(v_i,v_j]=C[v_i,v_j]-v_i, C[v_i,v_j)=C[v_i,v_j]-v_j$ and $C(v_i,v_j)=C[v_i,v_j]-v_i-v_j$. Similarly, we define $P[v_i,v_j]$, $P(v_i,v_j]$, $P[v_i,v_j)$ and $P(v_i,v_j)$.

A set of disjoint cycles $\{C_1,\ldots, C_k\}$ is called a \textit{minimal system} of $G$ if $C_1,\ldots, C_k$ are feasible and $G$ does not contain disjoint feasible cycles $C_1^\prime, \ldots, C_k^\prime$ such that $\sum_{i=1}^k|C_i^\prime|<\sum_{i=1}^k|C_i|$.
And we call a minimal system $\{C_1,\ldots, C_k\}$ of $G$ a \textit{minimal $S$ system} of $G$ if $G$ does not contain disjoint feasible cycles $C_1^\prime, \ldots, C_k^\prime$ such that $\sum_{i=1}^k|C_i^\prime|=\sum_{i=1}^k|C_i|$ and $\sum_{i=1}^k|S_{C_i^\prime}|<\sum_{i=1}^k|S_{C_i}|$. For simplicity, we use $C$ instead of $\{C\}$.

%Given a bipartite graph $G[X,Y]$ and a subset $S$ of $X$, and we call a matching $M$ of $G$ a $S$-matching if $X_M\subseteq S$, in particular, if $X_M=S$, we say $M$ is a $S$-perfect matching of $G$.

%If $G[X,Y]$ is a bipartite graph and $S$ is a subset of $X$, we call a cycle $C$ of $G$ is feasible if $|S_C|\geq 2$, and we call a matching $M$ of $G$ a $S$-matching if $X_M\subseteq S$, in particular, if $X_M=S$, we say $M$ is a $S$-perfect matching of $G$.

%%%%%
\subsection{Lemmas}

In the following, $G[X,Y]$ is a balanced bipartite graph of order $2n$ and $S$ is a subset of $X$. Recall that a cycle $C$ is feasible if $|S_C|\geq 2$. First we study the structure of $G[V(C\cup H)]$ and give Lemmas \ref{lemma 1}-\ref{lemma 4}, where $C$ is a cycle of $G$, $H$ is a subgraph in $G-C$ consisting of distinct vertices, or disjoint edges, or a path.

%Lemmas \ref{lemma 1}-\ref{lemma 5}, we shall discuss the structure of $G[V(C\cup H)]$ and $G[V(P)]$, where $C$ is a cycle, $H$ is a subgraph in $G-C$ consisting of distinct vertices, or disjoint edges, or a path, and $P$ is a path of $G$. 

\begin{lem}\label{lemma 1}
Let $C=x_1y_1\cdots x_my_mx_1$ be a cycle of $G$ with $x_1\in X$ and $|C|=2m\ge 6$, and let $x$ and $y$ be two vertices of $G-C$ with $x\in S$ and $y\in Y$. Suppose that $C$ is a minimal system of $G$. Then the following four statements hold.
\begin{itemize}
\setlength{\itemsep}{0pt}
\setlength{\parsep}{0pt}
\setlength{\parskip}{0pt}
  \item [\emph{(1)}]\emph{\cite{Jiang}} $d_C(x)\leq 2$ and $d_C(y)\leq 4$. Moreover, $e(\{x,y\}, C)\le 4$ if $xy\in E$.
  \item [\emph{(2)}] $d_C(y)\leq \frac{1}{2}|C|-1$. In particular, if $d_C(y)= \frac{1}{2}|C|-1$ and $X_C\setminus N_C(y)=\{x_j\}$, then $x_j\in S$, $|S_C|=2$, $|C|\le 8$ and $S_C=\{x_j, x_{j+2}\}$ if $|C|=8$.
  \item [\emph{(3)}] If $e(\{x,y\}, C)\geq \frac{1}{2}|C|+1$, then $xy\notin E$ and $G[V(C)\cup \{x,y\}]$ contains a cycle $C^\prime$ and an edge $xy_i$ such that $xy_i$ is disjoint from $C^\prime$, $|C^\prime|=|C|$ and $|S_{C^\prime}|=|S_C|$.
  \item [\emph{(4)}] $e(\{x,y\}, C)\le \frac{1}{2}|C|$ if $xy\in E$.
\end{itemize}
\end{lem}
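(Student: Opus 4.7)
The overarching plan is to exploit the minimality of $C$: as a minimal system of size one, $C$ admits no feasible cycle of strictly smaller length in $G$. The uniform strategy for parts (2), (3), (4) is to negate each target conclusion and manufacture a feasible cycle of length $<|C|$ using $x$, $y$, and a short arc of $C$, contradicting minimality.

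For part (2), the bound $d_C(y)\leq m-1$ is immediate from part (1) when $m\geq 5$, since $d_C(y)\leq 4\leq m-1$. For $m=3,4$ I would rule out $d_C(y)=m$ directly: given $x_a,x_b\in S_C$ (which exist by $|S_C|\geq 2$), a short cycle through $y$ and the $C$-edges around $\{x_a,x_b\}$ produces a feasible $4$-cycle or $6$-cycle of length $<2m$. For the equality characterization $d_C(y)=m-1$ with $X_C\setminus N_C(y)=\{x_j\}$, the same shortcut constructions force successively: (i) $x_j\in S$ (else two $S$-vertices inside $N_C(y)$ yield a feasible $4$-cycle through $y$); (ii) $|S_C|=2$ (a third $S$-vertex combines with $x_j$ or its $C$-neighbors to create a shortcut); (iii) $|C|\leq 8$ (for $m=5$, any placement $\{x_j,x_k\}\subseteq S_C$ combined with the four $y$-neighbors on $C$ produces a cycle of length $\leq 8<10$); (iv) at $|C|=8$ the surviving pair position is $S_C=\{x_j,x_{j+2}\}$, with the other pair positions eliminated by explicit shortcuts.

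For part (3), I first dispose of $xy\in E$. When $m\geq 4$, part (1) gives $e(\{x,y\},C)\leq 4<m+1$, contradicting the hypothesis. When $m=3$, parts (1) and (2) force $d_C(x)=d_C(y)=2$ and $e=4$; since $|N_C(x)|,|N_C(y)|=2$ inside sets of size $3$, pigeonhole produces $y_a\in N_C(x)$ and $x_c\in N_C(y)$ with $y_ax_c\in E(C)$, and $|S_C|\geq 2$ lets us pick $x_c\in S$, so the $4$-cycle $xyx_cy_ax$ contradicts minimality. With $xy\notin E$ in hand, I construct $C'$ by the direct swap $C'=x_1y_1\cdots x_iyx_{i+1}y_{i+1}\cdots x_my_mx_1$ for an index $i$ with $y_i\in N_C(x)$ and $x_i,x_{i+1}\in N_C(y)$. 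Setting $A^-=\{i:y_i\in N_C(x)\}$ and $P=\{i:x_i,x_{i+1}\in N_C(y)\}$, the bound $|P|\geq 2|N_C(y)|-m$ (from $|P^c|\leq 2(m-|N_C(y)|)$) combined with $|N_C(x)|+|N_C(y)|\geq m+1$ yields $|A^-\cap P|\geq 1$ outside a small extremal regime; in the extremal cases part (2) pins $|C|$ and $S_C$ tightly enough that a short feasible cycle can be written down explicitly (for instance $xy_{j_0-1}x_{j_0}y_{j_0}x$ when $X_C\setminus N_C(y)=\{x_{j_0}\}$ and $N_C(x)=\{y_{j_0-1},y_{j_0}\}$, using $x_{j_0}\in S$ from part (2)), contradicting minimality. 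The identity $|S_{C'}|=|S_C|$ is automatic since $y,y_i\in Y$ and $S\subseteq X$.

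For part (4), part (1) already gives $e\leq 4\leq m$ when $m\geq 4$. Only $m=3$ needs attention: assuming $e=4$ forces $d_C(x)=d_C(y)=2$ by parts (1) and (2), and the pigeonhole argument from the first step of (3) produces a shorter feasible $4$-cycle through $x$ and $y$. The principal obstacle I anticipate is the construction of $C'$ in part (3), where ruling out all the extremal configurations in which the direct swap fails requires the full strength of part (2) applied case by case, together with explicit short-cycle constructions to contradict minimality in each one.
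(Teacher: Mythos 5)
Your proposal is correct and follows essentially the same route as the paper: everything rests on the minimality of $C$ forbidding shorter feasible cycles, the hypotheses together with (1)--(2) force $d_C(x)=2$ and $d_C(y)=\frac{|C|}{2}-1$, and your $C'$ (replacing $y_i$ by $y$, keeping the spare edge $xy_i$) together with the exclusion of the extremal configuration $N_C(x)=\{y_{j_0-1},y_{j_0}\}$ via the feasible quadrilateral $xy_{j_0-1}x_{j_0}y_{j_0}x$ is exactly the paper's construction. The only differences are organizational --- the paper proves (2) uniformly by counting $A=\{x_i\in X_C:|S_{C-x_i}|\ge 2\}$ rather than by cases on small $m$, and deduces (4) directly from (3) --- plus one harmless imprecision: in your step (i) of (2), two $S$-vertices in $N_C(y)$ need not give a feasible $4$-cycle (the shortcut through the shorter arc may be a $6$-cycle), but any feasible cycle shorter than $|C|$ already contradicts minimality, so the argument stands.
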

\begin{proof}
Let $A=\{x_i\in X_C: |S_{C-x_i}|\geq 2\}$ and $B=\{x_i\in X_C: |S_{C-x_i}|\leq 1\}$. Clearly, $|A|+|B|=m$. Since $m\ge 3$ and $|S_C|\ge 2$, we have $|A|\ne 0$, $B\subseteq S_C$ and if $|B|\ne 0$ then $|S_C|=2$. For each vertex $x_i\in A$, we have $d_{\{x_{i-1}, x_{i+1}\}}(y)\le 1$ for otherwise  $yC[x_{i+1},x_{i-1}]y$ is a feasible cycle of order $|C|-2$, so $d_C(y)\leq \frac{1}{2}|C|-\left\lceil \frac{|A|}{2}\right\rceil\le \frac{1}{2}|C|-1$. If $d_C(y)= \frac{1}{2}|C|-1$ and $X_C\setminus N_C(y)=\{x_j\}$, then $A\subseteq \{x_{j-1}, x_{j+1}\}$, thus $x_j\in B\subseteq S_C$, $|S_C|=2$ and $|C|=2m\le 2(2+2)=8$. Clearly, if $|C|=8$, then $A=\{x_{j-1}, x_{j+1}\}$ and $S_C=B=\{x_j, x_{j+2}\}$.
Thus (2) holds.

Now we only need to prove (3). Suppose that $e(\{x,y\}, C)\geq \frac{1}{2}|C|+1$. By (1) and (2), the following hold: $d_C(x)=2$, $d_C(y)=\frac{1}{2}|C|-1$, $|S_C|=2$, $|C|\le 8$ and $yx_j\notin E$ for some $x_j\in S_C$, furthermore, $S_C=\{x_j, x_{j+2}\}$ if $|C|=8$. We may assume that $j=1$ and $S_C=\{x_1,x_3\}$.
Then $d_{\{y_1, y_m\}}(x)\le 1$ for otherwise $xy_mx_1y_1x$ is a feasible quadrilateral. Note that $d_C(x)=2$, we have $d_{Y_C-\{y_1, y_m\}}(x)\ge 1$, assume $xy_q\in E$, where $1<q<m\le 4$. It follows that $G[V(C)\cup \{x,y\}]$ contains a feasible cycle $yC[x_{q+1},x_q]y$ and an edge $xy_q$. Moreover, $xy\notin E$ for otherwise $xyx_3y_qx$ is a feasible quadrilateral. Thus (3) holds.
\end{proof}

\begin{lem}\label{lemma 2}\emph{\cite{Wang bipartite}}
Let $C$ be a quadrilateral of $G$ and $H$ a subgraph of $G-C$. Suppose that $e(H,C)\ge |H|+1$. Then the following four statements hold.
\begin{itemize}
\setlength{\itemsep}{0pt}
\setlength{\parsep}{0pt}
\setlength{\parskip}{0pt}
  \item [\emph{(1)}] If $|X_H|=|Y_H|=1$, then $G[V(C\cup H)]$ contains a quadrilateral $C^\prime$ and an edge $e$ such that $e$ is disjoint from $C^\prime$.
  \item [\emph{(2)}] If $|H|=4$ and $H$ contains two disjoint edges, then $G[V(C\cup H)]$ contains a quadrilateral $C^\prime$ and a path $P^\prime$ of order 4 such that $P^\prime$ is disjoint from $C^\prime$.
  \item [\emph{(3)}] If $|H|=4$ and $H$ is a path such that $e(H,C)\ge 6$, then either $G[V(C\cup H)]$ contains two disjoint quadrilaterals, or $H$ has an endvertex, say $z$, such that $d_C(z)=0$.
  \item [\emph{(4)}] If $|H|\ge 6$ and $H$ is a path, then $G[V(C\cup H)]$ contains two disjoint cycles.
\end{itemize}
\end{lem}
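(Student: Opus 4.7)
The plan is to prove the four parts by combining the hypothesis $e(H,C)\ge |H|+1$ with a case analysis on how the vertices of $H$ attach to the four-cycle $C=x_1y_1x_2y_2x_1$. Since every vertex in $X_H$ has its $C$-neighbours inside $\{y_1,y_2\}$ and every vertex in $Y_H$ inside $\{x_1,x_2\}$, each $H$-vertex has $C$-degree at most $2$; the counting hypothesis then forces enough $H$-vertices to achieve $C$-degree exactly $2$, and this is the mechanism used to reconstruct a new quadrilateral by exchanging a vertex of $C$ for such an $H$-vertex. Moreover $G[V(C)]$ is a $K_{2,2}$, so any pair of $X$-vertices together with any pair of $Y$-vertices among $V(C)\cup V(H)$ whose four cross-edges are all present spans a quadrilateral; this flexibility is used repeatedly.

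For (1), with $|H|=2$ and $e(H,C)\ge 3$, some vertex of $H$ (say the $X$-vertex $x$) has $xy_1,xy_2\in E$ and the remaining edge of the count is of the form $yx_i$ for some $i\in\{1,2\}$; replacing $x_{3-i}$ on $C$ by $x$ gives a quadrilateral $C^\prime$ on $\{x,y_1,x_{3-i},y_2\}$ (using $xy_1$, $xy_2$, and two edges of $C$), and $e=yx_i$ is the disjoint edge. For (2), labelling the two disjoint edges of $H$ as a matching $\{xy,x'y'\}$ on the four $H$-vertices, at least one $H$-vertex has $C$-degree $2$; a short case split identifies an exchange producing a quadrilateral $C^\prime$ on two $H$-vertices and two $C$-vertices so that the remaining four vertices (two from $C$ and two from $H$) can be spliced into the required $P^\prime$ of order four using one surviving matching edge of $H$ and at least one $H$-to-$C$ edge.

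For (3), with $H=h_1h_2h_3h_4$ and $e(H,C)\ge 6$, if some endvertex of $H$ has $C$-degree zero the alternative conclusion holds immediately; otherwise both endvertices connect to $C$, and a case analysis on which sides of $C$ they connect to locates two consecutive $H$-vertices that together with two $C$-vertices form one quadrilateral, while the central edge of $H$ joining the other two $H$-vertices combines with the other two $C$-vertices (via $K_{2,2}$) to yield the second quadrilateral. For (4), since each $H$-vertex has $C$-degree at most $2$, the hypothesis $e(H,C)\ge |H|+1$ forces at least two $H$-vertices to have $C$-degree exactly $2$; one such vertex $u$ together with three $C$-vertices provides a first quadrilateral $C_1$, after which a further pigeonhole on the surviving $H$-to-$C$ edges incident to the single leftover $C$-vertex, together with edges internal to the long leftover piece of $H$, produces a second disjoint cycle.

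The main obstacle lies in (2) and (4). In (2), absorbing a high-$C$-degree vertex of $H$ into $C^\prime$ may delete the specific $C$-vertex needed later to extend a matching edge of $H$ into the required $P^\prime$, so the choice of exchange must be coordinated with the matching of $H$ and with which $H$-to-$C$ edges remain available. In (4), assembling a second disjoint cycle from the long leftover piece of $H$ together with only one surviving $C$-vertex requires a careful distribution-counting of $H$-to-$C$ edges along the path, and the tight case $e(H,C)=|H|+1$ is handled by deriving a contradiction with the degree hypothesis in the extremal configurations rather than by direct construction.
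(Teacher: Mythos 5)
First, a point of comparison: the paper itself does not prove Lemma \ref{lemma 2}; it is quoted verbatim from Wang \cite{Wang bipartite}, so your attempt can only be judged on its own merits. On that basis, parts (1) and (2) of your plan are sound: for (1) the count $e(H,C)\ge 3$ forces exactly the configuration you describe, and for (2) the mechanism you identify (a quadrilateral on one matched pair of $H$ together with one $x$- and one $y$-vertex of $C$, the path $P^\prime$ being the other matched edge of $H$ joined to the remaining $C$-edge by one surviving $H$--$C$ edge, using that $G[V(C)]\cong K_{2,2}$) does close under the case split you promise, since $e(H,C)\ge 5$ always allows the two choices of $C$-vertices to be coordinated.

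Parts (3) and (4), however, have genuine gaps. In (3) you only describe the split in which each quadrilateral consists of one edge of the path $H=h_1h_2h_3h_4$ plus two vertices of $C$; this split is impossible whenever an interior vertex of $H$ misses $C$, e.g.\ $d_C(h_1)=d_C(h_2)=d_C(h_4)=2$ and $d_C(h_3)=0$ (so $e(H,C)=6$ and both endvertices meet $C$), and also when $h_1,h_3$ (or $h_2,h_4$) each have degree one with a common neighbour. In those cases one must use the other split, e.g.\ the quadrilateral $h_2h_3h_4x_ih_2$ together with $h_1$ and the three remaining $C$-vertices, which your case analysis never produces. In (4) the failure is more serious. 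Take $C=x_1y_1x_2y_2x_1$ and $H=h_1\cdots h_6$ with $h_1,h_3,h_5\in X$, and let the $H$--$C$ edges be exactly $h_iy_1,h_iy_2$ for $i\in\{1,3,5\}$ plus $h_2x_1$, so $e(H,C)=7=|H|+1$. Every $C$-degree-two vertex of $H$ is some $h_i\in X$, and the quadrilateral $h_iy_1x_jy_2h_i$ leaves over the single $C$-vertex $x_{3-j}$, which has at most one neighbour in $H$ ($x_1$ only $h_2$, $x_2$ none); hence no second cycle through the leftover $C$-vertex exists, and the leftover pieces of the path contain no cycle, so your construction produces nothing. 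Two disjoint cycles do exist (for instance $h_1h_2x_1y_1h_1$ and $h_3h_4h_5y_2h_3$), but both arise from segments of $H$ closed through $C$, not from your exchange; and your fallback for the tight case --- ``a contradiction with the degree hypothesis'' --- is vacuous, because the lemma has no degree hypothesis beyond $e(H,C)\ge|H|+1$ and the configuration above satisfies every assumption. A correct argument for (4) must instead locate two vertex-disjoint subpaths of $H$, each of which can be closed into a cycle through one or two vertices of $C$, which requires a different counting along the path than the one you outline.
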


%\begin{lem}\label{lemma 3}
%Let $C=x_1y_1x_2y_2x_1$ be a feasible quadrilateral of $G$ with $x_1\in X$. Let $P=u_1u_2\cdots u_p$ be a path of $G-C$ with $|S_P|\ge 3$, $u_1\in S$, and $u_p\notin S$ if $|S_P|=3$.
%Suppose that one of the following conditions holds:
%
%\emph{(1)} $u_p\notin S$ and $e(S_P\cup S_P^+, C)\ge |S_P\cup S_P^+|+1$;
%
%\emph{(2)} $u_p\in S$ and $e(S_{P}\cup S_{P-u_p}^+, C)\geq |S_{P}\cup S_{P-u_p}^+|+1$.
%
%\noindent Then $G[V(C\cup P)]$ is $S$-2-feasible.
%\end{lem}

\begin{lem}\label{lemma 3}
Let $C$ be a feasible quadrilateral of $G$. Let $P=u_1u_2\cdots u_p$ be a path of $G-C$ with $|S_P|\ge 3$, $u_1\in S$, and $u_p\notin S$ if $|S_P|=3$.
Suppose that $e(U,C)\ge |U|+1$, where 
$$ U=
\begin{cases}
S_P\cup S_P^+,   \quad\quad     \mbox{if}~u_p\notin S;\\
S_P\cup S_{P-u_p}^+,  \quad    \mbox{if}~u_p\in S.
\end{cases} $$
Then $G[V(C\cup P)]$ is $S$-2-feasible.
\end{lem}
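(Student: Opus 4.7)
The plan is to use the partial degree hypothesis to locate a vertex in $U$ with exactly two neighbors on $C$, form a feasible quadrilateral there, and then construct a second disjoint feasible cycle from what remains. Since $C$ is feasible with $|X_C|=2$, we have $X_C\subseteq S$, so we may write $C=x_1y_1x_2y_2x_1$ with $x_1,x_2\in S$. Splitting the edges from $U$ to $C$ by their $C$-endpoint gives $e(U,C)=a_1+a_2+b_1+b_2$, where $a_i=|\{u\in S_P:uy_i\in E\}|$ and $b_i=|\{v\in U\cap Y:vx_i\in E\}|$; since $|U|=|S_P|+|U\cap Y|$, the hypothesis $e(U,C)\ge |U|+1$ forces either $a_1+a_2\ge |S_P|+1$ (giving some $u_a\in S_P$ adjacent to both $y_1,y_2$) or $b_1+b_2\ge |U\cap Y|+1$ (giving some $u_a^+\in U\cap Y$ adjacent to both $x_1,x_2$).

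In the first case the quadrilateral $Q_1:=u_ay_1x_1y_2u_a$ is feasible (as $u_a,x_1\in S$), and it remains to exhibit a feasible cycle in $G^\ast:=G[(V(P)\setminus\{u_a\})\cup\{x_2\}]$; here $x_2\in S$ is still present and $|S_P\setminus\{u_a\}|\ge 2$. I would aim to close a subpath $P[u_i,u_j]\subseteq P-u_a$ containing an $S$-vertex into a cycle through $x_2$ via edges $x_2u_i,x_2u_j$; after subtracting the two edges used for $Q_1$ from $e(U,C)$, the bound still yields enough $U$-to-$x_2$ edges to produce the required pair $u_i,u_j$. The second alternative is symmetric, producing $Q_1:=u_a^+x_1y_1x_2u_a^+$ and closing a subpath of $P-u_a^+$ into a cycle through $y_2$.

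The main obstacle is the interior subcase where $u_a$ (or $u_a^+$) is neither an endpoint of $P$ nor one of its second vertices: removing it splits $P$ into two subpaths, and the two chosen neighbors of $x_2$ in $U\cap Y$ might lie on opposite sides of $u_a$, so one must bridge the two subpaths through $x_2$ and still land on an $S$-vertex. To resolve this I would first choose $u_a$ of extreme index (the smallest $a$ with $d_C(u_a)=2$) so that the ``small'' side of $P-u_a$ becomes trivial or carries no $S$-vertex, and otherwise appeal to Lemma~\ref{lemma 2}(1) applied to the edge $u_au_a^+\in E(P)$ to replace $C$ by a different quadrilateral together with a disjoint edge, providing greater flexibility. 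The borderline cases $|S_P|=3$ (with $u_p\notin S$) and $|S_P|=4$ with $u_p\in S$ have few configurations and can be verified directly, which is also where the hypothesis on $u_p$ becomes tight.
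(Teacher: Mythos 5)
Your opening reduction (pigeonhole on $e(U,C)\ge|U|+1$, yielding a vertex of $U$ with two neighbours on one side of $C$) is fine, and so is the observation $X_C\subseteq S$, but the construction of the second cycle does not follow from the hypothesis, and this is a genuine gap rather than a fixable detail. The pigeonhole only guarantees an excess of edges on \emph{one} side of $C$; after you extract the quadrilateral $u_ay_1x_1y_2u_a$, the hypothesis gives you no edge at all from the rest of $P$ to $x_2$, and $P$ may be an induced path, so no cycle of your prescribed shape need exist. Concretely, take $P=u_1u_2\cdots u_7$ with $S_P=\{u_1,u_3,u_5,u_7\}$ (so $u_p\in S$, $|U|=7$), let each of $u_1,u_3,u_5,u_7$ be adjacent to exactly $y_1$ and $y_2$, and let there be no other edges between $V(P)$ and $V(C)$ and no chords of $P$. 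Then $e(U,C)=8=|U|+1$, and the conclusion of the lemma holds (via $u_1u_2u_3y_1u_1$ and $u_5u_6u_7y_2u_5$), but your scheme is stuck: the quadrilateral $u_ay_1x_1y_2u_a$ consumes both $y_1$ and $y_2$, while $x_2$ has no neighbour on $P$, so ``enough $U$-to-$x_2$ edges'' is simply false here. The same failure occurs in your symmetric case (all excess on $\{x_1,x_2\}$, so that $y_2$ may have no neighbour on $P$), and the fallback via Lemma~\ref{lemma 2}(1) does not help: that lemma needs $e(\{u_a,u_a^+\},C)\ge 3$, which is not guaranteed, and it only returns a quadrilateral plus a disjoint \emph{edge}, not a second feasible cycle. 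The underlying problem is that the two cycles cannot be prescribed to be ``a quadrilateral through three vertices of $C$'' plus ``a cycle through the leftover $C$-vertex''; in the example above each of the two cycles must use one vertex of $Y_C$ and a segment of $P$.

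The paper avoids this by not prescribing the shape of either cycle. It contracts $P$ onto the set $U$: adding the virtual edges $u_{i_1}^+u_{i_2},\ldots,u_{i_{s-1}}^+u_{i_s}$ turns $U$ into a path $P^\prime$ with $|P^\prime|\ge 6$ and $e(P^\prime,C)\ge|P^\prime|+1$, so Lemma~\ref{lemma 2}(4) yields two disjoint cycles in $G[V(C\cup P^\prime)]$; expanding each virtual edge back into the corresponding subpath of $P$ gives two disjoint cycles of $G[V(C\cup P)]$, and feasibility is automatic because every $X$-vertex of $C\cup P^\prime$ lies in $S$ (here the hypotheses $u_1\in S$, and $u_p\notin S$ when $|S_P|=3$, are exactly what make $|P^\prime|\ge 6$). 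If you want to salvage a direct case analysis you would have to allow both cycles to intersect $C$, which essentially amounts to re-proving Lemma~\ref{lemma 2}(4).
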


\begin{proof}
 Let $|S_P|=s$ and $S_P=\{u_{i_1}, \ldots, u_{i_s}\}$ with $1=i_1<\cdots <i_s\le p$. Let $G^\prime=G\cup \{u_{i_1}^+u_{i_2}, \ldots, u_{i_{s-1}}^+u_{i_s}\}$, and let $P^\prime=u_{i_1}u_{i_1}^+u_{i_2}\cdots u_{i_{s-1}}^+u_{i_s}u_{i_s}^+$ if $u_p\notin S$ and $P^\prime=u_{i_1}u_{i_1}^+u_{i_2}\cdots u_{i_{s-1}}^+u_{i_s}$ if $u_p\in S$. Then $|P^\prime|\ge 6$ and $e(P^\prime,C)\geq  |P^\prime|+1$. By Lemma \ref{lemma 2} (4), $G[V(C\cup P^\prime)]$ contains two disjoint cycles $C_1$ and $C_2$, which implies that we readily obtain two disjoint feasible cycles of $G[V(C\cup P)]$ from $C_1$ and $C_2$ by replacing the edges $u_{i_1}^+u_{i_2}, \ldots, u_{i_{s-1}}^+u_{i_s}$ with the paths $P[u_{i_1}^+, u_{i_2}]$, \ldots, $P[u_{i_{s-1}}^+, u_{i_s}]$, respectively. Thus $G[V(C\cup P)]$ is $S$-2-feasible.
\end{proof}

\begin{lem}\label{lemma 4}
Let $C$ be a cycle of $G$ and $P=P[x, y]$ a path of even order in $G-C$.

\emph{(1) \cite{Wang lem}}  If $e(\{x,y\},C)\ge \frac{|C|}{2}+1$, then $G[V(C\cup P)]$ is hamiltonian.

\emph{(2) \cite{Bondy1976}} If $|P|\ge 4$ and $e(\{x, y\}, P)\ge \frac{|P|}{2}+1$, then $G[V(P)]$ is hamiltonian.
\end{lem}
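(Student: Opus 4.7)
Both parts of Lemma \ref{lemma 4} should follow from a standard P\'osa-type crossing (rotation) argument, with the bipartition handling most of the counting. The plan is the same in each case: record the neighbors of $x$ and of $y$ as two index sets, apply a one-line pigeonhole to find a pair of chords that respect the bipartition, and then splice these chords with the given path to close up a Hamilton cycle.

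For part (1), cycles in a bipartite graph have even length, so I write $|C|=2m$ and label $C=x_1y_1x_2y_2\cdots x_my_mx_1$ with $x_i\in X$, $y_i\in Y$. Since $|P|$ is even, its endpoints lie in opposite classes; WLOG $x\in X$ and $y\in Y$. Define
$$I=\{i\in\{1,\dots,m\}:xy_i\in E(G)\},\qquad J=\{j\in\{1,\dots,m\}:yx_j\in E(G)\}.$$
The hypothesis gives $|I|+|J|=e(\{x,y\},C)\ge m+1$, so working cyclically modulo $m$ the sets $I$ and $J-1$ must overlap, producing an index $i$ with $xy_i\in E(G)$ and $yx_{i+1}\in E(G)$. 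Splicing $P$ between these two chords---i.e., traversing $C$ the long way from $y_i$ around to $x_{i+1}$ and closing via $P$---yields a Hamilton cycle of $G[V(C\cup P)]$.

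For part (2), $G[V(P)]$ is bipartite, so a Hamilton cycle can only exist when $|P|$ is even; I write $|P|=2m$ with $m\ge 2$ and $P=u_1u_2\cdots u_{2m}$, $x=u_1\in X$, $y=u_{2m}\in Y$. The bipartition forces $I:=\{i:xu_i\in E(G)\}\subseteq\{2,4,\dots,2m\}$ and $J:=\{j:yu_j\in E(G)\}\subseteq\{1,3,\dots,2m-1\}$, with $|I|+|J|=e(\{x,y\},P)\ge m+1$. Both $I$ and the shift $J+1$ lie in the $m$-element set $\{2,4,\dots,2m\}$, so they intersect: there is an even $i$ with $xu_i\in E(G)$ and $yu_{i-1}\in E(G)$. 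The resulting Hamilton cycle of $G[V(P)]$ is
$$u_1u_2\cdots u_{i-1}\,u_{2m}u_{2m-1}\cdots u_i\,u_1.$$

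The only genuine obstacle is the bipartite bookkeeping: one has to verify that the chords produced---$xy_i,yx_{i+1}$ in part (1), $xu_i,yu_{i-1}$ in part (2)---actually cross the bipartition in the correct direction, and that boundary indices (such as $i=m$ in part (1), where $i+1$ wraps to $1$) are handled consistently. Once this indexing is fixed, each part reduces to the one-line pigeonhole above, so I expect the argument to be short.
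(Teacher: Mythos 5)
Your proof is correct: in part (1) the pigeonhole on $I$ and $J-1$ in $\mathbb{Z}_m$ and the splice $y_i\,x\,P\,y\,x_{i+1}\,C\,y_i$ do give a Hamilton cycle of $G[V(C\cup P)]$, and in part (2) the crossing chords $xu_i$, $yu_{i-1}$ (including the boundary cases $i=2$ and $i=2m$, where the cycle degenerates to $P$ plus the edge $xy$) give a Hamilton cycle of $G[V(P)]$. The paper itself offers no proof of this lemma---it is quoted from Wang and from Bondy--Chv\'atal---and your crossing/pigeonhole argument is essentially the standard proof found in those sources.
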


%\begin{lem}\label{lemma 5}
%Let $p$ be an integer with $p\ge 2$, and $P=x_1y_1\cdots x_py_p$  a path of even order with $x_1, x_p\in S$. Suppose that $G[V(P)]$ does not contain a feasible cycle or a shorter path $P^\prime$ of even order such that $S_{P^\prime}=S_P$. 
%Then $d_P(x_1)=1$, $d_P(x_i)=2$ for each $x_i\in S_P\setminus \{x_1\}$ and $d_P(y_p)\le \min\{2, \frac{|P|}{2}-1\}$.
%\end{lem}

Now we study the structure of $G[V(P)]$, where $P$ is a path of $G$ with some specific properties, and the structure of $G$ if $G$ contains a minimal system or a minimal $S$ system, and give Lemmas \ref{lemma 5}-\ref{lemma 3.2}.

\begin{lem}\label{lemma 5}
Let $P=x_1y_1\cdots x_py_p$ be a path of even order with $x_1, x_p\in S$ and $p\ne 1$. Suppose that $G[V(P)]$ does not contain a feasible cycle or a shorter path $P^\prime$ of even order such that $S_{P^\prime}=S_P$. 
Then $d_P(x_1)=1$, $d_P(x_i)=2$ for each $x_i\in S_P\setminus \{x_1\}$ and $d_P(y_p)\le 2$.
\end{lem}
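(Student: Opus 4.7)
The plan is to prove all three statements by a uniform contradiction template. In each case I would suppose the vertex in question has an ``extra'' neighbor in $V(P)$, i.e., a chord of $G[V(P)]$, then close the chord with a segment of $P$ to form a cycle $C$. Either $|S_C|\ge 2$, making $C$ a feasible cycle of $G[V(P)]$ and contradicting the hypothesis outright, or $|S_C|\le 1$, in which case $S_C$ is pinned down (to $\{x_1\}$, $\{x_i\}$, or $\{x_p\}$ according to the subcase) and every other $X$-vertex of $C$ is forced outside $S$. In the second branch, using the chord to bypass those non-$S$ vertices yields a strictly shorter even-order path $P'\subseteq G[V(P)]$ with $S_{P'}=S_P$, again contradicting the hypothesis.

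For $d_P(x_1)=1$, I would assume a chord $x_1y_j$ with $j\ge 2$, use $C=x_1y_1x_2\cdots x_jy_jx_1$, and in the non-feasible branch take $P'=x_1y_jx_{j+1}\cdots x_py_p$ (of $2(p-j+1)<2p$ vertices, with $S_{P'}=S_P$ since $x_2,\ldots,x_j\notin S$). The subcase $j=p$ is closed directly by feasibility, because $x_p\in S$ would then force $|S_C|\ge 2$. For $d_P(x_i)=2$ with $x_i\in S_P\setminus\{x_1\}$, I would assume a chord $x_iy_k$ with $k\notin\{i-1,i\}$ and split into $k\le i-2$ and $k\ge i+1$: the resulting cycles are $x_iy_kx_{k+1}\cdots y_{i-1}x_i$ and $x_iy_ix_{i+1}\cdots y_kx_i$ respectively, with corresponding shortcuts $P'=x_1\cdots x_ky_kx_iy_i\cdots x_py_p$ and $P'=x_1\cdots y_{i-1}x_iy_kx_{k+1}\cdots x_py_p$. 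Endpoint boundary cases (e.g.\ when the chord reaches $y_p$) are again handled by $x_p\in S$ automatically making the cycle feasible.

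For $d_P(y_p)\le 2$, since $x_p$ is the only neighbor of $y_p$ on $P$, it suffices to rule out chords $y_px_a$ with $a\le p-2$. Given such a chord, the cycle $C=x_ay_ax_{a+1}\cdots x_py_px_a$ already contains $x_p\in S$, so non-feasibility forces $x_a,\ldots,x_{p-1}\notin S$; then $P'=x_1y_1\cdots x_ay_px_py_{p-1}$ is an even-order path of $2a+2\le 2p-2$ vertices with $S_{P'}=S_P$, a contradiction. Hence any chord at $y_p$ must reach $x_{p-1}$, giving $d_P(y_p)\le 2$. The one delicate point throughout is the strict length decrease of each shortcut: bipartiteness and strict alternation along $P$ make even order automatic, but one must check case by case that the shortcut is actually shorter, and this is precisely where the boundary $a=p-1$ fails at $y_p$, explaining why the conclusion there is ``$\le 2$'' rather than ``$=1$''.
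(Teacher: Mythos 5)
Your proposal is correct and follows essentially the same route as the paper: for each alleged extra chord you form the corresponding cycle through a segment of $P$, and either it is feasible or its unique $S$-vertex forces the bypassed $X$-vertices out of $S$, yielding a strictly shorter even-order path with the same $S$-set — exactly the paper's dichotomy, including the observation that the chord $y_px_{p-1}$ is the one boundary case that survives, which is why only $d_P(y_p)\le 2$ can be concluded.
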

\begin{proof}
First suppose that $d_P(x_1)>1$ or $d_P(x_i)>2$ for some $x_i\in S_P\setminus \{x_1\}$. Then there exists an edge $x_iy_j\in E(G[V(P)]-P)$, where $x_i\in S_P$, $y_j\in Y_P$ and $j\notin \{i-1, i\}$. If $j<i-1$, then $G[V(P)]$ contains a feasible cycle $P[y_j, x_i]y_j$ if $|S_{P[y_j, x_i]}|\ge 2$ and $G[V(P)]$ contains a shorter path $P[x_1, y_j]P[x_i, y_p]$ if $|S_{P[y_j, x_i]}|=1$, a contradiction. Similarly, we can get a contradiction if $j> i$. 

%If the subpath between $y_j$ and $x_i$ along $P$ contains at least two vertices of $S$, then $G[V(P)]$ contains a feasible cycle $P[y_j, x_i]y_j$ if $j<i$ or $P[x_i, y_j]x_i$ if $j>i$, a contradiction. Otherwise, we can replace the subpath between $y_j$ and $x_i$ with the edge $y_jx_i$ to get a shorter path $P[x_1, y_j]P[x_i, y_p]$ if $j<i$ or $P[x_1, x_i]P[y_j, y_p]$ if $j>i$, a contradiction again. Thus $d_P(x_1)=1$ and for each $x_i\in S_P\setminus \{x_1\}$ we have $d_P(x_i)=2$.

%Now suppose that $d_P(y_p)\ge \min\{3, \frac{|P|}{2}\}$. Clearly, $d_P(y_p)\ge 3$, for otherwise $G[V(P)]$ is hamiltonian. Let $x_i, x_j\in N_P(y_p)$ with $i<j<p$. Then $G[V(P)]$ contains a feasible cycle $P[x_i, y_p]x_i$ if $|S_{P[x_i, y_p]}|\ge 2$, or a shorter path $P[x_1, x_i]y_px_py_{p-1}$ if $|S_{P[x_i, y_p]}|=1$, a contradiction. 

Now suppose that $d_P(y_p)\ge 3$. Let $x_i, x_j\in N_P(y_p)$ with $i<j<p$. Then $G[V(P)]$ contains a feasible cycle $P[x_i, y_p]x_i$ if $|S_{P[x_i, y_p]}|\ge 2$, or a shorter path $P[x_1, x_i]y_px_py_{p-1}$ if $|S_{P[x_i, y_p]}|=1$, a contradiction. 
\end{proof}

By the similar proof of Lemma \ref{lemma 5}, if $C$ is a minimal system of $G$ and $|S_C|\ge 3$, then $d_{G[V(C)]}(u)=2$ for each $u\in S_{C}\cup S_C^+$.

Now we use Lemma \ref{lemma 6} to prove Lemma \ref{lemma 3.1}. Actually, Lemma \ref{lemma 3.1} can be deduced directly from the proof of Claim 3.1 in \cite{Jiang}, here we give the proof for completeness.

\begin{lem}\label{lemma 6}\emph{\cite{Jiang}}
Let $t$ and $s$ be two integers with $t\geq s\geq 2$ and $t\geq 3$. Let $\{C_1, C_2\}$ be a minimal system of $G$ such that $|S_{C_1}|=t$ and $|S_{C_2}|=s$. Then $e(S_{C_1}\cup S_{C_1}^+, C_2)\leq \frac{1}{2}t|C_2|$.
\end{lem}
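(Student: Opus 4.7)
My plan is a proof by contradiction. Suppose for contradiction that $e(S_{C_1} \cup S_{C_1}^+, C_2) > \tfrac{1}{2}t|C_2|$. Writing $y_i := x_i^+$ along $C_1$ for each $x_i \in S_{C_1}$, I decompose this sum as $\sum_{x_i \in S_{C_1}}\bigl(d_{C_2}(x_i)+d_{C_2}(y_i)\bigr)$. Averaging over the $t$ pairs, and using that $|C_2|$ is even, some index $i$ must satisfy $d_{C_2}(x_i) + d_{C_2}(y_i) \geq \tfrac{1}{2}|C_2| + 1$.

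Next I would locate a quadrilateral straddling $C_1$ and $C_2$. Since $N_{C_2}(x_i) \subseteq Y_{C_2}$ and $N_{C_2}(y_i) \subseteq X_{C_2}$ both sit in sets of size $\tfrac{1}{2}|C_2|$, and the successor map $y \mapsto y^+$ is a bijection from $Y_{C_2}$ to $X_{C_2}$, a pigeonhole argument produces $y \in Y_{C_2}$ with $x_iy \in E$ and $y_iy^+ \in E$. Setting $x := y^+$ yields a quadrilateral $Q = x_iyxy_ix_i$ using the $C_1$-edge $x_iy_i$ and the $C_2$-edge $yx$.

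With $Q$ in hand I would try to replace $\{C_1, C_2\}$ by disjoint feasible cycles $\{C_1', C_2'\}$ with smaller total length, contradicting the minimality of $\{C_1,C_2\}$. The natural candidates close the paths $C_2 - \{y, x\}$ and $C_1 - \{x_i, y_i\}$ via the chords $x^+y^-$ and $y_{i-1}x_{i+1}$ respectively, giving cycles of lengths $|C_2|-2$ and $|C_1|-2$ and total $|C_1| + |C_2| - 4$. Feasibility is preserved because $t \geq 3$ forces $|S_{C_1'}| \geq t - 1 \geq 2$, and $s \geq 2$ handles $C_2'$, provided the pigeonhole intersection is chosen with $x \notin S_{C_2}$ in the boundary case $s = 2$.

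The main obstacle is that the two chords $x^+y^-$ and $y_{i-1}x_{i+1}$ need not exist in $G$. To handle this I plan to exploit the surplus in the pigeonhole: the intersection of the two subsets of $Y_{C_2}$ used above has size at least one but typically more, offering several candidate quadrilaterals. When no single pair $(y,x)$ delivers both chords, the fallback is to absorb a longer segment of $C_1$---such as $P' = y_{i-1}x_iy_ix_{i+1}$---into $V(C_2)$ via Lemma \ref{lemma 4}(1) or Lemma \ref{lemma 2}, which transfers four rather than two vertices in a single step and bypasses the missing chord. The hypotheses $t \geq s \geq 2$ and $t \geq 3$ are exactly what keep every candidate $C_1'$ or $C_2'$ feasible throughout this case analysis.
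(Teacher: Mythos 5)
The paper itself gives no proof of Lemma~\ref{lemma 6} (it is imported from \cite{Jiang}), so I can only judge your plan on its own merits, and it has a genuine gap right after the averaging step. From the contradiction hypothesis you keep only the statement ``some pair $(x_i,y_i)$ with $y_i=x_i^+$ satisfies $d_{C_2}(x_i)+d_{C_2}(y_i)\ge \frac{|C_2|}{2}+1$,'' and every later move (the straddling quadrilateral $Q$, the attempted exchanges) uses only this single pair together with minimality. But that retained configuration is consistent with all the hypotheses, so no contradiction can be extracted from it. Concretely, let $C_1$ be an $8$-cycle $u_1w_1u_2w_2u_3w_3u_4w_4u_1$ with $S_{C_1}=\{u_1,u_2,u_3\}$, let $C_2=a_1b_1a_2b_2a_3b_3a_1$ with $S_{C_2}=\{a_1,a_2\}$, and add exactly the edges $u_1b_1,u_1b_2,u_1b_3$ and $w_1a_1$. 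Every cycle other than $C_2$ passes through $u_1$, so $\{C_1,C_2\}$ is the unique pair of disjoint feasible cycles and hence a minimal system with $t=3\ge s=2$; yet $d_{C_2}(u_1)+d_{C_2}(w_1)=4=\frac{|C_2|}{2}+1$ and your quadrilateral $Q=u_1b_3a_1w_1u_1$ exists. So the single-pair surplus plus minimality simply cannot force anything; a correct proof must exploit the excess summed over all $t$ pairs (for instance by locating an edge of $C_2$ that receives strictly more than $t$ edges from the $t$ pairs), which your plan discards.

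The specific repairs you propose also fail individually. The chords $x^+y^-$ and $x_i^-y_i^+$ needed to close $C_2-\{y,x\}$ and $C_1-\{x_i,y_i\}$ are not only ``not guaranteed'': minimality typically forbids the $C_2$-chord outright (it would give a shorter feasible cycle disjoint from $C_1$ whenever $x\notin S$ or $s\ge 3$), and nothing in the pigeonhole surplus produces chords inside $C_1$ or $C_2$ --- it only produces edges from $\{x_i,y_i\}$ to $C_2$. The fallback of absorbing a segment such as $x_i^-x_iy_ix_i^{++}$ into $C_2$ via Lemma~\ref{lemma 4}(1) or Lemma~\ref{lemma 2} goes in the wrong direction for a length contradiction: it enlarges $C_2$ and leaves the remnant of $C_1$ as a path, and by minimality $G[V(C_1)\setminus\{x_i,y_i\}]$ contains no feasible cycle at all, so there is no second disjoint feasible cycle and the total length does not decrease (moreover Lemma~\ref{lemma 2} only applies when the absorbing cycle is a quadrilateral). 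As written, the argument cannot be completed.
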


\begin{lem}\label{lemma 3.1}
Let $k$ be a positive integer, and let $\{C_1, \ldots , C_k\}$ be a minimal $S$ system of $G$.
If $\sigma_{1,1}(S)\ge n+1$ and $|G-\cup_{i=1}^k C_i|\ne 0$, then $|S_{C_i}|=2$ for each $i\in \{1,\ldots,k\}$.
\end{lem}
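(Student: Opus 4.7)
The plan is proof by contradiction: assume, after relabelling if necessary, that $|S_{C_1}|\geq 3$, and aim to produce in $G-\bigcup_{j\geq 2}C_j$ either a feasible cycle of length strictly less than $|C_1|$ (violating length-minimality of the $S$ system) or a feasible cycle of the same length as $C_1$ but with strictly fewer vertices of $S$ (violating $S$-minimality). Throughout, each $C_i$ can be regarded as a minimal system of $G-\bigcup_{j\neq i}C_j$, so Lemma~\ref{lemma 1} and the remark following Lemma~\ref{lemma 5} are available.

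First I would show that $C_1$ has no chord. A chord $uv$ would split $C_1$ into two strictly shorter cycles $C_a,C_b$ sharing $u,v$ with $|S_{C_a}|+|S_{C_b}|=|S_{C_1}|+|\{u,v\}\cap S|\geq 3$; pigeonhole gives one of them feasible, and it is shorter than $C_1$ and disjoint from $C_2,\dots,C_k$, contradicting length-minimality. This is the cycle-version of the argument behind the remark after Lemma~\ref{lemma 5}, strengthened from $S_{C_1}\cup S_{C_1}^+$ to all of $V(C_1)$.

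Next, I pick $y\in Y_R$, which exists because $|X_R|=|Y_R|=n-\sum_i m_i\geq 1$. Revisiting the proof of Lemma~\ref{lemma 1}(2) with $|S_{C_1}|\geq 3$ forces $A=X_{C_1}$, giving $d_{C_1}(y)\leq\lfloor m_1/2\rfloor$ together with the constraint that no two neighbours of $y$ lie at cyclic distance $2$ on $C_1$. For any pair $x_a,x_b\in N_{C_1}(y)$ of cyclic distance $d\in\{2,\dots,m_1-2\}$, the two sub-cycles $yC_1[x_a,x_b]y$ and $yC_1[x_b,x_a]y$ are both strictly shorter than $|C_1|$ and their $S$-intersections sum to at least $|S_{C_1}|\geq 3$, so one is feasible---again contradicting length-minimality. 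The boundary distances $d\in\{1,m_1-1\}$ are handled by noting that if both $x_a,x_b\in S$ then $yC_1[x_a,x_b]y$ (or its counterpart) is a feasible quadrilateral shorter than $C_1$. These restrictions force $N_{C_1}(y)$ to be very small and to avoid consecutive $S$-positions.

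Finally, I select $x\in S_{C_1}$ with $xy\notin E$---existence follows either from $d_{C_1}(y)\leq\lfloor m_1/2\rfloor<|S_{C_1}|$ in the generic case, or from the structural restrictions on $N_{C_1}(y)$ in the boundary cases---and invoke $d(x)+d(y)\geq n+1$. The bounds $d_{C_1}(x)=2$ (chordlessness), $d_{C_j}(x)\leq 2$ and $d_{C_j}(y)\leq m_j-1$ for $j\geq 2$ (from Lemma~\ref{lemma 1}), $d_{C_1}(y)\leq\lfloor m_1/2\rfloor$, and $d_R(x)+d_R(y)\leq 2|X_R|$ are assembled via $\sum_i m_i+|X_R|=n$ to upper-bound $d(x)+d(y)$. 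The main obstacle is closing this bound strictly below $n+1$ in the tight regime where $m_1$ is small or $k$ and $|X_R|$ are large; there I expect either to sum the degree condition over several non-adjacent pairs $(x',y)$ with $x'\in S_{C_1}$, exploiting the severe restrictions on $N_{C_1}(y)$, or to invoke Lemma~\ref{lemma 1}(3) to rotate $y$ into some $C_j$ and iterate the argument with the newly-released $y'\in Y_R$, forcing a descent that must terminate in a concrete contradiction to minimality.
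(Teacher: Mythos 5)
Your opening moves are sound: with $|S_{C_1}|\ge 3$, length-minimality does force $C_1$ to be chordless, and your rotation argument correctly restricts $N_{C_1}(y)$ for $y\in Y_H$ (where $H=G-\cup_{i}C_i$) to at most two consecutive vertices of $X_{C_1}$, not both in $S$. But the proof is not closed, and the gap sits exactly at the heart of the lemma: the degree count. Writing $m_i=\frac{|C_i|}{2}$, your stated bounds ($d_{C_1}(x)=2$, $d_{C_j}(x)\le 2$, $d_{C_j}(y)\le m_j-1$ for $j\ge 2$, $d_{C_1}(y)\le\lfloor m_1/2\rfloor$, $d_H(x)+d_H(y)\le |H|$) together with $d(x)+d(y)\ge n+1$ only force roughly $\lceil m_1/2\rceil\le k+|X_H|$, which is no contradiction; the ``tight regime'' you flag is the generic situation, not a boundary case. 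Neither proposed repair is worked out, and neither obviously works: summing the condition over several pairs $(x',y)$ with the same $y$ gains nothing on the terms $d_{C_j}(x')\le 2$ and $d_{C_j}(y)\le m_j-1$ ($j\ge 2$) and $d_H(x')\le |Y_H|$, which is where all the slack is, unless you first prove a disjointness statement for the neighbourhoods of distinct vertices of $S_{C_1}$ (you never do); and the ``rotate $y$ into $C_j$ via Lemma \ref{lemma 1}(3) and descend'' step has no specified decreasing quantity. Note also that $d_{C_j}(y)\le m_j-1$ is not available when $C_j$ is a quadrilateral (Lemma \ref{lemma 1} assumes $|C|\ge 6$, and $y$ may indeed see both vertices of $X_{C_j}$), so your count is even weaker than claimed.

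For comparison, the paper closes the count with three ingredients absent from your sketch, and this is why its argument succeeds where yours stalls. First, the degree condition is applied to $t=|S_{C_1}|$ non-adjacent pairs lying entirely on $C_1$ (each $u_{i_j}\in S_{C_1}$ paired with the successor of the next $S$-vertex), rather than to pairs involving a vertex of $H$. Second, it uses the full minimal $S$ system property (not just length-minimality, which is all you use after your first step) to show that distinct vertices of $S_{C_1}$, and distinct successors, have pairwise disjoint neighbourhoods in $H$; the cycles produced there can have the same length as $C_1$ but fewer $S$-vertices, so the secondary minimization is essential. This yields $e(S_{C_1}\cup S_{C_1}^+,H)\le |H|$, a saving by a factor of about $t/2$ over per-pair bounds. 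Third, Lemma \ref{lemma 6} gives $e(S_{C_1}\cup S_{C_1}^+,C_j)\le\frac{t}{2}|C_j|$ for $j\ge 2$, recovering exactly the $t(k-1)$ edges that crude per-vertex bounds concede. Without analogues of the last two facts, your pairing of $S_{C_1}$ with a vertex of $Y_H$ cannot produce a contradiction, because all of your upper bounds are simultaneously consistent with $\sigma_{1,1}(S)\ge n+1$.
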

\begin{proof}
On the contrary, suppose that Lemma \ref{lemma 3.1} fails. We may assume that $t=|S_{C_1}|=\max\{|S_{C_i}|: i\in\{1,\ldots, k\}\}$. Then $t\geq 3$. Let $H=G-\cup_{i=1}^kC_i$, $C_1=u_1v_1\cdots u_lv_lu_1$ and $S_{C_1}=\{u_{i_1}, \ldots ,u_{i_{t}}\}$, where $l=\frac{|C_1|}{2}$ and $1\le i_1<\cdots <i_{t}\le l$.
First we claim that
\begin{equation}\label{eqlemma3.1}
\mbox{\ for each\ } p, q \mbox{\ with\ } 1\le p<q\le t, |N_H(u_{i_p})\cap N_H(u_{i_q})|=|N_H(v_{i_p})\cap N_H(v_{i_q})|= 0.
\end{equation}
To prove (\ref{eqlemma3.1}), suppose that there exist integers $p, q$ and a vertex, say $a$, such that $a\in N_H(u_{i_p})\cap N_H(u_{i_q})$ or $a\in N_H(v_{i_p})\cap N_H(v_{i_q})$. Note that $|S_{C_1}|\ge 3$, we may assume $|S_{C_1[u_{i_q}^+, u_{i_p}^-]}|\ge 1$. Then $G[V(C_1\cup H)]$ contains a feasible cycle $C$ such that $|C|<|C_1|$ or $|S_C|<|S_{C_1}|$ if $|C|=|C_1|$, where 
$$ C=
\begin{cases}
aC_1[u_{i_p}, u_{i_q}]a,   \quad  \mbox{if}~a\in Y;\\
aC_1[v_{i_q}, v_{i_p}]a,  \quad \;   \mbox{if}~a\in X\setminus S;\\
aC_1[v_{i_p}, v_{i_q}]a,  \quad\;    \mbox{if}~a\in S,
\end{cases} $$
this contradicts $C_1$ is a minimal $S$ system of $G-\cup_{i=2}^kC_k$.

Let $L=S_{C_1}\cup S_{C_1}^+$. Then by (\ref{eqlemma3.1}), $e(L, H)\le |H|$. Since $t\geq 3$ and $\{C_1, \ldots , C_k\}$ is a minimal system of $G$, $d_{G[V(C_1)]}(u)=2$ for each $u\in L$ and $u_{i_t}v_{i_{1}}, u_{i_{j}}v_{i_{j+1}}\notin E$ for $1\leq j\leq t-1$, and by Lemma \ref{lemma 6} we have $e(L, \cup_{i=2}^kC_i)\le \frac{t}{2}\sum_{i=2}^k |C_i|$. It follows that 
\begin{equation*}
\mbox{\ $t(n+1)\le e(L, G)\le |H|+\frac{t}{2}\sum_{i=2}^k |C_i|+4t=t(n+1)-\frac{|H|}{2}(t-2)-t(\frac{|C_1|}{2}-3)< t(n+1)$, \ } 
\end{equation*}
where the last inequality holds because $|H|\ne 0$ and $t\ge 3$, a contradiction.
\end{proof}

\begin{lem}\label{lemma 3.2}
Let $k$ be a nonnegative integer, and let $\{C_1, \ldots , C_k\}$ be a minimal system of $G$. Then the following hold:

\emph{(1)} If $\sigma_{1,1}(S)\ge n$, then $G$ contains a minimal system $\{C_1^\prime, \ldots , C_k^\prime\}$ such that $|C_i^\prime|=|C_i|$, $|S_{C_i^\prime}|=|S_{C_i}|$ for each $i\in \{1,\ldots,k\}$, and $H^\prime$ has a $|S_{H^\prime}|$-$S$-matching, where $H^\prime=G-\cup _{i=1}^k C_i^\prime$.

\emph{(2)} If $\sigma_{1,1}(S)\ge n+1$ and $G-\cup _{i=1}^k C_i$ contains a path $Q=y_1y_2y_3$ with $y_2\in S$, then $G$ is $S$-$(k+1)$-feasible, or $G-Q$ contains a minimal system $\{C_1^\prime, \ldots , C_k^\prime\}$ such that $|C_i^\prime|=|C_i|$, $|S_{C_i^\prime}|=|S_{C_i}|$ for each $i\in \{1,\ldots,k\}$, and $H^{\prime}$ has a $l$-$S$-matching, where  $H^{\prime}=G-Q-\cup _{i=1}^k C_i^{\prime}$ and $l=\min\{|Y_{H^{\prime}}|, |S_{H^{\prime}}|\}$.
\end{lem}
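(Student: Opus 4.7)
Both parts follow from a single optimization-plus-exchange scheme; the base case $k=0$ is Hall's theorem together with the degree condition (if $A \subseteq S$ had $|N_G(A)| < |A|$ then for $y \in Y \setminus N_G(A)$ and any $x \in A$ we would get $d(x)+d(y) \leq (|A|-1)+(n-|A|) = n-1$, contradicting $\sigma_{1,1}(S) \geq n$). So assume $k \geq 1$. For part (1), fix the family $\mathcal{F}$ of all minimal systems $\{C_1', \ldots, C_k'\}$ with $|C_i'| = |C_i|$ and $|S_{C_i'}| = |S_{C_i}|$ for every $i$, and within $\mathcal{F}$ choose a system that maximizes the size of a maximum $S$-matching $M$ in the residue $H' = G - \cup_i C_i'$. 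If $M$ saturates $S_{H'}$ we are done, so suppose otherwise and pick $x_0 \in S_{H'} \setminus X_M$. Let $A \subseteq S_{H'}$ be the set of vertices reachable from $x_0$ by $M$-alternating paths in $H'$ beginning with a non-matching edge, and set $B = N_{H'}(A) \subseteq Y_{H'}$. Maximality of $M$ forces every vertex of $B$ to be $M$-matched with a partner in $A$; hence $|A| = |B| + 1$, every $x \in A$ satisfies $d_{H'}(x) \leq |B|$, and $xy \notin E(G)$ for all $x \in A$, $y \in Y_{H'} \setminus B$.

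Applying $\sigma_{1,1}(S) \geq n$ to each such non-edge and using Lemma 2.1(1) to bound $d_{C_i'}(x) \leq 2$ for $x \in A$, a pigeonhole step concentrates many edges from $A$ (or from some $y \in Y_{H'} \setminus B$) onto a single cycle $C_j'$. I then perform a rotation inside $C_j'$ to produce a replacement $C_j''$ of the same length with $|S_{C_j''}| = |S_{C_j'}|$ that displaces some $y' \in Y_{C_j'} \setminus B$ into the new residue. Once $y'$ lies in the residue, the $M$-alternating tree rooted at $x_0$ can be augmented through $y'$, yielding a strictly larger $S$-matching in the new residue and contradicting the choice of system in $\mathcal{F}$.

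For part (2), apply the same scheme to $G - Q$. Although $G - Q$ is imbalanced with $|X_{G-Q}| - |Y_{G-Q}| = 1$, this is exactly offset by the stronger hypothesis $\sigma_{1,1}(S) \geq n + 1$, so the same counting delivers a matching of size $l = \min\{|Y_{H'}|, |S_{H'}|\}$ in the residue. The one extra case is that the exchange may produce a feasible cycle $C^*$ inside $G - Q - \cup_i C_i'$ disjoint from all $C_i'$ (for instance a quadrilateral $y_1 y_2 y_3 x y_1$ built from a common neighbor $x \in X$ of $y_1$ and $y_3$ via $y_2 \in S$); then $\{C_1', \ldots, C_k', C^*\}$ exhibits $G$ as $S$-$(k+1)$-feasible, which is the first alternative of the conclusion.

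The main obstacle is the rotation step itself — building $C_j''$ with $|C_j''| = |C_j'|$ and $|S_{C_j''}| = |S_{C_j'}|$ while simultaneously freeing a usable $Y$-vertex — and it splits on $|S_{C_j'}|$. When $|S_{C_j'}| = 2$, the four-statement Lemma 2.1 (and Lemma 2.2 when $|C_j'| = 4$) describes how edges between $A$ and $C_j'$ can be reshuffled. When $|S_{C_j'}| \geq 3$, the rigidity remark following Lemma 2.5 — $d_{G[V(C_j')]}(u) = 2$ for every $u \in S_{C_j'} \cup S_{C_j'}^+$ — confines the excess degree to $Y$-vertices of $C_j'$ not adjacent to $S_{C_j'}^+$, permitting a clean consecutive swap of the form $\cdots y_i u_i v_i \cdots \mapsto \cdots y_i v_i u_i \cdots$ that fixes both $|C_j''|$ and $|S_{C_j''}|$.
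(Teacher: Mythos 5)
Your overall extremal set-up (choose a system with the prescribed cycle profile maximizing the residual $S$-matching, then derive a contradiction from a non-edge and an exchange) is the same as the paper's, but the quantitative core of the argument is missing, and the extra machinery you add makes the needed exchange harder rather than easier. The paper works with a single unmatched nonadjacent pair $x\in S_{H_j-M_j}$, $y\in Y_{H_j-M_j}$ and proves the pairwise bound $e(\{x,y\},C_i)\le \tfrac{|C_i|}{2}$ for every cycle: this is exactly Lemma \ref{lemma 1}~(3) for $|C_i|\ge 6$ and Lemma \ref{lemma 2}~(1) for quadrilaterals, because any excess produces a cycle $C_i^\prime$ with $|C_i^\prime|=|C_i|$, $|S_{C_i^\prime}|=|S_{C_i}|$ through $y$ together with an edge $xy_i$ disjoint from it; since both $x$ and $y$ are unmatched, this edge immediately enlarges the matching, contradicting the extremal choice. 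With this bound, together with $e(\{x,y\},M_j)\le |M_j|$, $d_{H_j-M_j}(x)=0$ and $d_{H_j-M_j}(y)\le |X_{H_j-M_j}|-1$, one gets $d(x)+d(y)\le n-1$ (resp.\ $\le n$ in part (2), after splitting off $d_Q(x)\le 1$ or finding the feasible quadrilateral $xy_1y_2y_3x$), contradicting $\sigma_{1,1}(S)\ge n$ (resp.\ $\ge n+1$). You never invoke Lemma \ref{lemma 1}~(3); instead you bound only $d_{C_i^\prime}(x)\le 2$ via Lemma \ref{lemma 1}~(1) and defer everything to an unconstructed ``rotation.'' That rotation is precisely the hard content: you must produce $C_j^{\prime\prime}$ with the same length \emph{and} the same $|S|$-count that frees a $Y$-vertex, and moreover a $Y$-vertex adjacent to your alternating tree (otherwise no augmentation), while not absorbing a matched $Y$-vertex of $B\cup Y_M$ (otherwise $M$ is damaged). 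Your sketch for $|S_{C_j^\prime}|\ge 3$ via the rigidity remark after Lemma \ref{lemma 5} does not do this: that remark only constrains chords inside $C_j^\prime$, says nothing about edges from the residue to $C_j^\prime$, and the proposed ``consecutive swap'' is not a well-defined operation without specifying the adjacencies that make the new cycle exist. So the pigeonhole threshold, the rotation, and the augmentation compatibility are all unproved, and without them the counting does not close.

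A secondary point: the Hall/K\"onig alternating-tree structure ($A$, $B$, $|A|=|B|+1$) is unnecessary here and is the source of the complications above; the paper's argument needs no pigeonhole over $A$ at all, since a single non-edge already yields the numerical contradiction. Also, in part (2) the extra cycle you exhibit must be \emph{feasible}: the common neighbor of $y_1$ and $y_3$ has to lie in $S$ (the paper uses $d_Q(x)=2$ for the unmatched $x\in S$), so ``a common neighbor $x\in X$'' is not sufficient as written.
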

\begin{proof}
Let $H_1=G-\cup _{i=1}^k C_i$, $H_2=H_1-Q$ (if $Q$ exists) and $M_j$ be a $S$-matching of $H_j$, where $j\in \{1, 2\}$. We choose $C_1,\ldots, C_{k}, M_j$ from $G$ or $G-Q$ (if $Q$ exists) such that $\{C_1, \ldots , C_k\}$ is a minimal system and subject to this, $|M_j|$ is maximum for each $j\in \{1, 2\}$. Suppose that $M_1$ does not satisfy (1) or $M_2$ does not satisfy (2). Then for some $j\in \{1, 2\}$ there exist two vertices $x\in S_{H_j-M_j}$ and $y\in Y_{H_j-M_j}$ such that $xy\notin E$.

According to the choice of $C_1, \ldots, C_{k}$ and $M_j$, we have $e(\{x,y\}, M_j)\le \frac{|V(M_j)|}{2}$, furthermore, Lemma \ref{lemma 1} (3) and Lemma \ref{lemma 2} (1) imply that $e(\{x,y\}, \cup _{i=1}^k C_i)\le \sum_{i=1}^k \frac{|C_i|}{2}$.
 If $j=1$, we have $d(x)+d(y)\le \sum_{i=1}^{k}\frac{|C_i|}{2}+\frac{|V(M_1)|}{2}+0+(\frac{|H_1-M_1|}{2}-1)= n-1<n$, a contradiction. Thus (1) holds. Now suppose $j=2$. If $d_Q(x)=2$, then $G$ contains $k+1$ disjoint feasible cycles $C_1, \ldots , C_k$ and $xy_1y_2y_3x$; if $d_Q(x)\le 1$, then $d(x)+d(y)\le \sum_{i=1}^{k}\frac{|C_i|}{2}+\frac{|V(M_2)|}{2}+1+(\frac{|H_1-M_2|}{2}-1)= n<n+1$, a contradiction. Thus (2) holds.
\end{proof}

%%%%%%%
%
%\subsection{Outline of the proof}
%
%Given graph $G$ and set $S$ as in Theorem \ref{theorem 2}. We first assume that $G$ is edge-maximal and $G$ is not $S$-$k$-feasible, so $G$ is $S$-$(k-1)$-feasible and we can choose a minimal $S$ system $C_1, \ldots, C_{k-1}$ from $G$. By Lemma \ref{lemma 3.1}, $H=G-\cup_{i=1}^{k-1}C_i$ contains at least three vertices of $S$. According to the value of $|S_H|$, we divide the proof into Case 1 and Case 2. We can always choose $\{C_1,\ldots, C_{k-1}, H\}$ such that in Case 1 it satisfies the condition in Claim \ref{claim xin2} and in Case 2 it satisfies the condition in Claim \ref{claim xin1}. Thus $G$ is $S$-$k$-feasible, a contradiction.
%
%Given graph $G$ and set $S$ as in Theorem \ref{theorem}. We first show that $G$ is $S$-$s$-cyclable, where $s\ge k$. So $G$ contains $s$ disjoint feasible cycles $C_1, \ldots, C_s$ covering $S$. By study the structure between $C_i$ and $\cup_{j=1}^sC_j-C_i$, we can transform two cycles into one cycle or three cycles into two cycles.
%%%%%%%

\section{Proof of Theorem \ref{theorem 2}}
\label{pftheorem1}

Let $G[X,Y]$ be a balanced bipartite graph of order $2n$, and $S$ a subset of $X$ such that $|S|\geq 2k+1$ and $\sigma_{1,1}(S)\ge n+1$, where $k$ is a positive integer and $\sigma_{1,1}(S)=\min\{d(x)+d(y) : x\in S, y\in Y, xy\notin E\}$. If $G$ is $S$-$k$-feasible, then Theorem \ref{theorem 2} holds by Lemma \ref{lemma 3.1}. So suppose that $G$ is not $S$-$k$-feasible and $G$ is edge-maximal, that is, $G+xy$ is $S$-$k$-feasible for each pair of nonadjacent vertices $x\in X$ and $y\in Y$. Clearly, $G$ is $S$-$(k-1)$-feasible. In the folllowing, our aim is to get a contradiction by showing that $G$ is $S$-$k$-feasible. For this purpose, we describe two good structures in the below two claims, which allow us to transform $s$ disjoint feasible cycles to $s+1$ disjoint feasible cycles. 

%we give the following two claims and only need to get the following two structures in $G$.
%
%
%
%The following two claims describe two good structures of $G$. 

%We will get a contradiction by showing that $G$ contains these two structures.

\begin{clm}\label{claim xin1}
Let $\{D_1, \ldots, D_s\}$ be a minimal system of $G$. Suppose that $G-\cup_{i=1}^{s}D_i$ contains a path $P$ such that $|S_P|=3$ and $|P|$ is even. Then $G$ is $S$-$(s+1)$-feasible.
\end{clm}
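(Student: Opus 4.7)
The plan is to invoke Lemma~\ref{lemma 3.2}(2) on the system $\{D_1,\ldots,D_s\}$ together with a carefully chosen $3$-vertex sub-path of $P$. Since $|S_P|=3$ and $P$ has only two endpoints, at least one $S$-vertex $u$ of $P$ lies interior to $P$; let $y_1,y_3\in Y$ denote its two $P$-neighbours. Then $Q:=y_1uy_3$ satisfies the hypothesis of Lemma~\ref{lemma 3.2}(2), since $Q\subseteq P\subseteq G-\bigcup_i D_i$ and $u\in S$.

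Applying Lemma~\ref{lemma 3.2}(2), we land in one of two cases: either $G$ is already $S$-$(s+1)$-feasible (which is the desired conclusion), or $G-Q$ admits a minimal system $\{C'_1,\ldots,C'_s\}$ with $|C'_i|=|D_i|$ and $|S_{C'_i}|=|S_{D_i}|$ for every $i$, together with an $l$-$S$-matching $M$ in $H':=G-Q-\bigcup_i C'_i$, where $l=\min\{|Y_{H'}|,|S_{H'}|\}$. In the second case, the task reduces to exhibiting one more feasible cycle disjoint from $C'_1,\ldots,C'_s$. A natural candidate is the quadrilateral $C^*:=y_1uy_3xy_1$, where $x\in S\cap V(H')$ is adjacent to both $y_1$ and $y_3$: such a cycle is feasible (it contains the two $S$-vertices $u$ and $x$) and is disjoint from every $C'_i$. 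Promising candidates for $x$ are the $S$-endpoints of the matching edges of $M$, as well as the two surviving $S$-vertices of $P$ other than $u$ whenever they happen to lie in $V(H')$.

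The main obstacle is ruling out the bad scenario in which no such $x$ works: every $S$-endpoint of $M$, and every surviving $S$-vertex of $P$ that lies in $V(H')$, misses at least one of $y_1,y_3$ as a neighbour. One must also allow for the possibility that the two $S$-vertices of $S_P\setminus\{u\}$ have leaked into the new cycles $C'_i$ rather than staying in $V(H')$. In that bad case the plan is to derive a contradiction by summing $\sigma_{1,1}(S)\ge n+1$ over the resulting non-edges, and then bounding the total edge count using Lemma~\ref{lemma 1} (for the contributions of $\{y_1,y_3\}$ into each individual cycle $C'_i$), Lemma~\ref{lemma 6} (for the combined contribution across the cycles), and the maximality of $M$ guaranteed by Lemma~\ref{lemma 3.2}(2) (to control the non-matched part of $H'$). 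The delicate book-keeping, which tracks where each remaining $S$-vertex of $P$ lands, should yield a bound strictly contradicting $\sigma_{1,1}(S)\ge n+1$, thereby eliminating the bad scenario and completing the proof.
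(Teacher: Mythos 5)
Your opening reduction is fine: an interior $S$-vertex $u$ of $P$ with its two neighbours gives a path $Q=y_1uy_3$ to which Lemma \ref{lemma 3.2}(2) applies, and the first outcome of that lemma is exactly the desired conclusion. But the proof stops where the real work begins. In the second outcome you have effectively discarded $P$: Lemma \ref{lemma 3.2}(2) only preserves the sizes $|C_i^\prime|=|D_i|$ and the counts $|S_{C_i^\prime}|=|S_{D_i}|$, so the two $S$-vertices of $P$ other than $u$ may now sit inside the new cycles, and all that survives is the $3$-path $Q$ together with an $S$-matching in $H^\prime$. Ruling out your ``bad scenario'' from this weaker configuration is the entire content of the claim, and your sketch of it --- summing $\sigma_{1,1}(S)\ge n+1$ over the known non-edges and hoping that Lemma \ref{lemma 1}, Lemma \ref{lemma 6} and the maximality of $M$ force a numerical contradiction --- cannot work as stated. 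When the edge excess of the relevant vertices falls on the cycles $C_1^\prime,\ldots,C_s^\prime$ there is no numerical contradiction at all: a situation with, say, $e(L,C_1^\prime)\ge \frac{3|C_1^\prime|}{2}+1$ is perfectly compatible with the degree condition, and what must then be done is to convert that excess into two disjoint feasible cycles by an exchange argument (this is what Lemma \ref{lemma 3} and Lemma \ref{lemma 2}(2),(3) are for, and what the paper's treatment of the closely analogous configuration in Claim \ref{claim xin2} spends a long case analysis on, including a quadrilateral receiving seven edges). Your plan performs no such exchanges, and, having thrown away $P$, it lacks the structure needed to perform them; note also that the bad scenario must exclude not only your quadrilateral $y_1uy_3xy_1$ but every feasible cycle in $H^\prime\cup Q$ (for instance two matching edges joined by two cross edges), which your book-keeping does not address.

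For comparison, the paper keeps the path: it chooses $P$ with $|P|$ minimum, sets $L=S_P\cup S_P^+$, observes that if none of the four closing edges exists then the three non-adjacent pairs $(x_1,y_i),(x_i,y_p),(x_p,y_1)$ give $e(L,G)\ge 3n+3$, and then produces the extra feasible cycle in each of three cases: via Lemma \ref{lemma 1}(4) and Lemma \ref{lemma 3} (splitting some $D_j\cup P$ into two feasible cycles) if the excess sits on the cycles, via Lemma \ref{lemma 5} and the minimality of $|P|$ if it sits on $P$, and via a common-neighbour argument if it sits on $H-P$. The step you leave as ``delicate book-keeping'' is of the same order of difficulty as Claim \ref{claim xin2} itself, so as written the proposal has a genuine gap.
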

\begin{proof}
Let $H=G-\cup_{i=1}^{s}D_i$ and $P=x_1y_1\cdots x_py_p$ with $S_P=\{x_1, x_i, x_p\}$, where $1<i<p$. Choose $P$ so that $|P|$ is minimum. Let $L=S_P\cup S_P^+$, where $S_P^+=\{y_1, y_i, y_p\}$.
If $\{x_1y_i, x_iy_p, x_py_1, x_1y_p\}\cap E\ne \emptyset$, then we are done. Now suppose $x_1y_i, x_iy_p, x_py_1, x_1y_p\notin E$, and so $e(L,G)\ge 3n+3$. It follows that one of the following inequalities holds: 
\begin{equation*}
\mbox{\ (1) $e(L, \cup_{i=1}^{s}D_i)\ge \sum_{i=1}^{s}\frac{3|D_i|}{2}+1$; $\ $ (2) $e(L, P)\ge \frac{3|P|}{2}+2$; $\ $ (3) $e(L, H-P)\ge \frac{3|H-P|}{2}+1$.\ }
\end{equation*}
If (1) holds, then by Lemma \ref{lemma 1} (4) and Lemma \ref{lemma 3} there exists a feasible quadrilateral, say $D_1$, such that $G[V(D_1\cup P)]$ is $S$-2-feasible, thus $G$ is $S$-$(s+1)$-feasible. 
Note that $|P|$ is minimum and $x_1y_i, x_iy_p, x_py_1, x_1y_p\notin E$. 
If (2) holds, then by Lemma \ref{lemma 5} we have $G[V(P)]$ is $S$-1-feasible or $e(L, P)\le 1+2+2+(\frac{3|P|}{2}-4)<\frac{3|P|}{2}+2$, thus $G$ is $S$-$(s+1)$-feasible.
If (3) holds, then $e(S_P, H-P)\ge \frac{|H-P|}{2}+1$ or $e(\{y_1, y_p\}, H-P)\ge \frac{|H-P|}{2}+1$, it follows that $G[V(P)]$ is $S$-1-feasible and so $G$ is $S$-$(s+1)$-feasible.
\end{proof}

\begin{clm}\label{claim xin2}
Let $\{D_1, \ldots, D_s\}$ be a minimal system of $G$. Suppose that $G-\cup_{i=1}^{s}D_i$ contains two disjoint paths $P=x_1\cdots x_{2p}$ and $Q=y_1y_2y_3$ such that $S_{P\cup Q}=\{x_1, x_{2p-1},y_2\}$. Then $G$ is $S$-$(s+1)$-feasible.
\end{clm}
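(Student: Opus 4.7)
The plan is to mirror the proof of Claim \ref{claim xin1}. Set $H = G - \bigcup_{i=1}^{s} D_i$ and
$$L \;=\; S_{P\cup Q} \cup S_{P\cup Q}^{+} \;=\; \{x_1,\, x_{2p-1},\, y_2,\, x_2,\, x_{2p},\, y_3\},$$
where the successors are taken along the given orientations of $P$ and $Q$. The first step is to compile a short list of ``shortcut'' edges in $G$ whose presence immediately yields an extra feasible cycle inside $G[V(P\cup Q)]$: the feasible quadrilaterals $x_1y_1y_2x_2x_1$ (if $\{x_1y_1, y_2x_2\}\subseteq E$), $x_1y_3y_2x_2x_1$, $x_{2p-1}y_1y_2x_{2p}x_{2p-1}$, $x_{2p-1}y_3y_2x_{2p}x_{2p-1}$, and the Hamilton cycle of $P$ obtained when $x_1x_{2p}\in E$. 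I will also record that whenever one of the edges $x_1y_1, x_1y_3, x_{2p-1}y_1, x_{2p-1}y_3$ is present, $P$ and $Q$ can be spliced into a single even-order path $P'\subseteq G-\bigcup_{i=1}^{s} D_i$ with $|S_{P'}|=3$ (for instance $P'=x_{2p-1}x_{2p-2}\cdots x_1y_1y_2y_3$ when $x_1y_1\in E$), and then Claim \ref{claim xin1} applied to $\{D_1,\ldots,D_s\}$ and $P'$ produces the desired $s+1$ disjoint feasible cycles.

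After ruling out every edge in this list, the hypothesis $\sigma_{1,1}(S)\geq n+1$, summed over the nonadjacent pairs in $(L\cap S)\times (L\cap Y)$, gives a lower bound of the form $e(L,G)\geq 3n+c$ for an explicit constant $c$. I then split
$$e(L, G) \;=\; e\Bigl(L, \textstyle\bigcup_{i=1}^{s} D_i\Bigr) + e(L, P\cup Q) + e(L, H - P - Q),$$
and argue by pigeonhole that some summand is large enough to activate a previous tool. If the first summand exceeds $\sum_{i=1}^{s}\tfrac{3}{2}|D_i|$, then Lemma \ref{lemma 1}(4) applied to the three adjacent pairs $(x_1,x_2)$, $(x_{2p-1},x_{2p})$, $(y_2,y_3)$ inside $L$ forces $e(L,D_i)\leq \tfrac{3}{2}|D_i|$ whenever $|D_i|\geq 6$, so some $D_i$ must be a feasible quadrilateral; Lemma \ref{lemma 3}, applied to $D_i$ together with the spliced even-order path of three $S$-vertices, then delivers two disjoint feasible cycles inside $G[V(D_i\cup P\cup Q)]$. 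If the second summand is large, a Lemma \ref{lemma 5}-style case analysis inside $G[V(P\cup Q)]$ produces a feasible cycle there disjoint from all $D_i$. If the third summand is large, some vertex $u$ of $H - P - Q$ has enough neighbors in $L$ to be spliced onto an endpoint of $P$ or $Q$, again yielding an even-order path $P'\subseteq G-\bigcup D_i$ with $|S_{P'}|=3$ to which Claim \ref{claim xin1} applies.

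The main obstacle is the parity mismatch between $P$ (even order) and $Q$ (odd order): their union is not itself a path of even order, so Lemmas \ref{lemma 2} and \ref{lemma 4}, which depend on even-order paths, cannot be invoked on $P\cup Q$ as a whole. Consequently, most sub-cases will proceed either by constructing a suitable even-order subpath of $P\cup Q$, or by splicing $P$ and $Q$ together through one of the four edges $x_1y_1,x_1y_3,x_{2p-1}y_1,x_{2p-1}y_3$ and reducing to the setting of Claim \ref{claim xin1}. The bulk of the routine work will be in bookkeeping which case each ``shortcut'' edge triggers, and verifying that the pigeonhole thresholds on $e(L,\cdot)$ line up with the bound $e(L,G)\geq 3n+c$ coming from $\sigma_{1,1}(S)\geq n+1$.
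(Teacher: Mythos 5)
Your outline reproduces the paper's skeleton (a six-vertex set $L$, a bound $e(L,G)\ge 3n+c$ from $\sigma_{1,1}(S)$, and a pigeonhole split over $\cup_{i=1}^{s}D_i$, $P\cup Q$ and the rest of $H$), but two of its steps do not go through as written. First, with your choice $L=\{x_1,x_{2p-1},y_2,x_2,x_{2p},y_3\}$ the bound $e(L,G)\ge 3(n+1)$ is not available: it requires matching each of $x_1,x_{2p-1},y_2$ with a distinct \emph{nonadjacent} $Y$-vertex of $L$, and $y_2$ can only be matched with $x_2$, $x_{2p}$ or $y_3$. Now $y_2y_3\in E(Q)$, and $y_2x_{2p}\in E$ can indeed be excluded (it splices $P$ and $Q$ into the even path $x_1\cdots x_{2p}y_2y_1$ with three $S$-vertices, so Claim \ref{claim xin1} finishes), but $y_2x_2\in E$ cannot be excluded --- it creates neither a feasible cycle nor an even path with three $S$-vertices --- and then the only remaining matching forces the pair $(x_{2p-1},x_2)$, whose nonadjacency also cannot be guaranteed. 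This is exactly why the paper takes $L=S_{P\cup Q}\cup\{x_{2p},y_1,y_3\}$, pairing $y_2$ with $x_{2p}$ and $x_1,x_{2p-1}$ with $y_1,y_3$, and moreover chooses $P$ with $|P|$ minimum and with $x_2y_2\in E$ if possible; your sketch makes no minimality choice, which is also needed to invoke Lemma \ref{lemma 5} to bound $e(\{x_1,x_{2p-1},x_{2p}\},P)$.

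Second, and more seriously, your treatment of the main case is circular. Once the splicing edges $x_1y_1,x_1y_3,x_{2p-1}y_1,x_{2p-1}y_3$ (and $x_{2p}y_2$) have been ruled out, there is no ``spliced even-order path with three $S$-vertices'' in $G-\cup_{i=1}^{s}D_i$ --- if such a path existed, Claim \ref{claim xin1} would already give the conclusion --- so Lemma \ref{lemma 3}, whose hypothesis concerns a single path containing at least three $S$-vertices, cannot be applied to a quadrilateral $D_i$ ``together with the spliced path''. At that point $P\cup Q$ is disconnected with its $S$-vertices split $2+1$, and no even subpath of it carries three $S$-vertices. Manufacturing the missing connection through a quadrilateral $D_1$ with $e(L,D_1)\ge \tfrac{3}{2}|D_1|+1$ is precisely the substance of the paper's proof: a case analysis showing $e(X_L,D_1)\le 4$ and $d_{D_1}(y)\le 1$ for $y\in Y_L$, hence $e(L,D_1)=7$, and then explicit constructions yielding either two disjoint feasible cycles in $G[V(D_1\cup P\cup Q)]$ or a feasible quadrilateral plus an even path containing all three $S$-vertices (to which (\ref{eqpath3}) and Claim \ref{claim xin1} apply). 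Your proposal offers no substitute for this construction. The same issue affects your third summand: a large $e(L,H-P-Q)$ helps only via a \emph{common} neighbour of suitable ends of $P$ and $Q$ (a vertex adjacent to $x_1$ alone splices nothing), so that case needs the common-neighbour accounting the paper performs rather than ``some vertex with enough neighbours in $L$''.
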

\begin{proof}
Let $H=G-\cup_{i=1}^{s}D_i$.
On the contrary, suppose that $G$ is not $S$-$(s+1)$-feasible. Thus by Claim \ref{claim xin1}, we have for each $i\in \{1, \ldots, s\}$,
\begin{equation}\label{eqpath3}
 \begin{split}
& \mbox{\ $G[V(D_i\cup H)]$  is not $S$-2-feasible and it does not contain a cycle $D_i^\prime$ and a path $P_0$\ }\\
& \mbox{\ such that $|D_i^\prime|=|D_i|$, $|S_{D_i^\prime}|=|S_{D_i}|$, $|S_{P_0}|=3$ and $|P_0|$ is even.\ } 
  \end{split}
\end{equation}
%\begin{equation}\label{eqcycle}
%\mbox{\ and $G[V(D_i\cup H)]$.\ } 
%\end{equation}
Choose $P$ and $Q$ such that $|P|$ is minimum and $x_2y_2\in E$ if possible. Let $H^\prime=H-P-Q$ and $L=S_{P\cup Q}\cup \{x_{2p}, y_1, y_3\}$. 
By (\ref{eqpath3}) and the choice of $P$, we have $e(\{x_1, x_{2p-1}, x_{2p}\}, Q)=0$, $e(S_{P\cup Q}, H^\prime)\le |Y_{H^\prime}|=\frac{|H^\prime|-1}{2}$, $e(\{x_{2p}, y_1\}, H^\prime)\le |X_{H^\prime}|=\frac{|H^\prime|+1}{2}$, and $e(\{x_1, x_{2p-1}, x_{2p}\}, P)\le 1+2+2=5$ by Lemma \ref{lemma 5}.
It follows that $e(L,G)\ge 3n+3$ and $e(L, H)=e(L, H^\prime\cup Q\cup P)\le (\frac{|H^\prime|-1}{2}+2(\frac{|H^\prime|+1}{2}))+(0+4)+(5+(\frac{3|P|}{2}-5))=\frac{3|H|}{2}$.
Hence, $e(L,\cup_{i=1}^{s}D_i)\ge \sum_{i=1}^{s}\frac{3|D_i|}{2}+3$, which implies that there exists a cycle, say $D_1$, such that $e(L,D_1)\ge \frac{3|D_1|}{2}+1$.

First suppose that $|D_1|\ge 6$. Since $\{D_1, \ldots, D_s\}$ is a minimal system of $G$, we know $D_1$ is a minimal system of $G-\cup_{i=2}^s D_i$. Thus by Lemma \ref{lemma 1} we have $e(L, D_1)\le 2+2(\frac{|D_1|}{2})+(\frac{|D_1|}{2}-1)=\frac{3|D_1|}{2}+1$. Recall that $e(L,D_1)\ge \frac{3|D_1|}{2}+1$. It follows that $d_{D_1}(y_1)=d_{D_1}(y_3)=\frac{|D_1|}{2}-1$ and $d_{D_1}(y_2)=1$. Hence, we can easily get a feasible quadrilateral in $G[V(Q\cup D_1)]$ by Lemma \ref{lemma 1} (2), a contradiction.

Thus $|D_1|=4$ and $e(L, D_1)\ge 7$. Let $D_1=u_1u_2u_3u_4u_1$ with $u_1\in X$. Clearly, $e(Y_L, D_1)\ge 1$. Since $y_1$ and $y_3$ are symmetric, we may assume that $x_{2p}u_1\in E$ or $y_1u_1\in E$.

First claim that $e(X_L, D_1)\le 4$. For otherwise, $e(X_L, D_1)\ge 5$, then there exist vertex sets $\{v_2, v_4\}$ and $\{v_2^\prime, v_4^\prime\}$ such that $\{v_2, v_4\}=\{v_2^\prime, v_4^\prime\}=\{u_2, u_4\}$, $x_1v_2, x_{2p-1}v_2, y_2v_4\in E$ and $x_1v_2^\prime$, $y_2v_2^\prime$, $x_{2p-1}v_4^\prime \in E$.
If $y_1u_1\in E$, then $G[V(D_1\cup P\cup Q)]$ contains two disjoint feasible cycles $v_2P[x_1, x_{2p-1}]v_2$ and $y_1y_2v_4u_1y_1$, contradicting (\ref{eqpath3}). If $x_{2p}u_1\in E$, then $G[V(D_1\cup P)]$ contains disjoint subgraphs $D_1^\prime=x_{2p-1}x_{2p}u_1v_4^\prime x_{2p-1}$ and $P^\prime=u_3v_2^\prime x_1x_2$ such that $y_2v_2^\prime\in E$. By the choice of $P$ and $Q$, we have $y_2x_2\in E$. It follows that $G[V(D_1\cup P\cup Q)]$ contains two disjoint feasible cycles $x_{2p-1}x_{2p}u_1v_4^\prime x_{2p-1}$ and $x_1x_2y_2v_2^\prime x_1$, a contradiction again.

Then claim that $d_{D_1}(y)\le 1$ for each $y\in Y_L$.
For otherwise, $d_{D_1}(y)=2$ for some $y\in Y_L$. By (\ref{eqpath3}) ,
%
%
%
%Note that $G$ is not $S$-$(s+1)$-feasible, we have $G[V(D_1\cup H)]$ is not $S$-2-feasible, and does not contain a feasible quadrilateral and a path $P^\prime$ of even order with $|S_{P^\prime}|=3$, 
we have $N_{D_1}(u)\cap N_{D_1}(v)=\emptyset$ for each subset $\{u,v\}\subseteq X_L$, which implies that $e(X_L, D_1)\le 2$ and so $e(Y_L, D_1)\ge 5$. Thus we have $x_{2p}v_1, y_1v_3, y_3v_3\in E$, where $\{v_1, v_3\}=\{u_1, u_3\}$. It follow that $G[V(D_1\cup P\cup Q)]$ contains a feasible quadrilateral $v_3y_1y_2y_3v_3$ and a path $P^\prime=Pv_1u_4$, a contradiction.

So we have $7\le e(L, D_1)\le 4+3\times 1=7$. It follows that $e(X_L, D_1)=4$ and $d_{D_1}(y)=1$ for each $y\in Y_L$.
If $N_{D_1}(y_1)\cap N_{D_1}(y_3)\ne \emptyset$, then $G[V(D_1\cup P\cup Q)]$ contains a feasible quadrilateral $D_1^\prime$ and a path $P^\prime$ disjoint from $D_1^\prime$ such that $|P^\prime|$ is even and $|S_{P^\prime}|=3$ as $e(\{x_1, x_{2p-1}\}, D_1)\ne 0$,  a contradiction. 
Then $N_{D_1}(y_1)\cap N_{D_1}(y_3)= \emptyset$ and assume that $x_{2p}u_1, y_1u_1, y_3u_3\in E$. Recall that $e(X_L, D_1)=4$, we have $d_{D_1}(x_1)=2$ or $d_{D_1}(y_2)\ge 1$ and assume $y_2u_2\in E$. In both cases, $G[V(D_1\cup P\cup Q)]$ contains a feasible quadrilateral $D_1^\prime$ and a path $P^\prime$ of even order with $|S_{P^\prime}|=3$, where $D_1^\prime=x_1u_2u_3u_4x_1$ and $P^\prime=x_{2p-1}x_{2p}u_1y_1y_2y_3$, or $D_1^\prime=y_2y_3u_3u_2y_2$ and $P^\prime=Pu_1y_1$, a contradiction.
\end{proof}

Recall that $G$ is $S$-$(k-1)$-feasible, let $\{C_1, \ldots, C_{k-1}\}$ be a minimal $S$ system of $G$. Then by Lemma \ref{lemma 3.1} we have $|S_{C_i}|=2$ for each $i\in \{1,\ldots, k-1\}$. Let $H=G-\cup_{i=1}^{k-1}C_i$. Then $|S_H|\ge 3$ as $|S|\ge 2k+1$. Now we divide the proof of Theorem \ref{theorem 2} into the following two cases.

\noindent\textbf{Case 1.} $|S_H|\ge 4$ or $|X_H|>|S_H|=3$.

Choose $\{C_1, \ldots, C_{k-1}, H\}$ such that $\{C_1, \ldots, C_{k-1}\}$ is a minimal $S$ system of $G$ and $\max\{d_H(u): u\in S_H\}$ is maximum.
First suppose that $\max\{d_H(u): u\in S_H\}\ge 2$. Let $y_2$ be the vertex in $S_H$ such that $d_H(y_2)\ge 2$ and $y_1, y_3\in N_H(y_2)$. Let $Q=y_1y_2y_3$. Since $G$ is not $S$-$k$-feasible, by Lemma \ref{lemma 3.2} (2), $G-Q$ contains a minimal $S$ system $\{C_1^\prime, \ldots , C_{k-1}^\prime\}$ and $H^{\prime}$ has a $l$-$S$-matching $M$, where $H^{\prime}=G-Q-\cup _{i=1}^{k-1} C_i^\prime$ and $l\ge \min\{|Y_{H^{\prime}}|, |S_{H^{\prime}}|\}=\min\{(4-2, 3-1\}=2$.
 Let $\{x_1x_2, x_3x_4\}\subseteq M$ with $x_1, x_3\in S$. By Claim \ref{claim xin2}, $H^\prime$ does not contain a path $P$ of even order such that $|S_P|=2$. 
Thus $E(G[V(M)])=M$, $e(\{x_1, x_3\}, H^\prime-M)\le |Y_{H^\prime-M}|$ and if $e(\{x_1, x_3\}, H^\prime-M)\ne 0$ then $e(\{x_2, x_4\}, H^\prime-M)\le |X_{H^\prime-M}|$. And we also have $e(\{x_1x_2, x_3x_4\}, Q)\le 2$ for otherwise $G-\cup _{i=1}^{k-1} C_i^\prime$ is $S$-$1$-feasible and so $G$ is $S$-$k$-feasible. It follows that 
\begin{align*}
  e(\{x_1x_2, x_3x_4\}, \cup_{i=1}^{k-1} C_i^\prime)  & =  e(\{x_1x_2, x_3x_4\}, G-Q-M-(H^\prime-M))  \\
    &  \ge  2(n+1)-2-4-\max\{|Y_{H^\prime-M}|+|X_{H^\prime-M}|, 2|X_{H^\prime-M}|\} \\
   & \mbox{\ $\ge\sum_{i=1}^{k-1}|C_i^\prime|+2$. \ }
\end{align*}
Thus by Lemmas \ref{lemma 1} (4) and \ref{lemma 2} (2), there exists a feasible quadrilateral, say $C_1$, such that $G[V(C_1\cup \{x_1x_2,x_3x_4\})]$ contains a feasible quadrilateral and a path $P^\prime$ of order 4 with $|S_{P^\prime}|=2$. So by Claim \ref{claim xin2}, $G$ is $S$-$k$-feasible, a contradiction.

Thus $\max\{d_H(u): u\in S_H\}\le 1$. By Lemma \ref{lemma 3.2} (1), $G$ contains a minimal $S$ system $\{C_1^\prime, \ldots , C_{k-1}^\prime\}$ and $H^{\prime}$ has a $|S_{H^\prime}|$-$S$-matching $M$, where $H^{\prime}=G-\cup _{i=1}^{k-1} C_i^\prime$.
Clearly, $E(G[V(M)])=M$ and $e(X_M, H^\prime-M)=0$. It follows that 
\begin{equation*}
\mbox{\ $e(M, \cup _{i=1}^{k-1} C_i^\prime)\ge |X_M|(n+1)-|Y_M|(\frac{|H^\prime-M|}{2})-2|X_M|=|X_M|(\sum_{i=1}^{k-1}\frac{|C_i^\prime|}{2}+(\frac{|V(M)|-4}{2})+1)$. \ } 
\end{equation*}
Thus by Lemmas \ref{lemma 1} (4) and \ref{lemma 2} (2), $G$ contains a minimal $S$ system $\{C_1^0, \ldots , C_{k-1}^0\}$ and $\max\{d_{H^0}(u): u\in S_{H^0}\}\ge 2$, where $H^0=G-\cup_{i=1}^{k-1}C_i^0$, a contradiction.

\noindent\textbf{Case 2.} $|X_H|=|S_H|=3$.

Choose $\{C_1, \ldots, C_{k-1}, H\}$ such that $\{C_1, \ldots, C_{k-1}\}$ is a minimal $S$ system of $G$ and $H$ contains a $3$-$S$-matching $M$, and subject to this, $e(H)$ is maximum. By Lemma \ref{lemma 3.2} (1), such $H$ exists. Let $M=\{x_1y_1, x_2y_2, x_3y_3\}$ such that $x_1, x_2, x_3$ in the same partition class. By Claim \ref{claim xin1} and the fact that $H$ is not $S$-1-feasible, $H$ is isomorphic to a graph of $\{H_1, H_2, H_3\}$, where $V(H_1)=V(H_2)=V(H_3)=V(M)$, $E(H_1)=M$, $E(H_2)=M\cup \{y_1x_2\}$ and $E(H_3)=M\cup \{y_1x_2, x_2y_3\}$.

First suppose that $H\cong H_1$ or $H\cong H_2$. Clearly, $x_1y_2, x_2y_3, x_3y_1\notin E$. Thus $e(H, \cup_{i=1}^{k-1}C_i)\ge 3(n+1)-8=\sum_{i=1}^{k-1}\frac{3|C_i|}{2}+4$ as $e(H)\le 4$. This implies that there exists a cycle, say $C_1$, such that $e(H,C_1)\ge \frac{3|C_1|}{2}+1$.
By Lemma \ref{lemma 1} (4), Lemma \ref{lemma 2} (2) and the choice of $H$, we have $H\cong H_2$ and $|C_1|=4$.
Let $C_1=u_1v_1u_2v_2u_1$ with $u_1$ and $x_1$ in the same partition class.

If $e(x_1y_1x_2y_2, C_1)\ge 6$, then by Lemma \ref{lemma 2} (3) and the assumption that $G$ is not $S$-$k$-feasible, $e(x_1y_1x_2y_2, C_1)=6$, $d_{C_1}(x_1)=0$ or $d_{C_1}(y_2)=0$, and $e(\{x_3,y_3\}, C_1)\ge 1$ as $e(H,C_1)\ge 7$, which implies that $e(\{x_i, y_i\}, C_1)=4$ and $e(\{x_3,y_3\}, \{u_j, v_j\})\ge 1$ for some $i, j\in \{1,2\}$. It follows that $G[V(C_1\cup H)]$ contains a quadrilateral $C^\prime=x_iv_{3-j}u_{3-j}y_ix_i$ and a subgraph $H^\prime$ such that $H^\prime$ contains a 3-$S$-matching, $e(H^\prime)\ge 5$ and $H^\prime$ is disjoint from $C^\prime$, a contradiction.

 Thus $e(x_1y_1x_2y_2, C_1)\le 5$ and $e(\{x_3,y_3\}, C_1)\ge 2$. 
Note that $e(\{x_1,x_2,x_3\}, C_1)\ge 4$ or $e(\{y_1,y_2,y_3\}, C_1)\ge 4$. If $d_{C_1}(x_3)\ge 1$ and $d_{C_1}(y_3)\ge 1$, then $e(\{x_1,x_2\}, \{v_i\})\ge 1$ and $x_3v_j\in E$ for $\{i,j\}=\{1,2\}$; If $d_{C_1}(x_3)=0$ or $d_{C_1}(y_3)=0$, then $e(x_1y_1x_2y_2, C_1)=5$, which implies that $e(\{x_i, y_i\}, \{u_j, v_j\})=2$ and $e(\{x_{3-i}, y_{3-i}, x_3, y_3\}, \{u_{3-j}, v_{3-j}\})\ge 2$ for some $i, j\in \{1,2\}$. In both cases, $G[V(C_1\cup H)]$ contains two disjoint subgraphs $C^\prime$ and $H^\prime$ such that $C^\prime$ is a quadrilateral, $H^\prime$ contains a 3-$S$-matching and $e(H^\prime)\ge 5$, a contradiction.

Then suppose that $H\cong H_3$. Let $L_1=\{x_1,y_1,x_3,y_3\}$ and $L_2=\{x_1,y_1,y_2,x_3\}$. Note that $x_1y_3, y_1x_3, x_1y_2\notin E$, $e(L_1,H)=6$ and $e(L_2,H)=5$, we have $e(L_l,\cup_{i=1}^{k-1}C_i)\ge 2(n+1)-6= \sum_{i=1}^{k-1}|C_i|+2$ for each $l\in \{1,2\}$, which implies that there exist two cycles, say $C_i$ and $C_j$ (maybe $i=j$), such that $e(L_1, C_i)\ge |C_i|+1$ and $e(L_2, C_j)\ge |C_j|+1$. By Lemma \ref{lemma 1} (3) and (4) and Claim \ref{claim xin1}, we have $|C_i|=|C_j|=4$, and for each $l\in \{1,2\}$,
\begin{equation}\label{eqpath6}
\mbox{\ $G[V(C_l\cup H)]$ does not contain disjoint cycle $C$ and path $P$, where $|C|=4$ and $|P|=6$.\ }
\end{equation} 

First consider the subgraph $G[V(C_i)\cup L_1]$. Since $e(L_1, C_i)\ge 5$, we have $e(\{x_p,y_p\},C_i)\ge 3$ and $e(\{x_q,y_q\},C_i)\ge 1$, where $\{p, q\}=\{1, 3\}$.
If $d_{C_i}(x_p)=2$, then $e(\{y_p,y_q\},C_i)\ge 3$ or $d_{C_i}(x_q)\ge 1$, which implies that $G[V(C_i\cup H)]$ contains two disjoint quadrilaterals or a quadrilateral and a path of order 6, so $G$ is $S$-$k$-feasible by Claim \ref{claim xin1}, a contradiction. Thus $d_{C_i}(x_p)=1$ and $d_{C_i}(y_p)=2$. If $d_{C_i}(x_q)\ne 0$, then $G[V(C_i\cup H)]$ contains a quadrilateral and a path of order 6, contradicting (\ref{eqpath6}). Thus $d_{C_i}(x_q)=0$ and $d_{C_i}(y_q)=2$ as $e(L_1, C_i)\ge 5$.

Then consider the subgraph $G[V(C_j)\cup L_2]$. Since $e(L_2,C_j)\ge 5$ and $e(\{x_1,x_3\},C_j)\ge 1$, we have $d_{C_j}(y_2)\le 1$ by (\ref{eqpath6}). Thus $e(\{x_1,y_1\},C_j)\ge 2$. 
If $d_{C_j}(x_1)\ge 1$ and $d_{C_j}(y_1)\ge 1$, then $G[V(C_j\cup H)]$ contains a quadrilateral $C$ and a path of order 6 which disjoint from $C$ as $e(\{y_1,y_2\},C_j)\ge 3$ or $e(\{x_1,x_3\},C_j)\ge 3$, contradicting (\ref{eqpath6}). If $d_{C_j}(x_1)=0$, then $d_{C_j}(y_2)=1$ and $d_{C_j}(y_1)=d_{C_j}(x_3)=2$, thus $G[V(C_j\cup H)]$ contains two disjoint quadrilaterals, a contradiction. Thus $d_{C_j}(y_1)=0$, $d_{C_j}(y_2)=1$ and $d_{C_j}(x_1)=d_{C_j}(x_3)=2$.

By the argument above, we have $i\ne j$. Let $C_i=u_1u_2u_3u_4u_1$ and $C_j=v_1v_2v_3v_4v_1$ with $u_1, v_1, x_1$ in the same partition class. We may assume that $x_pu_2, y_2v_1\in E$. Then $G[V(H\cup C_i\cup C_j)]$ contains two disjoint quadrilaterals $y_qu_1u_4u_3y_q$, $x_qv_2v_3v_4x_q$ and a path $v_1y_2x_2y_px_pu_2$, thus $G$ is $S$-$k$-feasible by Claim \ref{claim xin1}, a contradiction.

The proof of Theorem \ref{theorem 2} is now complete.

\section{Proof of Theorem \ref{theorem}}
\label{pftheorem2}

In order to prove Theorem \ref{theorem}, first we show the following claim. Here, a path $P$ of $G$ is \textit{good} if $|S_P|\ge 2$, $|P|$ is even and $G-P$ has a $|S_{G-P}|$-$S$-matching. A good path $P$ is maximal if there does not exist a good path $Q$ with $S_P\subset S_Q$. 

\begin{clm}\label{keyclaim}
Let $D_1, D_2, \ldots, D_s$ be $s$ disjoint feasible cycles in $G$. If $G-\cup_{i=1}^sD_i$ has a maximal good path $P=x_1y_1\cdots x_py_p$ with $x_1, x_p\in S$, then $G[V(\cup_{i=1}^sD_i\cup P)]$ is $S_{\cup_{i=1}^sD_i\cup P}$-$s$-cyclable or $G[V(P)]$ is $S_P$-1-cyclable. 
%(2) If any $|S_H|$-$S$-matching $M$ of $H$ is induced, then 
\end{clm}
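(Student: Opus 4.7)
The argument is by cases on whether $x_1 y_p \in E(G)$.

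If $x_1 y_p \in E$, then $x_1 y_1 x_2 \cdots x_p y_p x_1$ is a Hamilton cycle of $G[V(P)]$; since $|S_P| \geq 2$, this cycle is feasible and covers $S_P$, so $G[V(P)]$ is $S_P$-$1$-cyclable.

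Otherwise $x_1 y_p \notin E$, and $\sigma_{1,1}(S) \geq n+1$ yields $d(x_1) + d(y_p) \geq n+1$. Set $H := G - \bigcup_i D_i - P$, so that $V(G) = V(P) \sqcup V(\bigcup_i D_i) \sqcup V(H)$, and split the degree sum across these three parts. If $e(\{x_1, y_p\}, D_i) \geq |D_i|/2 + 1$ for some $i$, Lemma~\ref{lemma 4}(1) produces a Hamilton cycle $C'$ of $G[V(D_i \cup P)]$ which is feasible (it contains $|S_{D_i}| + |S_P| \geq 4$ vertices of $S$) and covers $V(D_i) \cup V(P)$; replacing $D_i$ by $C'$ yields $s$ disjoint feasible cycles in $G[V(\bigcup_j D_j \cup P)]$ covering $S_{\bigcup_j D_j \cup P}$, which is the first alternative. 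Similarly, if $e(\{x_1, y_p\}, P) \geq |P|/2 + 1$, Lemma~\ref{lemma 4}(2) gives a Hamilton cycle of $G[V(P)]$ that is feasible and covers $S_P$, establishing the second alternative.

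In the remaining case $e(\{x_1, y_p\}, H) \geq (n+1) - \tfrac12 \sum_i |D_i| - \tfrac12 |P| = \tfrac12 |H| + 1$, and I derive a contradiction via the maximality of $P$. Let $M$ be an $|S_{G-P}|$-$S$-matching in $G-P$, which exists because $P$ is good. The key extension step is: if $x_1 y \in E$ for some $y \in (Y \setminus V(P)) \cap N(S_{G-P})$, choose $x \in S_{G-P}$ adjacent to $y$, taking $x$ to be the $M$-partner of $y$ whenever $y \in V(M)$ so that $xy \in M$, and otherwise any $S_{G-P}$-neighbor of $y$. Prepending produces a path $P' = xyx_1y_1 \cdots x_py_p$ of even order with $|S_{P'}| = |S_P| + 1 > |S_P|$; the modified matching $M \setminus \{xy\}$ (or $M \setminus \{xy^*\}$ where $y^*$ is the $M$-partner of $x$) is an $|S_{G-P'}|$-$S$-matching in $G - P'$, so $P'$ is good, contradicting the maximality of $P$. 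Hence $N(x_1) \cap N(S_{G-P}) \cap (Y \setminus V(P)) = \emptyset$, and a symmetric appending argument at $y_p$ yields $N(y_p) \cap S_{G-P} = \emptyset$. Combined with $x_1 y_p \notin E$ and Hall's bound $|N(S_{G-P}) \cap (Y \setminus V(P))| \geq |S_{G-P}|$ (provided by $M$), these restrictions give upper bounds on $d(x_1) + d(y_p)$ that, together with the case assumptions $e(\{x_1, y_p\}, P) \leq |P|/2$ and $e(\{x_1, y_p\}, D_i) \leq |D_i|/2$, contradict $d(x_1) + d(y_p) \geq n+1$.

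The main obstacle is sealing the contradiction in the last case. The maximality constraints alone give $d(x_1) + d(y_p) \leq 2n - 2 - 2|S_{G-P}|$, which only forces $|S_{G-P}| \leq (n-3)/2$ and does not by itself contradict the degree lower bound. Closing the gap requires a careful bookkeeping that refines Hall's inequality by splitting $N(S_{G-P}) \cap (Y \setminus V(P))$ into its intersections with $Y_H$ and $Y_{\bigcup_i D_i}$, and then combining this refinement with the case assumptions on the edges to $V(P)$ and each $V(D_i)$. The matching-swap used in the extension step is also subtle when the candidate $y$ already lies in $V(M)$ but is not matched to the chosen $x$: selecting $x$ to be the $M$-partner of $y$ reduces the modification to a single-edge deletion and preserves the matching property needed for $P'$ to remain good.
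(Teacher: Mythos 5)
Your first two sub-cases (surplus edges into some $D_i$, or into $P$ itself, handled by Lemma \ref{lemma 4}) match the paper. The genuine gap is your third case, where $e(\{x_1,y_p\},G-\bigcup_iD_i-P)\ge \frac{|H|}{2}+1$: you try to rule this case out by contradiction from maximality plus Hall-type counting, and as you yourself compute, the maximality restrictions (no edges from $x_1$ into $N(S_{G-P})\cap(Y\setminus V(P))$, none from $y_p$ into $S_{G-P}$) are far too weak to contradict $d(x_1)+d(y_p)\ge n+1$ — for instance when $S_{G-P}$ is small or empty they say essentially nothing, yet $x_1$ and $y_p$ may have almost all their neighbours in $G-\bigcup_iD_i-P$. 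No refinement of that bookkeeping can close it, because this case is genuinely realizable; in it the claim's conclusion has to be \emph{constructed}, not the case excluded.

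The paper's route, which is the missing idea, is as follows. Take an $S$-matching $M$ of $H-P$ saturating $S_{H-P}$ (where $H=G-\bigcup_iD_i$); maximality of $P$ forces $e(\{x_1,y_p\},M)=0$ (and likewise $e(\{x_p,v\},M)=0$ for any neighbour $v$ of $x_1$ in $H'=H-P-M$), since any edge into $M$ lets you splice in a matching edge and get a good path with strictly larger $S$-intersection. Counting degrees in $G-M$ rather than $G$ recovers a surplus of $\frac{|V(M)|}{2}$ that your count discards. If $x_1$ has no neighbour in $H'$, then $e(\{x_1,y_p\},H')\le\frac{|H'|}{2}$ and the surplus lands on $\bigcup_iD_i\cup P$, reducing to your first two sub-cases. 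If $x_1$ has a neighbour $v\in H'$, one introduces the \emph{second} nonadjacent pair $(x_p,v)$ (nonadjacent, else $vx_1y_1\cdots x_pv$ already covers $S_P$), doubling the degree-sum to $2(n+1)$ over $G-M$; then either $e(\{x_1,v,x_p,y_p\},H'-v)\ge|H'|$, which by pigeonhole yields a common neighbour and hence a feasible cycle covering $S_P$ (possibly using one or two vertices of $H'$), or enough edges fall on $\bigcup_iD_i\cup P\cup\{v\}$ to apply Lemma \ref{lemma 4} with the even-order path $vx_1\cdots x_p$ in place of $P$. Without the matching-exclusion count and the auxiliary pair $(x_p,v)$, your argument cannot reach the conclusion in this case, so the proposal as it stands is incomplete.
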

\begin{proof}
Let $H=G-\cup_{i=1}^sD_i$, $M$ a $|S_{H-P}|$-$S$-matching of $H-P$, and let $H^\prime=H-P-M$. If $d_{H^\prime}(x_1)\ne 0$, then assume that $v\in V(H^\prime)$ is a vertex such that $x_1v\in E$. Since $P$ is maximal, we have $e(\{x_1,y_p\}, M)=0$ and $e(\{x_p, v\}, M)=0$ if $v$ exists. If $d_{H^\prime}(x_1)=0$, let $L=\{x_1, y_p\}$, otherwise, let $L=\{x_1, v, x_p, y_p\}$. 
If $x_1y_p\in E$ or $x_pv\in E$, then $G[V(P)]$ is $S_P$-1-cyclable. Now suppose $x_1y_p, x_pv\notin E$, which implies that $e(L, G-M)\ge \frac{|L|}{2}(n+1)$ as $e(L,M)=0$. If $d_{H^\prime}(x_1)=0$, then $e(L,H^\prime)\le \frac{|H^\prime|}{2}$ and thus $e(L, \cup_{i=1}^{s}D_i\cup P)\ge \frac{1}{2}\sum_{i=1}^{s}|D_i|+\frac{|P|}{2}+1$; if $d_{H^\prime}(x_1)\ne 0$, then $e(L,H^\prime-\{v\})\ge |H^\prime|$ or $e(L, \cup_{i=1}^{s}D_i\cup (P\cup \{v\}))\ge \sum_{i=1}^{s}|D_i|+(|P|+2)+1$. In the above two cases, by Lemma \ref{lemma 4}, we have $G[V(\cup_{i=1}^sD_i\cup P)]$ is $S_{\cup_{i=1}^sD_i\cup P}$-$s$-cyclable or $G[V(P)]$ is $S_P$-1-cyclable .
\end{proof}
%To prove (2)
%
%Let $xy, x^\prime y^\prime\in M$ with $x, x^\prime\in S$. We have $xy^\prime, x^\prime y\notin E$ and $N_{H-M}(x)\cap N_{H-M}(x^\prime)=\emptyset$, and if $N_{H-M}(y)\cap N_{H-M}(y^\prime)\ne \emptyset$ then $N_{H-M}(x)\cup N_{H-M}(x^\prime)=\emptyset$. It follows that $e(\{x,y,x^\prime, y^\prime\}, H)\le 4+|H-M|= |H|$ and so $e(\{x,y,x^\prime, y^\prime\}, \cup_{i=1}^k C_i)\ge 2(n+1)-|H|= \sum_{i=1}^k |C_i|+2$. Hence, there exists a cycle, say $C_1$, such that $e(\{x,y,x^\prime, y^\prime\}, C_1)\ge |C_1|+1$. If $|C_1|\ge 6$, then by Lemma \ref{lemma 1} (4) and the choice of $C_1, \ldots, C_k$ we have $e(\{x,y,x^\prime, y^\prime\}, C_1)\le |C_1|$, a contradiction. If $|C_1|=4$, then by Lemma \ref{lemma 2} (2) we have $G[V(C_1)\cup \{x,y,x^\prime, y^\prime\}]$ contains a quadrilateral $C^\prime$ and a path $P^\prime$ of order 4 such that $P^\prime$ is disjoint from $C^\prime$. By the similar argument of the case $H$ contains a path $P$ of even order such that $S_H=S_P$, we are done.

Now we divide the proof into two steps. The first step is to show that $G$ contains $s$ disjoint feasible cycles covering $S$ for some $s\ge k$. The second step is to transform these $s$ disjoint feasible cycles into exactly $k$ disjoint feasible cycles, that is, $G$ is $S$-$k$-cyclable. 

\subsection{Showing that $G$ is $S$-$s$-cyclable for some $s\ge k$}

Suppose that $G$ is not $S$-$s$-cyclable for all $s\ge k$.
Let $r$ be the largest integer such that $G$ is $S$-$r$-feasible. Let $\{C_1,\ldots, C_r\}$ be a minimal system of $G$.
By Theorem \ref{theorem 2} we have $r\ge k+1$ if $|S|\ge 2k+3$ and $r=k$ if $|S|=2k+2$. Let $H=G-\cup_{i=1}^rC_i$. Clearly, by our assumption $|S_H|\ge 1$.

%Let $k$ be a positive integer. Let $G[X,Y]$ be a balanced bipartite graph of order $2n$, and $S$ a subset of $X$ such that $|S|\geq 2k+2$ and $\sigma_{1,1}(S)\ge n+1$. Note that by Theorem \ref{theorem 2} (1), if $|S|\ge 2k+3$ then $G$ contains $k+1$ disjoint feasible cycles. Thus Theorem \ref{theorem 2} (2) follows from Lemma \ref{lemma 4.1}.

%\begin{lem}\label{lemma 4.1}
%Let $k$ be a positive integer. Let $G[X,Y]$ be a balanced bipartite graph of order $2n$ and $S$ a subset of $X$ such that $\sigma_{1,1}(S)\geq n+1$. Then the following statements hold.
%
%\emph{(1)} If $|S|=2k+2$ and $G$ does not contain $k+1$ disjoint feasible cycles, then $G$ contains exactly $k$ disjoint feasible cycles covering $S$.
%
%\emph{(2)} If $|S|\geq 2k+2$ and $G$ contains $k+1$ disjoint feasible cycles, then $G$ contains at least $k$ disjoint feasible cycles covering $S$.
%
%\emph{(3)} If $|S|\geq 2k+2$ and $G$ contains at least $k$ disjoint feasible cycles covering $S$, then $G$ contains exactly $k$ disjoint feasible cycles covering $S$.
%\end{lem}
%
%Now we give the proof of the above Lemma in Subsections 4.1-4.3.

%\subsection{Proof of Lemma \ref{lemma 4.1} (1)}

%By Theorem \ref{theorem 2} (1), we have $G$ contains $k$ disjoint feasible cycles $C_1, \ldots, C_k$. 

\noindent\textbf{Case 1.} $r=k$ and $|S|=2k+2$.

Choose $C_1, \ldots, C_k$ such that $\{C_1, \ldots, C_k\}$ is a minimal $S$ system of $G$. By Lemmas \ref{lemma 3.1}-\ref{lemma 3.2} and $|H|\ne 0$, we can choose $C_1, \ldots, C_k$ such that $|S_{C_i}|=2$ for each $i\in \{1, \ldots, k\}$ and $H$ has a $|S_H|$-$S$-matching $M$. Clearly, $|S_H|=2$. Let $S_H=\{x, x^\prime\}$ and $M=\{xy, x^\prime y^\prime\}$. 

Applying Claim \ref{keyclaim} with $s=k$ and $(D_1,\ldots, D_s)=(C_1,\ldots, C_k)$, we know that $H$ does not contain a path $P$ of even order with $|S_P|=2$, for otherwise, $G$ is $S$-$k$-cyclable or $S$-$(k+1)$-cyclable, a contradiction.
Thus $xy^\prime, x^\prime y\notin E$ and $N_{H-M}(x)\cap N_{H-M}(x^\prime)=\emptyset$, and if $N_{H-M}(y)\cap N_{H-M}(y^\prime)\ne \emptyset$ then $N_{H-M}(x)\cup N_{H-M}(x^\prime)=\emptyset$. It follows that $e(M, H)\le 4+|H-M|= |H|$ and so $e(M, \cup_{i=1}^k C_i)\ge 2(n+1)-|H|= \sum_{i=1}^k |C_i|+2$. Hence, there exists a cycle, say $C_1$, such that $e(M, C_1)\ge |C_1|+1$. 

If $|C_1|\ge 6$, then by Lemma \ref{lemma 1} (4) and the choice of $C_1, \ldots, C_k$ we have $e(M, C_1)\le |C_1|$, a contradiction. If $|C_1|=4$, then by Lemma \ref{lemma 2} (2), $G[V(C_1\cup M)]$ contains a quadrilateral $C^\prime$ and a path $P^\prime$ of order 4 such that $P^\prime$ is disjoint from $C^\prime$. Thus by Claim \ref{keyclaim}, $G$ is $S$-$k$-cyclable or $S$-$(k+1)$-cyclable, a contradiction.

%\subsection{Proof of Lemma \ref{lemma 4.1} (2)}

%Let $r$ be the largest integer such that $G$ contains $r$ disjoint feasible cycles $C_1^0,\ldots, C_r^0$, choose the above cycles so that $\sum_{i=1}^{r}|C_i^0|$ is minimum. Clearly, $r\ge k+1$. 

\noindent\textbf{Case 2.} $r\ge k+1$ and $|S|\ge 2k+3$.

By Lemma \ref{lemma 3.2} (1), we can choose $C_1, \ldots, C_{r}$ in $G$ such that $H$ has a $|S_H|$-$S$-matching $M$ and subject to this, $\sum_{i=1}^{r}|S_{C_i}|$ is maximum.  
Applying Claim \ref{keyclaim} with $s=r$ and $(D_1,\ldots, D_s)=(C_1,\ldots, C_r)$, we know that $H$ does not contain a good path $P$, for otherwise, we can extend $P$ to a maximal one, and then $G$ is $S$-$r$-cyclable or $S$-$(r+1)$-cyclable, a contradiction. 

\noindent\textbf{Case 2.1.} $|S_H|\ge 2$.

Recall that $H$ does not contain a good path. We have $E(G[V(M)])=M$, and for each pair $xy, x^\prime y^\prime\in M$ with $x, x^\prime\in X$ we have $N_{H-M}(x)\cap N_{H-M}(x^\prime)=\emptyset$, and $N_{H-M}(x)\cup N_{H-M}(x^\prime)=\emptyset$ if $N_{H-M}(y)\cap N_{H-M}(y^\prime)\ne \emptyset$. It follows that $xy^\prime, x^\prime y\notin E$ and $e(\{x,y,x^\prime, y^\prime\}, H)\le 4+|H-M|\le |H|$. Thus $e(\{x,y,x^\prime, y^\prime\}, \cup_{i=1}^r C_i)\ge 2(n+1)-|H|= \sum_{i=1}^r |C_i|+2$. It follows that there exists a cycle, say $C_1$, such that $e(\{x,y,x^\prime, y^\prime\}, C_1)\ge |C_1|+1$. Hence, by Lemma \ref{lemma 4}, $G$ contains $r$ disjoint feasible cycles $C_1^\prime, \ldots, C_r^\prime$ such that $H^\prime$ has a $|S_{H^\prime}|$-$S$-matching and $\sum_{i=1}^r|S_{C_i^\prime}|>\sum_{i=1}^r|S_{C_i}|$, where $H^\prime=G-\cup_{i=1}^rC_i^\prime$, a contradiction.

\noindent\textbf{Case 2.2.} $|S_H|=1$.

Let $M=xy$ with $x\in S$. If there exists a path $P^0$ of even order covering $S_{H\cup C_i}$ for some $i\in \{1,\ldots,r\}$, applying Claim \ref{keyclaim} with $s=r-1$ and $(D_1,\ldots, D_s, P)=(C_1,\ldots, C_{i-1},C_{i+1}, \ldots,   C_r, P^0)$, then $G$ is $S$-$(r-1)$-cyclable or $S$-$r$-cyclable, a contradiction.

%First suppose that there  exists a path $P^0$ of even order covering $S_{M\cup C_i}$ for some $i\in \{1,\ldots,r\}$, assume that $i=1$. Then we can obtain a path $P=x_1y_1\cdots x_{p}y_{p}$ of order $2p$ with $x_1, x_p\in S$ and $S_P=S_{M\cup C_1}$. Let $H^\prime=G[V(H\cup C_1)]-P$ and let $v$ be the vertex in $V(H^\prime)$ such that $x_1v\in E$ if $d_{H^\prime}(x_1)\ne 0$. If $d_{H^\prime}(x_1)=0$, let $L=\{x_1, y_p\}$, otherwise, let $L=\{x_1, v, x_p, y_p\}$. By Fact \ref{fact 4.1} (1), $x_1y_p, x_pv\notin E$, which implies that $e(L, G)\ge \frac{|L|}{2}(n+1)$. On the other hand, by Lemma \ref{lemma 4} and Fact \ref{fact 4.1}, we have $e(L, \cup_{i=2}^{r}C_i\cup P\cup H^\prime)\le \frac{1}{2}\sum_{i=2}^{r}|C_i|+\frac{|P|}{2}+\frac{|H^\prime|}{2}=n$ if $d_{H^\prime}(x_1)=0$ or $e(L, \cup_{i=2}^{r}C_i\cup (P\cup \{v\})\cup (H^\prime-\{v\}))\le \sum_{i=2}^{r}|C_i|+(|P|+2)+(|H^\prime|-1)=2n+1$ if $d_{H^\prime}(x_1)\ne 0$, which implies that $e(L, G)<\frac{|L|}{2}(n+1)$, a contradiction.

Then $e(\{x,y\},\cup_{i=1}^r C_i)=0$ and for each $i\in \{1,\ldots,r\}$ and $uv\in E(C_i)$ with $u\in S$ we have $N_{H-M}(x)\cap N_{H-M}(u)=\emptyset$ and if $e(\{x, u\}, H-M)\ne 0$ then $N_{H-M}(y)\cap N_{H-M}(v)=\emptyset$. Note that $xv, yu\notin E$, we have 
\begin{equation*}
\mbox{\ $2(n+1)\le e(\{x, y, u, v\}, \cup_{i=1}^r C_i\cup M\cup (H-M))\le (0+\sum_{i=1}^r|C_i|)+2+|H-M|=2n$, \ }
\end{equation*}
 a contradiction.

\subsection{Showing that $G$ is $S$-$k$-cyclable}

Note that $G$ contains $s$ disjoint feasible cycles $C_1,\ldots, C_s$ covering $S$ for some $s\ge k$. Choose $C_1,\ldots, C_s$ such that $s\ge k$ and $s$ is minimum, and subject to this, $\sum_{i=1}^s|C_i|$ is minimum. Let $H=G-\cup_{i=1}^sC_i$. If $s=k$, then we are done. So we may assume that $s\ge k+1$. Next we get a contradiction by showing that $G$ is $S$-$(s-1)$-cyclable. To do this, we first study the structure between $C_i$ and $\cup_{j=1}^sC_j-C_i$ and give the following four claims, where $1\le i\le s$.
 %and we have the following fact.

%\begin{fact}\label{fact 4.2}
%Let $I\subseteq \{1, \ldots, s\}$. Then the following statements hold.
%
%\emph{(1)} $G[V(\cup_{i\in I}C_i\cup H)]$ is not $S_{\cup_{i\in I}C_i}$-$(|I|-1)$-cyclable.
%
%%does not contain $|I|-1$ disjoint feasible cycles covering $S_{\cup_{i\in I}C_i}$.
%
%\emph{(2)} $G[V(\cup_{i\in I}C_i\cup H)]$ does not contain $|I|$ disjoint feasible cycles $C_1^\prime,\ldots, C_{|I|}^\prime$ covering $S_{\cup_{i\in I}C_i}$ such that $\sum_{i=1}^{|I|}|C_i^\prime|<\sum_{i\in I}|C_i|$.
%\end{fact}

\begin{clm}\label{claim 4.1}
$e(C_i, \cup_{j\in \{1, \ldots, s\}\setminus \{i\}}C_j)\ne 0$ for each $i\in \{1, \ldots, s\}$.
\end{clm}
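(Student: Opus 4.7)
The plan is to argue by contradiction. Suppose the claim fails at $i=1$, i.e., $e(C_1,\bigcup_{j=2}^{s}C_j)=0$; the goal is to produce $s-1$ disjoint feasible cycles covering $S$, contradicting the minimality of $s$ in the choice of $\{C_1,\ldots,C_s\}$.

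The first step converts the edge-absence hypothesis into strong degree information via $\sigma_{1,1}(S)\ge n+1$. Set $n_1=|V(C_1)|/2$ and write $S_i^{\ast}=S\cap V(C_i)$, $Y_i^{\ast}=Y\cap V(C_i)$, $X_H=X\cap V(H)$, $Y_H=Y\cap V(H)$. For any $x\in S_1^{\ast}$ and $y\in\bigcup_{j\ge 2}Y_j^{\ast}$, the hypothesis gives $xy\notin E$, while the fact that $y$ has no neighbor in $V(C_1)$ forces $d(y)\le n-n_1$; the degree condition then yields $d(x)\ge n_1+1$. Since the neighbors of $x$ all lie in $Y_1^{\ast}\cup Y_H$ and $|Y_1^{\ast}|=n_1$, $x$ must have at least one neighbor in $Y_H$. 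Symmetric arguments show every vertex of $\bigcup_{j\ge 2}S_j^{\ast}$ has a neighbor in $Y_H$, every vertex of $Y_1^{\ast}\cup\bigcup_{j\ge 2}Y_j^{\ast}$ has a neighbor in $X_H$, and in particular $X_H$ and $Y_H$ are both nonempty.

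The second step uses these bridges through $V(H)$ to merge $C_1$ with some $C_{j^{\ast}}$ (for some $j^{\ast}\ge 2$) into a single feasible cycle $C'$ with $V(C')\subseteq V(C_1)\cup V(C_{j^{\ast}})\cup V(H)$ that covers $S_1^{\ast}\cup S_{j^{\ast}}^{\ast}$. The construction locates two internally vertex-disjoint paths $P_1,P_2$ from $V(C_1)$ to $V(C_{j^{\ast}})$ through $V(H)$ and splices them with suitable arcs of $C_1$ and $C_{j^{\ast}}$, invoking the insertion/hamiltonicity tools of Section~\ref{pre} (in particular Lemmas~\ref{lemma 2} and~\ref{lemma 4}). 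Replacing $C_1$ and $C_{j^{\ast}}$ by $C'$ then produces $s-1$ disjoint feasible cycles covering $S$, the desired contradiction.

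The hard part is ensuring that the two bridging paths exist and that the splice really does visit every vertex of $S_1^{\ast}\cup S_{j^{\ast}}^{\ast}$. A natural case split proceeds according to whether some single $H$-vertex is adjacent to both $V(C_1)$ and $V(C_{j^{\ast}})$ (yielding a length-two bridge and a routine splice) or whether only longer bridges through $H$-edges are available; in the latter case Claim~\ref{keyclaim} applied to the reduced system $\{C_2,\ldots,C_s\}$ with a good path extracted from $C_1\cup H$ is the expected tool. Choosing $j^{\ast}$ by a pigeonhole argument on the $Y_H$-neighbors of $S_1^{\ast}$ versus $S\setminus S_1^{\ast}$ established in the first step, and then the careful bookkeeping to guarantee all of $S_1^{\ast}\cup S_{j^{\ast}}^{\ast}$ is covered by $C'$, is the delicate point of the proof.
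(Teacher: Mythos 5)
Your overall strategy (suppose $e(C_1,\cup_{j\ge 2}C_j)=0$, use $\sigma_{1,1}(S)\ge n+1$ to force attachments into $H$, then merge $C_1$ with some other cycle through $H$ into a single feasible cycle covering both $S$-sets, contradicting the minimality of $s$) is exactly the paper's strategy in outline. However, your write-up has a genuine gap precisely at the step you yourself flag as ``the delicate point'': the existence of \emph{two} vertex-disjoint bridges through $H$ attached at the right places, and the bookkeeping that keeps all of $S_{C_1}\cup S_{C_{j^*}}$ on the merged cycle, are never established. Your Step 1 only extracts that each relevant vertex has \emph{at least one} neighbour in $H$; that is strictly weaker than what is needed. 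For instance, all of $S_{C_1}$ could attach to a single vertex of $Y_H$ and all of $S_{C_2}$ to a different one, and then no length-two bridge exists and no pigeonhole choice of $j^*$ helps. The paper instead proves the stronger counting statement that for every nonadjacent cross pair ($a\in S_{C_1}$, $b\in Y_{\cup_{j\ge2}C_j}$, or symmetrically), $e(\{a,b\},H)\ge \frac{|H|}{2}+1$, because their degrees inside $\cup_j C_j$ total at most $\frac12\sum_j|C_j|$. This inequality, applied to carefully paired vertices, is what forces \emph{common} $H$-neighbours and hence the bridges: in the case where some $x\in X_{C_1}$ and $z\in X_{C_j}$ have a common $H$-neighbour $v$, it is applied to $\{x^+,z^-\}$ and to the nearest $S$-vertices $x_1,z_1$ (chosen so that the arcs omitted from the spliced cycle contain no vertices of $S$) to produce a second bridge $u\ne v$; in the opposite case, a further degree count on $\{v_1,v_2\}$ and $\{x_1^+,z_1^-\}$ yields two distinct vertices $u_1,u_2$ giving the two bridges. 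None of this is replaced by anything concrete in your proposal.

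Moreover, the fallback tools you name do not straightforwardly fill the hole. Lemma \ref{lemma 2} concerns a quadrilateral versus a small subgraph and Lemma \ref{lemma 4} requires an edge count of the form $e(\{x,y\},C)\ge \frac{|C|}{2}+1$ that your Step 1 does not provide across the gap between $C_1$ and $C_{j^*}$ (indeed $e(C_1,C_{j^*})=0$ here). Claim \ref{keyclaim} is also not applicable as you suggest: its hypothesis demands a \emph{maximal good path}, i.e.\ an even path $P$ with $|S_P|\ge 2$ such that $G-P$ has an $|S_{G-P}|$-$S$-matching, a condition you have no way to verify after extracting a path from $C_1\cup H$; and even its favourable conclusion ($G[V(P)]$ is $S_P$-1-cyclable) would only reproduce $s$ cycles unless $P$ already covers all of $S_{C_1}$ and the matching condition holds, so it does not by itself reduce the number of cycles below $s$. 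In short, the skeleton is right, but the core construction that the claim actually requires is missing and is not recoverable from the lemmas as invoked.
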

\begin{proof}
On the contrary, suppose that $e(C_1, \cup_{j=2}^sC_j)=0$. Then 
\begin{equation}\label{eqclaim4.1}
 \begin{split}
& \mbox{\ for any $a\in V(C_1)$ and $b\in V(\cup_{j=2}^sC_j)$,\ }\\
& \mbox{\ $e(\{a, b\}, \cup_{j=1}^sC_j)\le \frac{1}{2}\sum\nolimits_{j=1}^s|C_j| $ and $ab\notin E$.\ } 
  \end{split}
\end{equation}
In particular, by the condition $\sigma_{1,1}(S)\ge n+1$,
\begin{equation}\label{eq2claim4.1}
 \begin{split}
& \mbox{\ if $a\in S_{C_1}, b\in Y_{\cup_{j=2}^sC_j}$ or $a\in Y_{C_1}, b\in S_{\cup_{j=2}^sC_j}$,\ }\\
& \mbox{\ then $e(\{a, b\}, H)\ge \frac{|H|}{2}+1$ and $d_H(a)\ge 1, d_H(b)\ge 1$.\ }
  \end{split}
\end{equation}
%%%%
\begin{center}
\begin{picture}(382,110)\linethickness{0.8pt}
\put(31.3,75.3){\circle{61.6}}
\Line(56,93.6)(78.3,105)
\Line(78.3,105)(100.7,93.6)
%\Line(56,93.6)(100.7,93.6)

\put(125.3,75.3){\circle{61.6}}
\put(221.3,75.3){\circle{61.6}}

\put(315.3,75.3){\circle{61.6}}

\put(8.2,54.9){\vector(-68,73){1}}
\put(148.4,95.6){\vector(68,-73){1}}
\put(198.5,54.7){\vector(-68,73){1}}
\put(337.7,96.3){\vector(68,-73){1}}
\put(10,60){$C_1$}
\put(135,60){$C_2$}
\put(200,60){$C_1$}
\put(325,60){$C_2$}
%
%figure1
\put(42,89){$x$}
\put(46.3,76.5){$x^+$}
\put(43,58){$x_1$}
\put(101.8,76.5){$z^-$}
\put(105.3,89){$z$}
%\put(109.5,49.5){$x_2$}
\put(103,58){$z_1$}
\put(76,108.5){$v$}
%\put(110,60){$(x_2^+)$}
%
\put(380,59){$\in X$}
\put(380,31){$\in S$}
\put(380,45){$\in Y$}
\put(370,62){\circle*{3.8}}
\put(370,48){\circle*{3.8}\color{white}\circle*{2.6}}
\color{black}\polygon*(369,32.5)(371.5,32.5)(371.5,35)(369,35)\polygon(369,32.5)(371.5,32.5)(371.5,35)(369,35)
%figure2
\Line(246,93.6)(261,105)
\Line(275.7,105)(290.7,93.6)
\put(261,105){\circle*{3.8}\color{white}\circle*{2.6}}
\put(275.7,105){\circle*{3.8}\color{white}\circle*{2.6}}
%\Line(284.5,79.5)(275.3,61.8)
\put(258,108.5){$v_1$}
\put(273,108.5){$v_2$}
%\Line(246,93.6)(290.7,93.6)
%\put(246,93.6){\circle*{3.8}}
\put(233,89){$x_1$}
\color{black}\polygon*(245,92)(247.5,92)(247.5,94.5)(245,94.5)\polygon(245,92)(247.5,92)(247.5,94.5)(245,94.5)
\put(237.5,76.5){$x_1^+$}
\put(251.6,80.9){\circle*{3.8}\color{white}\circle*{2.6}}
\put(295.3,90){$z_1$}
\color{black}\polygon*(289,92)(291.5,92)(291.5,94.5)(289,94.5)\polygon(289,92)(291.5,92)(291.5,94.5)(289,94.5)
\put(290,76){$z_1^-$}
\put(285,80.9){\circle*{3.8}\color{white}\circle*{2.6}}
%\put(303,75){$(x_2)$}
%\put(273.5,50.9){$v$}

%\put(233,58){$x_1^{\prime}$}
%
\put(56,93.6){\circle*{3.8}}
\put(61.6,80.9){\circle*{3.8}\color{white}\circle*{2.6}}
\put(100.7,93.6){\circle*{3.8}}
\put(78.3,105){\circle*{3.8}\color{white}\circle*{2.6}}
\put(94.9,80.3){\circle*{3.8}\color{white}\circle*{2.6}}
%\put(97,63){\circle*{3.8}\color{white}\circle*{2.6}}
%\put(275.3,61.8){\circle*{3.8}\color{white}\circle*{2.6}}

%\color{black}\polygon*(246,57.8)(248.5,57.8)(248.5,60.3)(246,60.3)\polygon(246,57.8)(248.5,57.8)(248.5,60.3)(246,60.3)
\color{black}\polygon*(56,58)(58.5,58)(58.5,60.5)(56,60.5)\polygon(56,58)(58.5,58)(58.5,60.5)(56,60.5)
\color{black}\polygon*(98,58)(100.5,58)(100.5,60.5)(98,60.5)\polygon(98,58)(100.5,58)(100.5,60.5)(98,60.5)
%\color{black}\polygon*(104.5,49.8)(107,49.8)(107,52.3)(104.5,52.3)\polygon(104.5,49.8)(107,49.8)(107,52.3)(104.5,52.3)

\put(23,24){\footnotesize{The case $N_H(x)\cap N_H(z)\ne \emptyset$}}
\put(210,24){\footnotesize{The case $N_H(x_1)\cap N_H(z_1)=\emptyset$}}
%\put(105,0){\footnotesize{Figure 1. The graph $G[V(C_1\cup C_2)]$}}
\end{picture}

\vspace{-15pt}
\footnotesize{Figure 1. The graph $G[V(C_1\cup C_2\cup H)]$}
\end{center}
%%%%

First suppose that there exist two vertices $x\in X_{C_1}$ and $z\in X_{\cup_{j=2}^sC_j}$ such that $N_H(x)\cap N_H(z)\ne\emptyset$ (see Figure 1). Assume that $z\in X_{C_2}$ and $v\in N_H(x)\cap N_H(z)$.
Let $x_1$ and $z_1$ be the vertices such that $S_{C_1[x^+, x_1]}=\{x_1\}$ and $S_{C_2[z_1, z^-]}=\{z_1\}$.
By (\ref{eq2claim4.1}), we have $e(\{x_1, x^+, z_1, z^-\},H)\ge |H|+2$, which implies that there exists a vertex $u\in V(H)\setminus \{v\}$ such that $u\in N_H(x_1)\cap N_H(z_1)$ or $u\in N_H(x^+)\cap N_H(z^-)$. It follows that $G[V(C_1\cup C_2\cup \{v, u\})]$ is $S_{C_1\cup C_2}$-1-cyclable, this is contrary to the choice of $s$.

Then for any pair of vertices $x\in X_{C_1}$ and $z\in X_{\cup_{j=2}^sC_j}$ we have $N_H(x)\cap N_H(z)=\emptyset$.
Let $x_1\in S_{C_1}$ and $z_1\in S_{C_2}$ (see Figure 1). Then $N_H(x_1)\cap N_H(z_1)=\emptyset$. Let $v_1$ and $v_2$ be the vertices such that $v_1\in N_H(x_1)$ and $v_2\in N_H(z_1)$, and $d_{\cup_{j=2}^sC_j}(v_1)=0$ and $d_{C_1}(v_2)=0$. By (\ref{eq2claim4.1}), $v_1$ and $v_2$ exist.
Let $L_1=\{x_1, z_1, v_1, v_2\}$ and $L_2=\{x_1, z_1, x_1^+, z_1^-\}$. Then $e(\{x_1,z_1\}, H)\le \frac{|H|}{2}$, $e(\{v_1, v_2\}, \cup_{j=1}^sC_j)\le \frac{1}{2}\sum_{j=1}^s|C_j|$ and $x_1v_2, z_1v_1\notin E$. Hence, by (\ref{eqclaim4.1}) and (\ref{eq2claim4.1}), we have \begin{align*}
   e(\{v_1, v_2\}, H) &  =e(L_1,G)-e(\{x_1,z_1\},G)-e(\{v_1,v_2\}, \cup_{j=1}^sC_j) \\
    & \mbox{\  $\ge 2(n+1)-n-\frac{1}{2}\sum_{j=1}^s|C_j|=\frac{|H|}{2}+2$,\ }
\end{align*}
\begin{equation*}
\mbox{\ and $e(\{x_1^+, z_1^-\}, H)=e(L_2,H)-e(\{x_1,z_1\},H)\ge 2(\frac{|H|}{2}+1)-\frac{|H|}{2}=\frac{|H|}{2}+2$. \ }
\end{equation*}
This implies that there exist two distinct vertices $u_1, u_2$ such that $u_1\in N_H(v_1)\cap N_H(v_2)$ and $u_2\in N_H(x_1^+)\cap N_H(z_1^-)$. It follows that $G[V(C_1\cup C_2\cup \{v_1,v_2,u_1, u_2\})]$ is $S_{C_1\cup C_2}$-1-cyclable, a contradiction.
\end{proof}

\begin{clm}\label{claim 4.2}
Let $C_i, C_j$ with $i,j\in \{1,\ldots, s\}$ and $i\ne j$ and $xy\in E(X_{C_i}, Y_{C_j})$. Then $y^-, y^+\in S$ and $d_H(y^-)=d_H(y^+)=0$, where $y^-, y^+$ are the predecessor and successor of $y$ in $C_j$.
\end{clm}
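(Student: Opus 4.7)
\textbf{Proof plan for Claim \ref{claim 4.2}.} By the symmetry between $y^-$ and $y^+$ (obtained by reversing the orientation of $C_j$), it suffices to prove $y^-\in S$ and $d_H(y^-)=0$. I argue by contradiction: assume $y^-\notin S$ or $d_H(y^-)\ge 1$. The strategy is to construct $s-1$ disjoint feasible cycles covering $S$, contradicting the minimality of $s$. Concretely, I build a single feasible cycle $C^*$ disjoint from $\{C_\ell:\ell\neq i,j\}$ and containing $S_{C_i}\cup S_{C_j}$; then $\{C^*\}\cup\{C_\ell:\ell\neq i,j\}$ is the desired family.

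Write $C_i = x_1 y_1 \cdots x_{m_i} y_{m_i} x_1$ with $x=x_1$, and $C_j = u_1 v_1 \cdots u_{m_j} v_{m_j} u_1$ with $y=v_1$, so $y^-=u_1$ and $y^+=u_2$. Using the edge $xy=x_1 v_1$, I splice $C_i$ and $C_j$ into a Hamilton path of $V(C_i\cup C_j)$ by breaking $C_i$ at one of the two edges incident to $x_1$ and breaking $C_j$ at one of the two edges incident to $v_1$. There are four such splicings, each producing a Hamilton path of $V(C_i\cup C_j)$ whose endpoints lie in $\{y_1,y_{m_i}\}\times\{u_1,u_2\}$. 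If any of the four candidate edges $y_\beta u_\alpha$ (for $\beta\in\{1,m_i\}$, $\alpha\in\{1,2\}$) actually lies in $E(G)$, then the corresponding splicing closes into a Hamilton cycle $C^*$ of $V(C_i\cup C_j)$, which is feasible because $S_{C_i}\cup S_{C_j}\subseteq V(C^*)$; the contradiction follows.

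It therefore remains to handle the case when none of the four edges $y_\beta u_\alpha$ is present. In the subcase $y^-=u_1\notin S$, I truncate the spliced path at $u_1$ and drop a suitable $Y$-vertex to restore bipartite parity, then apply Lemma~\ref{lemma 4}(2) to the resulting even-order path to produce a Hamilton cycle of its vertex set; the required endpoint degree-sum hypothesis is supplied by $\sigma_{1,1}(S)\ge n+1$ applied to a nonadjacent $S$-vertex/$Y$-vertex pair inside $V(C_i\cup C_j)$. In the subcase $y^-=u_1\in S$ with $d_H(u_1)\ge 1$, I pick $h\in N_H(u_1)$, extend the spliced path by $h$, and apply the same closure strategy, now using $\sigma_{1,1}(S)\ge n+1$ directly on pairs $(u_1,y_\beta)$ with $u_1\in S$.

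The main obstacle will be the closure step: in the absence of all four candidate closure edges, I must confirm via Lemma~\ref{lemma 4}(2) that a Hamilton cycle of the modified vertex set really exists. This requires careful bookkeeping of the degrees $d(y_\beta)$, $d(u_\alpha)$ \emph{restricted to} $V(C_i\cup C_j)$, rather than to all of $G$; the control of degree leakage to the other cycles $C_\ell$ (for $\ell\neq i,j$) and to $V(H)$ must come from the minimality of $\sum|C_\ell|$, Lemma~\ref{lemma 1} applied to each $C_\ell$, and Claim~\ref{claim 4.1}, whose structural information about edges between distinct cycles parallels the situation here.
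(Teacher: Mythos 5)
Your opening moves are fine and broadly consistent with the paper: by symmetry it suffices to treat $y^-$, the splice of $C_i$ and $C_j$ through $xy$ is the right configuration, and any of the four closing edges $y_\beta u_\alpha$ would indeed merge the two cycles into one feasible cycle covering $S_{C_i}\cup S_{C_j}$, contradicting the minimality of $s$. The genuine gap is in the remaining case, which is where essentially all of the work lies. You propose to apply Lemma \ref{lemma 4}(2) to a path inside $V(C_i\cup C_j)$ (possibly plus one vertex of $H$), with the hypothesis $e(\{a,b\},P)\ge \frac{|P|}{2}+1$ "supplied by $\sigma_{1,1}(S)\ge n+1$". But $\sigma_{1,1}(S)$ bounds degree sums in all of $G$, whereas $|C_i|+|C_j|$ may be far smaller than $n$: the endpoints' neighbours can lie almost entirely in the other cycles $C_\ell$ ($\ell\ne i,j$) and in $H$, and the tools you name cannot exclude this. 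Claim \ref{claim 4.1} only asserts $e(C_i,\cup_{\ell\ne i}C_\ell)\ne 0$, a lower bound rather than an upper bound on such "leakage"; and Lemma \ref{lemma 1} is not applicable here, because in this part of the proof the cycles $C_1,\ldots,C_s$ are chosen to minimize $s$ and then $\sum_{i}|C_i|$ subject to covering $S$ — they need not form a minimal system in the sense of Lemma \ref{lemma 1} (a shorter feasible cycle useless for covering $S$ may exist). So the "bookkeeping" you defer is not a routine verification; along the lines you sketch it cannot be completed.

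The paper's proof does not eliminate this leakage, it exploits it. It fixes $x_1^\prime$ (the first $S$-vertex after $x_1$ on $C_1$), $x_2$ (the last $S$-vertex before $y_2$ on $C_2$) and an auxiliary vertex $v$ (equal to $x_2^+$ if $y_2^-\notin S$, or a neighbour of $y_2^-$ in $H$ otherwise), and for $L=\{x_1^\prime,x_1^+,x_2,v\}$ it proves $e(L,C_1\cup C_2)\le |C_1|+|C_2|+1$ and $e(L,H)\le |H|$ by applying Lemma \ref{lemma 4}(2) to pairs of paths whose union is $V(C_1\cup C_2)$ (this is the correct use of that lemma here: to turn the non-existence of a merged cycle into an upper bound, not to build a cycle directly). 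The degree condition then forces at least $\sum_{i\ge 3}|C_i|+1$ edges from $L$ into the remaining cycles; together with Lemma \ref{lemma 4}(1) this shows $y_2^-\in S$, $x_2=y_2^-$, and $e(\{x_2,v\},C_3)\ge \frac{|C_3|}{2}+1$ for some third cycle $C_3$, so $x_2$ and $v$ can be absorbed into $C_3$. A second count with the new set $L_2=\{x_1^\prime,x_1^+,x_2^\prime,x_2^-\}$, using that $G[V(C_1\cup C_2\cup H)-\{x_2,v\}]$ cannot contain a cycle covering $S_{C_1\cup C_2}\setminus\{x_2\}$, then gives $2(n+1)\le 2n+1$, the final contradiction. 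The two-stage counting and the absorption of two vertices into a third cycle are the essential ideas absent from your outline; without them the closure step you yourself flag as the main obstacle fails.
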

\begin{proof}
Suppose that there exists an edge $x_1y_2\in E(X_{C_1}, Y_{C_2})$ such that $y_2^-\notin S$ or $d_H(y_2^-)\ne0$ (See Figure 2). Let $x_1^\prime$ and $x_2$ be the vertices such that $S_{C_1[x_1^+, x_1^\prime]}=\{x_1^\prime\}$ and $S_{C_2[x_2, y_2]}=\{x_2\}$. Let $v$ be the vertex such that $v=x_2^+$ if $y_2^-\notin S$, and $v\in N_H(y_2^-)$ if $y_2^-\in S$ ($y_2^-=x_2$). 
%%%%
\begin{center}
\begin{picture}(382,110)\linethickness{0.8pt}
\put(31.3,75.3){\circle{61.6}}
\Line(56,93.6)(100.7,93.6)
\put(125.3,75.3){\circle{61.6}}
\put(221.3,75.3){\circle{61.6}}
\Line(246,93.6)(290.7,93.6)
\put(315.3,75.3){\circle{61.6}}
\Line(284.5,79.5)(275.3,61.8)
\put(8.2,54.9){\vector(-68,73){1}}
\put(148.4,95.6){\vector(68,-73){1}}
\put(198.5,54.7){\vector(-68,73){1}}
\put(337.7,96.3){\vector(68,-73){1}}
\put(10,60){$C_1$}
\put(135,60){$C_2$}
\put(200,60){$C_1$}
\put(325,60){$C_2$}
%
%figure1
\put(42,89){$x_1$}
\put(46.3,76.5){$x_1^+$}
\put(43,58){$x_1^{\prime}$}
\put(101.8,76){$y_2^-$}
\put(105.3,90){$y_2$}
\put(109.5,49.5){$x_2$}
\put(102,60){$v$}
\put(110,60){$(x_2^+)$}
\put(380,59){$\in X$}
\put(380,31){$\in S$}
\put(380,45){$\in Y$}
\put(370,62){\circle*{3.8}}
\put(370,48){\circle*{3.8}\color{white}\circle*{2.6}}
\color{black}\polygon*(369,32.5)(371.5,32.5)(371.5,35)(369,35)\polygon(369,32.5)(371.5,32.5)(371.5,35)(369,35)
%figure2
\put(237.5,76.5){$x_1^+$}
\put(233,89){$x_1$}
\put(303,75){$(x_2)$}
\put(273.5,50.9){$v$}
\put(295.3,90){$y_2$}
\put(290,76){$y_2^-$}
\put(233,58){$x_1^{\prime}$}
\put(56,93.6){\circle*{3.8}}
\put(61.6,80.9){\circle*{3.8}\color{white}\circle*{2.6}}
\put(100.7,93.6){\circle*{3.8}\color{white}\circle*{2.6}}
\put(94.9,80.3){\circle*{3.8}}
\put(97,63){\circle*{3.8}\color{white}\circle*{2.6}}
\put(275.3,61.8){\circle*{3.8}\color{white}\circle*{2.6}}
\put(246,93.6){\circle*{3.8}}
\put(251.6,80.9){\circle*{3.8}\color{white}\circle*{2.6}}
\put(290.7,93.6){\circle*{3.8}\color{white}\circle*{2.6}}
\color{black}\polygon*(246,57.8)(248.5,57.8)(248.5,60.3)(246,60.3)\polygon(246,57.8)(248.5,57.8)(248.5,60.3)(246,60.3)
\color{black}\polygon*(56,58)(58.5,58)(58.5,60.5)(56,60.5)\polygon(56,58)(58.5,58)(58.5,60.5)(56,60.5)
\color{black}\polygon*(104.5,49.8)(107,49.8)(107,52.3)(104.5,52.3)\polygon(104.5,49.8)(107,49.8)(107,52.3)(104.5,52.3)
\color{black}\polygon*(283.5,78.5)(286,78.5)(286,81)(283.5,81)\polygon(283.5,78.5)(286,78.5)(286,81)(283.5,81)
\put(45,24){\footnotesize{The case $y_2^-\notin S$}}
\put(239,24){\footnotesize{The case $y_2^-\in S$}}
%\put(105,0){\footnotesize{Figure 1. The graph $G[V(C_1\cup C_2)]$}}
\end{picture}

\vspace{-15pt}
\footnotesize{Figure 2. The graph $G[V(C_1\cup C_2)\cup \{v\}]$}
\end{center}
%%%%

Let $L=\{x_1^\prime, x_1^+, x_2, v\}$. 
First we calculate the upper bounds of $e(L, H)$ and $e(L, C_1\cup C_2)$. 
Consider paths $P_1=C_1[x_1^+, x_1]C_2[y_2, x_2]$, $Q_1=C_2[x_2^+, y_2^-]$, $P_2=C_1[x_1^\prime, x_1]C_2[y_2, x_2]v$ and $Q_2=C_1[x_1^+, {x_1^\prime}^-]\cup (C_2[x_2^+, y_2^-]-v)$. 
Clearly, if $y_2^-\notin S$, then $|C_1|+|C_2|=|P_1|+|Q_1|=|P_2|+|Q_2|$; otherwise, $|C_1|+|C_2|=|P_1|=|P_2|-1+|C_1[x_1^+, {x_1^\prime}^-]|$.
Thus by Lemma \ref{lemma 4} (2) and the choice of $s$, we have $x_1^\prime v, x_1^+x_2\notin E$ and $e(L, H)\le |H|$, and if $y_2^-\notin S$ then 
\begin{align*}
e(L, C_1\cup C_2) & =  e(\{x_1^+, x_2\}, P_1)+e(\{x_1^+, x_2\}, Q_1)+e(\{x_1^\prime, v\}, P_2)+e(\{x_1^\prime, v\}, Q_2) \\
 & \mbox{\ $\le  \frac{|P_1|}{2}+(0+\frac{|Q_1|}{2})+\frac{|P_2|}{2}+(\frac{|C_1[x_1^+, {x_1^\prime}^-]|+1}{2}+\frac{|C_2[x_2^+, y_2^-]-v|+1}{2})$ \ }\\
 & =   |C_1|+|C_2|+1,
\end{align*}
and if $y_2^-\in S$ then
\begin{align*}
e(L, C_1\cup C_2) & \le  e(\{x_1^+, x_2\}, P_1)+e(\{x_1^\prime, v\}, P_2)+e(\{x_1^\prime, v\}, C_1[x_1^+, {x_1^\prime}^-]) \\
 & \mbox{\ $\le  \frac{|P_1|}{2}+\frac{|P_2|}{2}+(\frac{|C_1[x_1^+, {x_1^\prime}^-]|+1}{2}+0)$\ } \\
 & =   |C_1|+|C_2|+1.
\end{align*}
Thus $e(L,\cup_{i=3}^sC_i)\ge 2(n+1)-(|C_1|+|C_2|+1)-|H|=\sum _{i=3}^s|C_i|+1$. Moreover, by Lemma \ref{lemma 4} (1) and the choice of $s$, we have $e(\{x_1^\prime, x_1^+\}, \cup_{i=3}^sC_i)\le \frac{1}{2}\sum _{i=3}^s|C_i|$, and $e(\{x_2, v\}, \cup_{i=3}^sC_i)\le \frac{1}{2}\sum _{i=3}^s|C_i|$ if $y_2^-\notin S$. Hence, $y_2^-\in S$, $x_2=y_2^-$ and $e(\{x_2, v\}, \cup_{i=3}^sC_i)\ge \frac{1}{2}\sum _{i=3}^s|C_i|+1$. This implies that there exists a cycle, say $C_3$, such that $e(\{x_2, v\},C_3)\ge \frac{1}{2}|C_3|+1$. 

Let $x_2^\prime$ be the vertex such that $S_{C_2[x_2^\prime, x_2^-]}=\{x_2^\prime\}$ and $L_2=\{x_1^\prime, x_1^+, x_2^\prime, x_2^-\}$.
Consider paths $P_1^\prime=C_1[x_1^+, x_1]C_2[y_2, x_2^\prime]$, $Q_1^\prime=C_2[{x_2^\prime}^+, y_2^-]$, $P_2^
\prime=C_1[x_1^\prime, x_1]C_2[y_2, x_2^-]$ and $Q_2^\prime=C_1[x_1^+, {x_1^\prime}^-]\cup \{x_2\}$. Clearly, $|C_1|+|C_2|=|P_1^\prime|+|Q_1^\prime|=|P_2^\prime|+|Q_2^\prime|$.
By Lemma \ref{lemma 4} and the choice of $s$, we have $e(L_2, \cup_{i=3}^sC_i)\le \sum_{i=3}^s|C_i|$ and
\begin{align*}
e(L_2, C_1\cup C_2) & =  e(\{x_1^+, x_2^\prime\}, P_1^\prime)+e(\{x_1^+, x_2^\prime\}, Q_1^\prime)+e(\{x_1^\prime, x_2^-\}, P_2^\prime)+e(\{x_1^\prime, x_2^-\}, Q_2^\prime) \\
 & \mbox{\ $\le  \frac{|P_1^\prime|}{2}+(0+\frac{|Q_1^\prime|}{2})+\frac{|P_2^\prime|}{2}+(\frac{|C_1[x_1^+, {x_1^\prime}^-]|+1}{2}+1)$\ }\\
 & =   |C_1|+|C_2|+1.
\end{align*}
Recall that $e(\{x_2, v\},C_3)\ge \frac{1}{2}|C_3|+1$, it follows from  Lemma \ref{lemma 4} (1) that $G[V(C_3)\cup \{x_2, v\}]$ contains a cycle $C_3^\prime$ covering $S_{C_3}\cup \{x_2\}$. Note that $s$ is minimum, $G[V(C_1\cup C_2\cup H-x_2-v)]$ does not contain a cycle covering $S_{C_1\cup C_2-x_2}$.
This implies that $x_1^+x_2^\prime, x_1^\prime x_2^-\notin E$ and $e(L_2,H)=e(L_2, H-v)+e(L_2, \{v\})\le (|H|-1)+1=|H|$.
Hence, 
\begin{equation*}
\mbox{\ $2(n+1)\le e(L_2, G)\le |H|+(|C_1|+|C_2|+1)+\sum_{i=3}^s|C_i|=2n+1$, \ }
\end{equation*}
 a contradiction.
\end{proof}

\begin{clm}\label{claim 4.3}
Let $i,j\in \{1,\ldots, s\}$ with $i\ne j$. Then $e(X_{C_i}, Y_{C_j})=0$ or $e(Y_{C_i}, X_{C_j})=0$.
\end{clm}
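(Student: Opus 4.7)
Suppose for contradiction that both $e(X_{C_i}, Y_{C_j}) \ne 0$ and $e(Y_{C_i}, X_{C_j}) \ne 0$ for some $i \ne j$. After relabelling, take $i=1,\ j=2$, and fix edges $ad$ with $a\in X_{C_1},\ d\in Y_{C_2}$, and $cb$ with $c\in X_{C_2},\ b\in Y_{C_1}$. Applying Claim \ref{claim 4.2} to $ad$ (with the roles $i=1,j=2,x=a,y=d$) gives that the $C_2$-predecessor $d^-$ and $C_2$-successor $d^+$ of $d$ both lie in $S_{C_2}$ with $d_H(d^-)=d_H(d^+)=0$; symmetrically, applying Claim \ref{claim 4.2} to $cb$ gives $b^-, b^+\in S_{C_1}$ with $d_H(b^-)=d_H(b^+)=0$ for the two $C_1$-neighbors of $b$.

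The aim is to merge $C_1$ and $C_2$ into a single feasible cycle $C^*$ on $V(C_1\cup C_2)$ covering $S_{C_1}\cup S_{C_2}$. Since $s\ge k+1$, the collection $\{C^*, C_3, \ldots, C_s\}$ would then be $s-1$ disjoint feasible cycles covering $S$, contradicting the minimality of $s$. The natural candidate for $C^*$ threads both cross edges: $C^* = a\,d\,P_2\,c\,b\,P_1\,a$, where $P_2$ is one of the two arcs of $C_2$ from $d$ to $c$, and $P_1$ is one of the two arcs of $C_1$ from $b$ to $a$, yielding four candidate merged cycles. A direct inspection shows that the forward arc of $C_2$ from $d$ contains $d^+$ but omits $d^-$ unless it is Hamiltonian in $C_2$, which happens iff $c = d^-$; the symmetric statement holds for the backward arc. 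Because $d^-, d^+\in S_{C_2}$, the merged cycle covers $S_{C_2}$ only if $c\in\{d^-,d^+\}$, and analogously it covers $S_{C_1}$ only if $a\in\{b^-,b^+\}$. When both conditions hold, the appropriate arc choice immediately produces the desired $C^*$.

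The main obstacle is the remaining subcase, where $c \notin \{d^-,d^+\}$ or $a \notin \{b^-,b^+\}$, so that every candidate $C^*$ omits a vertex of $S_{C_1\cup C_2}$. To handle this, I would set $L = \{b^-, b^+, d^-, d^+\}\subseteq S$ and exploit $d_H(v)=0$ for each $v\in L$: every $v\in L$ is nonadjacent to every vertex of $Y_H$, so $\sigma_{1,1}(S)\ge n+1$ gives $d(v)\ge n+1-d(y)$ for each $y\in Y_H$, which summed over $v\in L$ forces a strong lower bound on $e(L, \cup_{m=1}^s C_m)$. Pitting this against the upper bounds for $e(L, C_m)$ coming from Lemma \ref{lemma 4} and Lemma \ref{lemma 6}, together with the minimality of $\sum_m |C_m|$, should produce either an auxiliary cross edge between $C_1$ and $C_2$ or a suitable chord inside one of them that allows the merged cycle to be rerouted through the missed $S$-vertex; this is the same general template used in Claims \ref{claim 4.1} and \ref{claim 4.2}. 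The hardest subcase is $|C_1|=|C_2|=4$, where Claim \ref{claim 4.2} already pins down all of $X_{C_1\cup C_2}$ as $S$-vertices with $d_H=0$ and the degree budget is tight; here I expect to invoke Lemma \ref{lemma 2} on quadrilaterals together with the full strength of $\sigma_{1,1}(S)\ge n+1$, in the spirit of the 4-cycle endgame at the close of the proof of Theorem \ref{theorem 2}, to extract the chord or cross edge needed to close the argument.
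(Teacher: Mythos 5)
Your reduction to the ``nice'' case is fine as far as it goes, but the heart of Claim \ref{claim 4.3} is exactly the remaining case $c\notin\{d^-,d^+\}$ or $a\notin\{b^-,b^+\}$, and there your text is a plan, not a proof, and the one concrete mechanism you propose does not work. You take $L=\{b^-,b^+,d^-,d^+\}\subseteq S$ and want to pair these vertices with vertices of $Y_H$ via $\sigma_{1,1}(S)\ge n+1$; but in this part of the argument $H=G-\cup_{i=1}^sC_i$ may well be empty (the covering cycles can be spanning), and even when $Y_H\ne\emptyset$ the inequality $d(v)\ge n+1-d(y)$ is useless without an upper bound on $d(y)$, which you do not have. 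So no ``strong lower bound on $e(L,\cup_m C_m)$'' follows, and the promised chord or auxiliary cross edge is never produced; invoking Lemma \ref{lemma 6} is also not justified here, since the $C_i$ of Subsection 4.2 are not a minimal system in the sense required by that lemma. Expecting a separate quadrilateral endgame via Lemma \ref{lemma 2} is likewise a guess; nothing in your sketch shows it closes.

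The paper's proof runs differently and you should note the two ideas it uses. First, an extremal choice over all crossing configurations: among all pairs $C_i,C_j$ carrying edges $x_1y_2\in E(X_{C_1},Y_{C_2})$, $y_1x_2\in E(Y_{C_1},X_{C_2})$, it picks one with $x_1=y_1^+$ or $x_2=y_2^+$ \emph{if possible} (your ``nice'' case). Second, for the chosen configuration it threads the two cross edges into two Hamiltonian paths of $G[V(C_1\cup C_2)]$, namely $P_1=C_1[y_1^+,y_1]C_2[x_2,x_2^-]$ and $P_2=C_2[y_2^+,y_2]C_1[x_1,x_1^-]$; minimality of $s$ plus Lemma \ref{lemma 4}(2) makes the endpoint pairs $(y_1^+,x_2^-)$ and $(y_2^+,x_1^-)$ nonadjacent $S$--$Y$ pairs with $e(L,C_1\cup C_2)\le|C_1|+|C_2|$, and Claim \ref{claim 4.2} kills their edges to $H$, so $\sigma_{1,1}(S)\ge n+1$ forces $e(L,\cup_{j\ge3}C_j)\ge\sum_{j\ge3}|C_j|+2$. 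This yields a third cycle $C_3$ and an edge $y_3y_3^+$ with $y_1^+y_3,\,x_1^-y_3^+\in E$, i.e.\ a new crossing configuration (between $C_1$ and $C_3$) that \emph{does} have consecutive attachment points; by the extremal choice the chosen configuration then satisfies $x_1=y_1^+$ or $x_2=y_2^+$, and in either case the cycles merge into one cycle covering $S_{C_1\cup C_3}$ or two cycles covering $S_{C_1\cup C_2\cup C_3}$, contradicting the minimality of $s$. Your proposal contains neither the extremal choice nor the endpoint-pairing through the threaded Hamiltonian paths, so the gap is genuine.
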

\begin{proof}
Suppose that there exist two disjoint cycles and two distinct edges, say $C_1$, $C_2$ and $x_1y_2, y_1x_2$, such that $x_1y_2\in E(X_{C_1}, Y_{C_2})$ and $y_1x_2\in E(Y_{C_1}, X_{C_2})$. Assume that $x_1\ne y_1^-$ and $x_2\ne y_2^-$. Choose $C_1$, $C_2$, $x_1$, $y_1$, $x_2$ and $y_2$ so that $x_1=y_1^+$ or $x_2=y_2^+$ if possible. 

Let $L=\{y_1^+, x_1^-, y_2^+, x_2^-\}$, $P_1=C_1[y_1^+, y_1]C_2[x_2, x_2^-]$ and $P_2=C_2[y_2^+, y_2]C_1[x_1, x_1^-]$. By the choice of $s$ and Lemma \ref{lemma 4} (2), we have $y_1^+x_2^-, y_2^+x_1^-\notin E$ and $e(L, C_1\cup C_2)\le \frac{|P_1|+|P_2|}{2}=|C_1|+|C_2|$. 
Moreover, by Claim \ref{claim 4.2} we have $y_1^+, y_2^+\in S$ and $d_H(y_1^+)=d_H(y_2^+)=0$. It follows that 
\begin{equation*}
\mbox{\ $e(L, \cup_{j=3}^sC_j)\ge 2(n+1)-2(\frac{|H|}{2}+0)-(|C_1|+|C_2|)=\sum_{j=3}^s|C_j|+2$. \ }
\end{equation*}
Thus there exists a cycle, say $C_3$, such that $e(L, C_3)\ge |C_3|+1$. 
By the symmetry of $\{y_1^+, x_1^-\}$ and $\{y_2^+, x_2^-\}$, we assume that $e(\{y_1^+, x_1^-\}, C_3)\ge \frac{|C_3|}{2}+1$ and then there exists an edge $y_3y_3^+\in E(C_3)$ such that $y_1^+y_3, x_1^-y_3^+\in E$. By the choice of $C_1$, $C_2$, $x_1$, $y_1$, $x_2$ and $y_2$, we have $x_1=y_1^+$ or $x_2=y_2^+$. If $x_1=y_1^+$, then $C_1[y_1^+, y_1]C_3[y_3^+, y_3]y_1^+$ is a cycle which covering $S_{C_1\cup C_3}$, this contrary to the choice of $s$. If $x_2=y_2^+$, then $C_1[y_1^+, x_1^-]C_3[y_3^+, y_3]y_1^+$ and $C_1[x_1, y_1]C_2[x_2, y_2]x_1$ are two disjoint feasible cycles which covering $S_{C_1\cup C_2\cup C_3}$, a contradiction again.
\end{proof}

In the following, we denote the predecessor of $y^-$ by $y^{2-}$, where $y\in V(G)$.

\begin{clm}\label{claim 4.4}
For each $C_i$ with $1\le i\le s$, there exist two disjoint cycles $C_j$, $C_l$ with $j, l\in \{1, \ldots, s\}\setminus \{i\}$ and three vertices $y_i, y_j, y_l$ such that $y_i^-y_j\in E(X_{C_i}, Y_{C_j})$, $y_j^-y_l\in E(X_{C_j}, Y_{C_l})$ and $y_l^-y_i\in E(X_{C_l}, Y_{C_i})$, where $y_i^-, y_j^-, y_l^-$ are the predecessor of $y_i, y_j, y_l$ in $C_i, C_j, C_l$, respectively \emph{(}See Figure 3\emph{)}. 

In particular, if $|C_p|\ge 6$ for some $p\in \{i, j, l\}$, then $y_p^{2-}y_p^+\notin E$, $e(\{y_p^{2-}, y_p^+\},C_p)=\frac{|C_p|}{2}+1$ and $e(\{y_p^{2-}, y_p^+\},C_q)=\frac{|C_q|}{2}$ for each $q\in \{1, \ldots, s\}\setminus \{p\}$.
%
%
% and three distinct edges $y_i^-y_i\in E(C_i)$, $y_j^-y_j\in E(C_j)$, $y_l^-y_l\in E(C_l)$ satisfying the following three conditions \emph{(}See Figure 2\emph{)}.
%
%\emph{(1)} $y_i^-y_j, y_j^-y_l, y_l^-y_i\in E$,
%
%\emph{(2)} $\{y_i^-, y_i^+, y_j^-, y_j^+, y_l^-, y_l^+\}\subseteq S$,
%
%\emph{(3)} $e(\{y_i^-, y_i^+, y_j^-, y_j^+, y_l^-, y_l^+\}, H)=0$.\\
%In particular, if there exists an integer $p\in \{i, j, l\}$ such that $|C_p|\ge 6$, then $y_p^{- -}y_p^+\notin E$, $e(\{y_p^{- -}, y_p^+\},C_p)=\frac{|C_p|}{2}+1$ and for each $q\in \{1, \ldots, s\}\setminus \{p\}$ we have $e(\{y_p^{- -}, y_p^+\},C_q)=\frac{|C_q|}{2}$.
\end{clm}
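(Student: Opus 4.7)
Define the auxiliary digraph $D$ on $\{1,\ldots,s\}$ with an arc $i\to j$ whenever $e(X_{C_i},Y_{C_j})\ne 0$. Claim~\ref{claim 4.3} gives that $D$ has no 2-cycle, and Claim~\ref{claim 4.1} gives that every vertex of $D$ is incident to at least one arc. For the fixed $C_i$, pick any such arc at $i$; after possibly reversing the roles of the two partition classes, assume it is $i\to j$, witnessed by $y_i^-y_j\in E(X_{C_i},Y_{C_j})$. Claim~\ref{claim 4.2} then supplies $y_j^-,y_j^+\in S$ with $d_H(y_j^\pm)=0$. To produce an arc $j\to l$ with $l\notin\{i,j\}$, note that $j\to i$ is forbidden by Claim~\ref{claim 4.3}, while the absence of every arc $j\to l$ with $l\ne i,j$, together with $d_H(y_j^+)=0$, forces $N(y_j^+)\subseteq Y_{C_j}$, whence $d(y_j^+)\le|C_j|/2$. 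Applying $\sigma_{1,1}(S)\ge n+1$ between $y_j^+\in S$ and non-adjacent $y$-vertices in other cycles produces large forced degrees there, and aggregating these via Claim~\ref{claim 4.2} at each receiving vertex conflicts with $|E(G)|\le n^2$. The closing arc $l\to i$ is produced by the same argument at $C_l$.

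For the ``in particular'' clause, WLOG $p=j$. I first show $y_j^{2-}y_j^+\notin E$ by minimality. Suppose otherwise. The four vertices $y_j^{2-},y_j^-,y_j,y_j^+$ together with the chord form a feasible quadrilateral $Q=y_j^{2-}y_j^-y_jy_j^+y_j^{2-}$. If $|S_{C_j}|=2$ then $S_{C_j}=\{y_j^-,y_j^+\}\subseteq V(Q)$, so replacing $C_j$ by $Q$ keeps $s$ fixed but strictly reduces $\sum_i|C_i|$ by $|C_j|-4\ge 2$, contradicting the choice of $\{C_1,\ldots,C_s\}$. If $|S_{C_j}|\ge 3$, form the single feasible cycle
\[
B:\;y_i^-\to y_j\to y_j^-\to y_l\to\cdots\to y_l^-\to y_i\to\cdots\to y_i^-
\]
using the three triangle arcs, the $C_j$-edge $y_jy_j^-$, and the long arcs $C_l[y_l,y_l^-]$ and $C_i[y_i,y_i^-]$; thus $B$ covers $V(C_i)\cup V(C_l)\cup\{y_j,y_j^-\}$. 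The remaining $|C_j|-2$ vertices of $C_j$ close via the chord into the cycle $L=y_j^+y_j^{2+}\cdots y_j^{2-}y_j^+$, feasible because $|S_L|=|S_{C_j}|-1\ge 2$. Then $\{B,L\}\cup\{C_q:q\notin\{i,j,l\}\}$ is a collection of $s-1$ disjoint feasible cycles covering $S$, contradicting the minimality of $s$.

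Since $y_j^+\in S$ and $y_j^{2-}y_j^+\notin E$, the degree condition gives $d(y_j^+)+d(y_j^{2-})\ge n+1$. I bound the sum contribution-by-contribution. For each $q\ne p$, if $e(\{y_j^{2-},y_j^+\},C_q)\ge|C_q|/2+1$, then Lemma~\ref{lemma 4}(1) applied with $C=C_q$ and the even-order path $P=y_j^{2-}y_j^-y_jy_j^+\subseteq G-C_q$ produces a hamiltonian cycle on $V(C_q\cup P)$; combined with the residual $C_p$-path this yields $s-1$ feasible cycles covering $S$, a contradiction; hence $e(\{y_j^{2-},y_j^+\},C_q)\le|C_q|/2$. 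Inside $C_p$, a similar minimality argument yields $e(\{y_j^{2-},y_j^+\},C_p)\le|C_p|/2+1$. Finally, $d_H(y_j^+)=0$ gives $e(\{y_j^{2-},y_j^+\},H)\le d_H(y_j^{2-})\le|X_H|=|H|/2$. Summing,
\[
n+1\le d(y_j^+)+d(y_j^{2-})\le\Bigl(\tfrac{|C_p|}{2}+1\Bigr)+\sum_{q\ne p}\tfrac{|C_q|}{2}+\tfrac{|H|}{2}=n+1,
\]
forcing equality everywhere and yielding the claimed values. The cases $p=i$ and $p=l$ are symmetric via the other two triangle arcs.

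The hardest step is producing the triangle in $D$: the naive arc-chasing argument produces only some directed cycle in $D$, not one through the prescribed $C_i$, so the contradiction must come from a sharp edge count combining Claims~\ref{claim 4.1}--\ref{claim 4.3} with $\sigma_{1,1}(S)\ge n+1$. A secondary delicate point is the residual-path analysis in the case $e(\{y_j^{2-},y_j^+\},C_q)>|C_q|/2$: one must verify that the leftover portion of $C_p$ can be discarded or re-attached without losing an $S$-vertex, which may require a further short case split according to $|S_{C_p}|$.
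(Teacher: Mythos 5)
Your proof of the ``in particular'' clause is essentially sound where it overlaps the paper (the quadrilateral replacement when $|S_{C_p}|=2$, the $B/L$ reconstruction when $|S_{C_p}|\ge 3$, and the bound on $e(\{y_p^{2-},y_p^+\},H)$), but the first and central part of the claim is not proved. The statement does not merely assert a directed triangle $i\to j\to l\to i$ in your auxiliary digraph $D$: it asserts three cross edges that interlock at \emph{consecutive} vertices, with the closing edge landing exactly at $y_i$, the successor of the vertex $y_i^-$ from which the first edge leaves. Your arc-chasing cannot deliver this vertex-level linkage, and the step you invoke to force the arcs (``aggregating forced degrees conflicts with $|E(G)|\le n^2$'') is not an argument -- nothing contradicts $|E(G)|\le n^2$ here -- as you yourself concede in your closing remarks. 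The paper gets both the triangle and the linkage in one stroke: starting from an edge $y_i^-y_j\in E(X_{C_i},Y_{C_j})$ (Claim~\ref{claim 4.1}), it considers the even path $C_i[y_i,y_i^-]C_j[y_j,y_j^-]$; by minimality of $s$ and Lemma~\ref{lemma 4}(2), $y_iy_j^-\notin E$ and $e(\{y_i,y_j^-\},C_i\cup C_j)\le\frac{|C_i|+|C_j|}{2}$, while Claim~\ref{claim 4.2} gives $y_j^-\in S$, $d_H(y_j^-)=0$, hence $e(\{y_i,y_j^-\},H)\le\frac{|H|}{2}$. The condition $\sigma_{1,1}(S)\ge n+1$ then forces $e(\{y_i,y_j^-\},\cup_{l\ne i,j}C_l)\ge\sum_{l\ne i,j}\frac{|C_l|}{2}+1$, so some $C_l$ contains consecutive vertices $y_l^-,y_l$ with $y_l^-y_i\in E$ and $y_jy_l\ldots$ more precisely $y_j^-y_l\in E$ -- exactly the required linked triangle. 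You need this (or an equivalent) argument; without it the cycle $B$ in your second part does not even exist.

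A secondary, fixable weakness: your route to $e(\{y_p^{2-},y_p^+\},C_q)\le\frac{|C_q|}{2}$ for $q\ne p$ via Lemma~\ref{lemma 4}(1) runs into the residual-path problem you flag (when $|S_{C_p}|\ge 4$ the leftover path of $C_p$ still carries uncovered $S$-vertices and cannot simply be discarded). The paper avoids this entirely: since $y_p^+\in X_{C_p}$ and $y_p^{2-}\in Y_{C_p}$, Claim~\ref{claim 4.3} says one of $e(X_{C_p},Y_{C_q})$, $e(Y_{C_p},X_{C_q})$ vanishes, so one of the two vertices has no neighbour in $C_q$ and the bound $\frac{|C_q|}{2}$ is immediate; the equality chain $n+1\le e(\{y_p^{2-},y_p^+\},G)\le\frac{|C_p|}{2}+1+\sum_{q\ne p}\frac{|C_q|}{2}+\frac{|H|}{2}=n+1$ then finishes the clause exactly as you intend.
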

\begin{proof}
By Claim \ref{claim 4.1}, there exists a cycle, say $C_j$, such that $e(C_i, C_j)\ge 1$, assume $y_i^-y_j\in E(C_i, C_j)$. Choose $C_j$ such that $y_i^-\in X_{C_i}$ if possible. 
Consider the path $C_i[y_i, y_i^-]C_j[y_j, y_j^-]$. Since $s$ is minimum, by Lemma \ref{lemma 4} (2), we have $y_iy_j^-\notin E$ and $e(\{y_i, y_j^-\}, C_i\cup C_j)\le \frac{|C_i|+|C_j|}{2}$. 
Moreover, by Claim \ref{claim 4.2}, we have $|\{y_i, y_j^-\}\cap S|=1$ and $e(\{y_i, y_j^-\}, H)\le \frac{|H|}{2}$.
Thus $e(\{y_i, y_j^-\}, \cup_{l\in \{1, \ldots, s\}\setminus\{i,j\}}C_l)\ge (n+1)-\frac{|H|}{2}-\frac{|C_i|+|C_j|}{2}=\sum_{l\in \{1, \ldots, s\}\setminus\{i,j\}}\frac{|C_l|}{2}+1$, which implies that there exist a cycle, say $C_l$, and an edge, say $y_l^-y_l\in E(C_l)$, such that $y_j^-y_l, y_l^-y_i\in E$. Clearly, we can choose $C_j$ such that $y_i^-\in X_{C_i}$ as $|\{y_i^-, y_i\}\cap X_{C_i}|=1$. So $C_j$ and $C_l$ are the cycles we need.

%Now by Claim \ref{claim 4.2} and $y_i^-y_j, y_j^-y_l, y_l^-y_i\in E$, the conditions (2) and (3) hold, so $C_j$ and $C_l$ are the cycles we need.

Now assume that $|C_p|\ge 6$ for some $p\in \{i, j, l\}$. If $|S_{C_p}|\ge 3$, then by Lemma \ref{lemma 4} (2) and the choice of $s$ we have $y_p^{2-}y_p^+\notin E$ and $e(\{y_p^{2-}, y_p^{+}\}, C_p)\le \frac{|C_p|-2}{2}+2=\frac{|C_p|}{2}+1$; if $|S_{C_p}|=2$, then by the fact that $\sum_{i=1}^s|C_i|$ is minimum we have $y_p^{2-}y_p^+\notin E$ and $e(\{y_p^{2-}, y_p^{+}\}, C_p)=4$. Recall that $y_p^+\in S$ and $d_H(y_p^+)=0$, by Claim \ref{claim 4.3} we have 
\begin{equation*}
\mbox{\ $n+1\le e(\{y_p^{2-}, y_p^{+}\}, G)\le \max\{\frac{|C_p|}{2}+1, 4\}+\sum_{q\in \{1,\ldots, s\}\setminus \{p\}} \frac{|C_q|}{2}+\frac{|H|}{2}=n+1$.\ }
\end{equation*}
 Thus $e(\{y_p^{2-}, y_p^+\},C_p)=\frac{|C_p|}{2}+1$ and $e(\{y_p^{2-}, y_p^+\},C_q)=\frac{|C_q|}{2}$ for each $q\in \{1, \ldots, s\}\setminus \{p\}$. 
\end{proof}
%%%%
\begin{center}
\begin{picture}(200,180)\linethickness{0.8pt}
\put(29.6,142.9){\circle{61.6}}
\put(101.6,54.9){\circle{61.6}}
\put(170.1,142.9){\circle{61.6}}
\put(6.5,122.5){\vector(-68,73){1}}
\put(193.2,163.2){\vector(68,-73){1}}
\put(78.5,34.5){\vector(-68,73){1}}
\put(4,136){$C_i$}
\put(185.8,136){$C_j$}
\put(95.8,30.1){$C_l$}
%\put(20.8,0){\footnotesize{Figure 2. The graph $G[V(C_i\cup C_j\cup C_l)]$}}
%
\color{black}\polygon*(49,119)(51.5,119)(51.5,121.5)(49,121.5)\polygon(49,119)(51.5,119)(51.5,121.5)(49,121.5)
\color{black}\polygon*(59,146)(61.5,146)(61.5,148.5)(59,148.5)\polygon(59,146)(61.5,146)(61.5,148.5)(59,148.5)
\Line(59,147.6)(139.6,147.6)
\color{black}\polygon*(90,82.5)(92.5,82.5)(92.5,85)(90,85)\polygon(90,82.5)(92.5,82.5)(92.5,85)(90,85)
\color{black}\polygon*(123,74.5)(125.5,74.5)(125.5,77)(123,77)\polygon(123,74.5)(125.5,74.5)(125.5,77)(123,77)
\Line(58.5,135)(91.5,83)
\color{black}\polygon*(139.5,133)(142,133)(142,135.5)(139.5,135.5)\polygon(139.5,133)(142,133)(142,135.5)(139.5,135.5)
\color{black}\polygon*(144,161)(146.5,161)(146.5,163.5)(144,163.5)\polygon(144,161)(146.5,161)(146.5,163.5)(144,163.5)
\Line(140,134)(111.5,83.7)
\put(34.6,158.1){$y_i^{2-}$}
\put(45,146){$y_i^-$}
\put(45,133.1){$y_i$}
\put(35,121){$y_i^+$}
\put(149.5,160){$y_j^+$}
\put(153,121){$y_j^{2-}$}
\put(144,145.5){$y_j$}
\put(91.8,74){$y_l^-$}
\put(114,63.5){$y_l^+$}
\put(106,74){$y_l$}
\put(145,132){$y_j^-$}
\put(76.5,67){$y_l^{2-}$}
\put(59,134){\circle*{3.8}\color{white}\circle*{2.6}}
\put(139.6,147.6){\circle*{3.8}\color{white}\circle*{2.6}}
\put(111.5,83.7){\circle*{3.8}\color{white}\circle*{2.6}}
\put(53.5,162.1){\circle*{3.8}\color{white}\circle*{2.6}}
\put(147.6,121.1){\circle*{3.8}\color{white}\circle*{2.6}}
\put(79,76.3){\circle*{3.8}\color{white}\circle*{2.6}}
\put(185,44){$\in Y$}
\put(185,30){$\in S$}
\put(175,47){\circle*{3.8}\color{white}\circle*{2.6}}
\color{black}\polygon*(174,31.5)(176.5,31.5)(176.5,34)(174,34)\polygon(174,31.5)(176.5,31.5)(176.5,34)(174,34)
\end{picture}

\vspace{-15pt}
\footnotesize{Figure 3. The graph $G[V(C_i\cup C_j\cup C_l)]$}
\end{center}
%%%%

Now we choose three cycles from $G$ and transform them into two cycles to complete the proof.
Let $|C_1|=\max\{|C_i| : 1\le i\le s\}$. Then by Claim \ref{claim 4.4}, there exist two disjoint cycles, say $C_2$ and $C_3$ and three edges $y_1^-y_2\in E(X_{C_1}, Y_{C_2})$, $y_2^-y_3\in E(X_{C_2}, Y_{C_3})$ and $y_3^-y_1\in E(X_{C_3}, Y_{C_1})$.

%there exist two disjoint feasible cycles, say $C_2$ and $C_3$, and three edges, say $y_1^-y_1\in E(C_1)$, $y_2^-y_2\in E(C_2)$, $y_3^-y_3\in E(C_3)$ such that the following four conditions hold.
%
%(C1) $y_1^-y_2, y_2^-y_3, y_3^-y_1\in E$,
%
%(C2) $\{y_1^-, y_1^+, y_2^-, y_2^+, y_3^-, y_3^+\}\subseteq S$,
%
%(C3) $e(\{y_1^-, y_1^+, y_2^-, y_2^+, y_3^-, y_3^+\}, H)=0$,
%
%(C4) if $|C_1|\ge 6$, then $y_1^{- -}y_1^+\notin E$, $e(\{y_1^{- -}, y_1^+\},C_1)=\frac{|C_1|}{2}+1$ and $e(\{y_1^{- -}, y_1^+\},C_q)=\frac{|C_q|}{2}$ for each $q\in\{2,\ldots, s\}$.

First suppose that $|C_1|\ge 6$. Then by Claims \ref{claim 4.3}-\ref{claim 4.4}, we have $d_{C_3}(y_1^{2-})=\frac{|C_3|}{2}$, $d_{C_2}(y_1^{+})=\frac{|C_2|}{2}$ and $y_2^{2-}y_3^+\notin E$.
If $y_2^+y_3^{2-}\in E$, then $G[V(C_1\cup C_2\cup C_3)]$ contains two disjoint feasible cycles $y_1^-y_1y_3^-y_3y_2^-y_2y_1^-$ and $C_2[y_2^+, y_2^{2-}]C_1[y_1^+, y_1^{2-}]C_3[y_3^+, y_3^{2-}]y_2^+$ covering $S_{\cup_{i=1}^3C_i}$, this contrary to the choice of $s$. Thus $y_2^+y_3^{2-}\notin E$. So by Claims \ref{claim 4.3}-\ref{claim 4.4} we have $e(\{y_2^{2-}, y_2^+\},C_3)\le \frac{|C_3|}{2}-1$ and $e(\{y_3^{2-}, y_3^+\},C_2)\le \frac{|C_2|}{2}-1$, and thus $|C_2|=|C_3|=4$. Note that by Claim \ref{claim 4.2} we have $y_2^+, y_3^+\in S$ and $e(\{y_2^+, y_3^+\}, H)=0$. It follows from Claim \ref{claim 4.3} that 
\begin{equation*}
\mbox{\ $2(n+1)\le e(\{y_2^{2-}, y_2^+, y_3^{2-}, y_3^+\}, G)\le 2(\frac{|C_1|+\sum_{i=4}^s|C_i|}{2})+(4+\frac{|C_3|}{2}-1+4+\frac{|C_2|}{2}-1)+2(\frac{|H|}{2})=2n+2$.\ }
\end{equation*}
So we have $y_2^+y_3, y_2^{2-}y_1^-\in E$ as $d_{C_3}(y_2^+)= \frac{|C_3|}{2}-1$ and $d_{C_1}(y_2^{2-})= \frac{|C_1|}{2}$, which implies that $G[V(C_1\cup C_2\cup C_3)]$ contains two disjoint feasible cycles $y_1^-y_2^{2-}y_2^-y_3y_2^+y_2y_1^-$ and $C_1[y_1, y_1^{2-}]C_3[y_3^+, y_3^-]y_1$ covering $S_{\cup_{i=1}^3C_i}$, a contradiction.

Thus $|C_i|=4$ for all $i\in \{1,\ldots, s\}$ and $C_p=y_p^{2-}y_p^-y_py_p^+y_p^{2-}$ for each $p\in \{1,2,3\}$, where $S_{C_p}=\{y_p^-, y_p^+\}$. Let $L=\{y_1^{2-},y_1^+, y_2^{2-}, y_2^+, y_3^{2-}, y_3^+\}$. According to Claims \ref{claim 4.2}-\ref{claim 4.3}, we have $e(L,H)\le \frac{3|H|}{2}$ and $e(L, \cup_{i=4}^s C_i)\le \frac{3}{2}\sum_{i=4}^s|C_i|$ and we also have $y_1^{2-}y_2^+, y_2^{2-}y_3^+, y_3^{2-}y_1^+\notin E$.
It follows that 
\begin{equation*}
\mbox{\ $e(\{y_1^{2-}, y_2^+\}, C_3)+e(\{y_2^{2-}, y_3^+\}, C_1)+e(\{y_3^{2-}, y_1^+\}, C_2)\ge 3(n+1)-3(\frac{\sum_{i=4}^s|C_i|+|H|}{2})-6\times 2=9$.\ }
\end{equation*}
 Thus 
\begin{eqnarray*}
 & & e(\{y_1^{2-}, y_2^+\}, \{y_3^{2-}, y_3^+\})+e(\{y_2^{2-}, y_3^+\}, \{y_1^{2-}, y_1^+\})+e(\{y_3^{2-}, y_1^+\}, \{y_2^{2-}, y_2^+\})\ge 5, \\
   & \mbox{ or \ }& e(\{y_1^{2-}, y_2^+\}, \{y_3^{-}, y_3\})+e(\{y_2^{2-}, y_3^+\}, \{y_1^{-}, y_1\})+e(\{y_3^{2-}, y_1^+\}, \{y_2^{-}, y_2\})\ge 5.
\end{eqnarray*}
If the former holds, then $G[V(C_1\cup C_2\cup C_3)]$ contains two disjoint feasible cycles $y_1^-y_2y_2^-y_3y_3^-y_1y_1^-$ and $y_1^{2-}y_3^+y_3^{2-}y_2^+y_2^{2-}y_1^+y_1^{2-}$ covering $S_{\cup_{i=1}^3C_i}$, a contradiction. Therefore the latter must hold. According to the symmetry of $C_1$, $C_2$ and $C_3$, we may assume that $y_1^{2-}y_3^-\notin E$ or $y_1^+y_2\notin E$ if $e(\{y_1^{2-}, y_2^+\}, \{y_3^{-}, y_3\})$ $+$ $e(\{y_2^{2-}, y_3^+\}, \{y_1^{-}, y_1\}) + e(\{y_3^{2-}, y_1^+\}, \{y_2^{-}, y_2\}) = 5$. It follows that $G[V(C_1\cup C_2\cup C_3)]$ contains two disjoint feasible cycles $C_1^\prime$ and $C_2^\prime$, where $C_1^\prime=y_2y_1^+y_1^{2-}y_1^-y_2$, $C_2^\prime=y_1y_3^-y_3^{2-}y_2^-y_2^{2-}y_2^+y_3y_3^+y_1$, or $C_1^\prime=y_3^-y_1^{2-}y_1^+y_1y_3^-$, $C_2^\prime=y_1^-y_2y_2^+y_3y_3^+y_3^{2-}y_2^-y_2^{2-}y_1^-$,
%\begin{align*}
% & \mbox{\ $y_2y_1^+y_1^{2-}y_1^-y_2$ and $y_1y_3^-y_3^{2-}y_2^-y_2^{2-}y_2^+y_3y_3^+y_1$, \ } \\
%\mbox{ \ or\ } & \mbox{\ $y_3^-y_1^{2-}y_1^+y_1y_3^-$ and $y_1^-y_2y_2^+y_3y_3^+y_3^{2-}y_2^-y_2^{2-}y_1^-$,\ }
%\end{align*}
a contradiction.

The proof of Theorem \ref{theorem} is now complete.

%%%%
\section{Concluding remarks}
\label{concluding remarks}

First we will illustrate Remark (1) in Section \ref{Introduction} with the following four examples (see Figure 4), in which Examples 1 and 2 show that the lower bounds of $|S|$ in Theorems \ref{theorem 2} and \ref{theorem} are necessary, Example 3 shows that the degree condition $\sigma_{1,1}(S)\geq n+1$ is best possible for Theorem \ref{theorem}, and Example 4 shows that $\sigma_{1,1}(S)\ge n$ is necessary for Theorem \ref{theorem 2}.

\noindent\textbf{Example 1.} Let $k$ be a positive even integer. Let $G_1[S,Y]$ be a balanced bipartite graph of order $2n(=2(2k+1))$ satisfies the following: 

$\bullet$ $S=A\cup C\cup \{x\}$ and $Y=B\cup D\cup \{y\}$, where $|A|=|B|=|C|=|D|=k$ and $|S|=|Y|=2k+1$.

$\bullet$ $G_1[A, B]$ and $G_1[C,D]$ are complete bipartite graphs, $E(G_1[A, D])$ is a perfect matching, 

$\bullet$ $N_{G_1}(x)=B\cup\{y\}$ and $N_{G_1}(y)=C\cup\{x\}$.

Clearly, $\delta(G_1)=k+1$ and so $\sigma_{1,1}(S)=2k+2=n+1$. We will show that $G_1$ is not $S$-$k$-cyclable. In fact, if $G_1$ is $S$-$k$-cyclable, then $G_1$ must contains a feasible cycle with 3 vertices of $S$. If $xy$ is in some cycle $C_1$, then $|S_{C_1}|=3$ and $|S_{C_1}\cap A|=|S_{C_1}\cap C|=1$, which implies that $G_1-C_1$ contains at most $2\left \lfloor \frac{k-1}{2} \right \rfloor=k-2$ disjoint feasible cycles as $k$ is even, a contradiction. Thus there is no cycle which contains the edge $xy$. In this case, it is easy to show that $G_1$ is not $S$-$k$-cyclable, too. 

%(B) A graph with $|S|=2k+1$ and $\sigma_{1,1}(S)= n+1$: Let $k$ be a positive even integer. Let $G_2[S,Y]$ be a balanced bipartite graph of order $2n$ ($=2(2k+1)$) satisfies the following: 
%
%$\bullet$ $S=A\cup C\cup \{x\}$ and $Y=B\cup D\cup \{y\}$, where $|A|=|B|=|C|=|D|=k$ and $|S|=|Y|=2k+1$,
%
%$\bullet$ $G_2[A, B]$, $G_2[C,D]$, $G_2[\{x\}, B\cup\{y\}]$ and $G_2[C\cup\{x\}, \{y\}]$ are all complete bipartite graphs, 
%
%$\bullet$ $E(G_2[A, D])$ is a perfect matching. 
%
%Clearly, $\delta(G_2)=k+1=\frac{n+1}{2}$ and so $\sigma_{1,1}(S)=n+1$. We will show that $G_2$ has no 2-factor with exactly $k$ disjoint cycles. In fact, if $G_2$ has such 2-factor, then $G_2$ must contains a cycle of order 6 and $k-1$ disjoint quadrilaterals. If $xy$ is in some cycle $C_1$, then $|C_1|=6$ and $|V(C_1)\cap A|=|V(C_1)\cap B|=|V(C_1)\cap C|=|V(C_1)\cap D|=1$, which implies that $G_2-C_1$ contains at most $2\left \lfloor \frac{k-1}{2} \right \rfloor=k-2$ disjoint cycles as $k$ is even, a contradiction. Thus there is no cycle which contains the edge $xy$. In this case, it is easy to show that $G_2$ does not contain a 2-factor with exactly $k$ cycles, too. 

\noindent\textbf{Example 2.} Let $p$ be an odd integer with $p\ge 3$ and let $k=\frac{2p^2-3p+1}{2}$. Let $S$ be a subset of $X$ with $|S|=2k$ and $G_2[X,Y]$ a balanced bipartite graph of order $2n$ satisfies the following properties:

$\bullet$ $S=\{x\}\cup S_1\cup \cdots \cup S_{p}$, where $|S_i|=\frac{2k-1}{p}=2p-3$ for $1\leq i\leq p$,

$\bullet$ $Y_1=\{y_1,\ldots, y_p\}$ is a subset of $Y$, $N_{G_2}(x)=Y_1$ and $N_{G_2}(y_i)=S_i\cup\{x\}$ for $1\le i\le p$,

$\bullet$ $G_2[X\setminus\{x\}, Y\setminus Y_1]$ is a complete bipartite graph.

Clearly, $\sigma_{1,1}(S)=\min\{p+(n-1), (n-p+1)+(1+2p-3)\}=n+p-1=n+\frac{\sqrt{16k+1}-1}{4}$. It is easy to see that any cycle which contains $x$ must contains at least three vertices of $S$. Thus $G_2$ is not $S$-$k$-feasible as $|S|=2k$.

\noindent\textbf{Example 3.} Let $G_3[X,Y]$ be a balanced bipartite graph of order $2n$ and $S$ a subset of $X$ such that $d_Y(x)=1$ for some $x\in S$ and $G_3[X\setminus \{x\},Y]$ is a complete bipartite graph. Clearly, $\sigma_{1,1}(S)=1+(n-1)=n$ and $G_3$ is not $S$-$k$-cyclable for all positive integer $k$.

\noindent\textbf{Example 4.} Let $A\subseteq S\subseteq X$ and $B\subseteq Y$ with $|A|=|B|=|S|-(2k-1)$, where $|S|\ge 2k$. Let $G_4[X,Y]$ be a balanced bipartite graph of order $2n$ such that $G_4[X\setminus A,Y]$ is a complete bipartite graph and $E(G_4[A,B])$ is a perfect matching. Clearly, $\sigma_{1,1}(S)=1+(n-|A|)=n+2k-|S|$ and $G_4$ is not $S$-$k$-feasible as $d_{G_4}(x)=1$ for all $x\in A$ and $|\{d_{G_4}(u)\ge 2: u\in S\}|\le |S|-|A|=2k-1$.

\begin{figure}[h]
  \centering
  % Requires \usepackage{graphicx}
  \includegraphics[width=12cm,height=8cm]{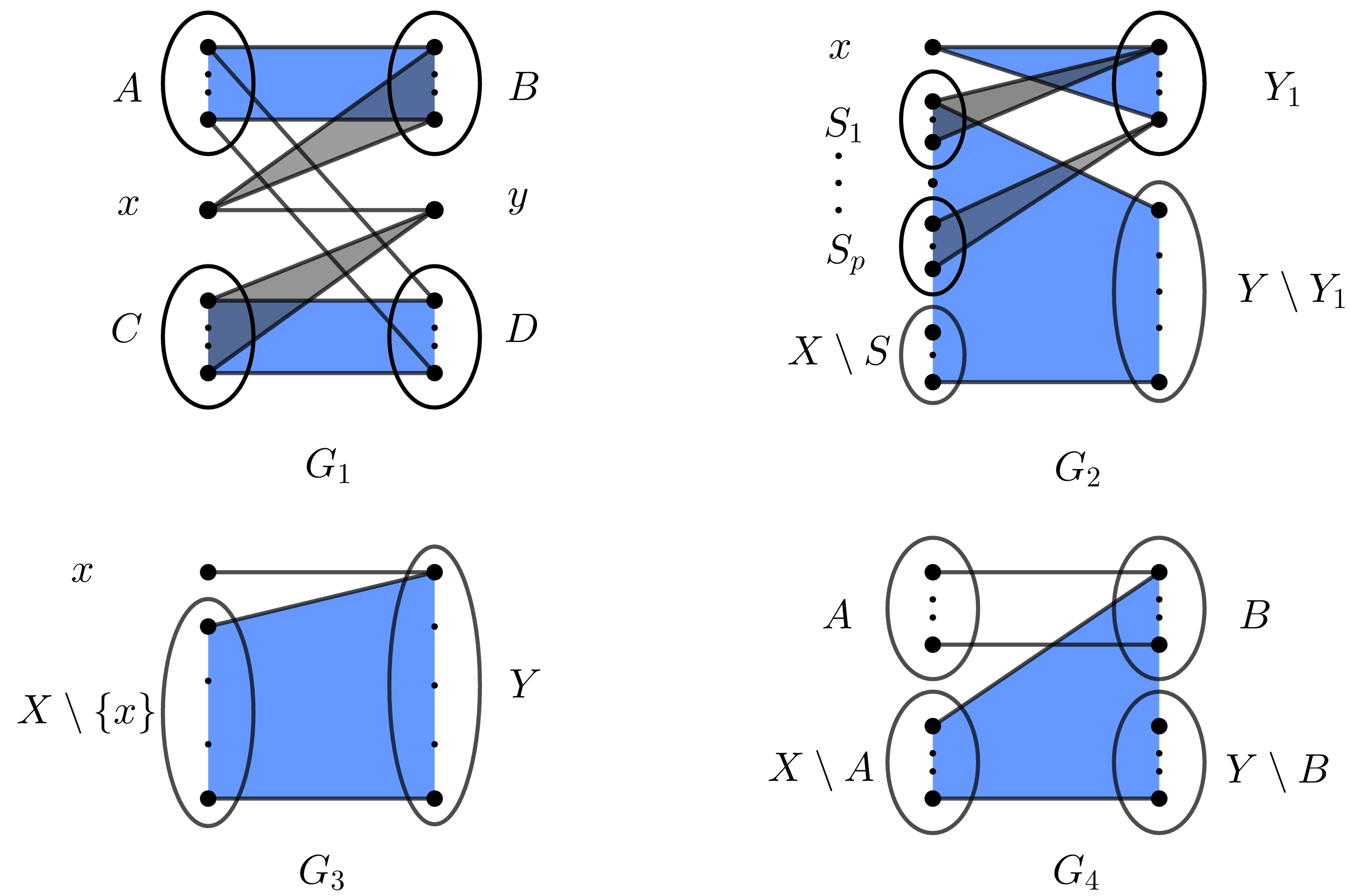}\\
\footnotesize{Figure 4. The graphs $G_1, G_2, G_3$ and $G_4$}
\end{figure}

Based on Examples 1-4 and Theorems \ref{theorem 2}-\ref{theorem}, we propose the following conjectures:

%Based on Examples 2 and 4 and Theorem \ref{theorem 2}, we propose the following conjecture:

\begin{conjecture}\label{conj1}
Let $k$ be a positive integer. Suppose that $G[X,Y]$ is a balanced bipartite graph of order $2n$, and $S$ is a subset of $X$ with $|S|\geq 2k+1$. If $\sigma_{1,1}(S)\geq n+2k-|S|+1$, then $G$ is $S$-$k$-feasible.
\end{conjecture}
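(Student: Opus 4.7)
The plan is to adapt the edge-maximal framework of the proof of Theorem \ref{theorem 2}, tracking how each surplus vertex of $S$ beyond $2k+1$ weakens the required degree condition. Set $m := |S| - 2k - 1 \ge 0$, so the hypothesis reads $\sigma_{1,1}(S) \ge n - m$, and proceed by induction on $k$ (the base $k=1$ is treated separately). For the inductive step, take an edge-maximal counterexample $G$: $G$ is not $S$-$k$-feasible, while $G+xy$ is for every non-adjacent pair $x\in S$, $y\in Y$. By induction $G$ is $S$-$(k-1)$-feasible, since the conjecture for $k-1$ and the same $S$ only demands $\sigma_{1,1}(S) \ge n - m - 2$. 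Fix a minimal $S$ system $\{C_1,\dots,C_{k-1}\}$ of $G$ and let $H = G - \bigcup_{i=1}^{k-1} C_i$; the goal is to assemble a $k$-th feasible cycle from $H$ and at most one of the existing $C_i$.

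The next step is to quantify how $|S_{C_i}|$ can grow under the weaker degree condition. Repeating the argument of Lemma \ref{lemma 3.1} with $L=S_{C_1}\cup S_{C_1}^+$ and $t=|S_{C_1}|$ but substituting $\sigma_{1,1}(S)\ge n-m$, the same count yields $\frac{|H|}{2}(t-2)+\frac{t|C_1|}{2} \le (m+4)t$; since $|C_1|\ge 2t$, this forces $t\le m+4$ whenever $|H|\ne 0$. A sharper version summed over all cycles ought to give a global bound $\sum_i |S_{C_i}| \le 2(k-1) + O(m)$, so that $|S_H|$ stays at least a small positive constant. Lemma \ref{lemma 3.2} and Claims \ref{claim xin1}--\ref{claim xin2} must then be reworked with $m$ additional units of slack in each edge-count: the $S$-matching $M$ in $H$ guaranteed by Lemma \ref{lemma 3.2}(1) shrinks by $O(m)$, while the absorbing mechanism of Claim \ref{claim xin1} (merging a 3-$S$-path with a feasible cycle into two disjoint feasible cycles) should generalise by using the extra $S$-vertices of $H$ to produce an additional feasible quadrilateral.

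The main obstacle I expect is Case 2 of Section \ref{pftheorem1}, whose case analysis hinges on $|S_{C_i}|=2$ and $|C_i|=4$; under the weaker hypothesis each cycle can host up to $m+4$ vertices of $S$ and so can be much longer, which explodes the enumeration of local configurations. A promising workaround is a \emph{reduction}: identify $m$ surplus vertices of $S$ in $H$ that can be absorbed one at a time into the existing cycles $C_i$ (each absorption raising $|S_{C_i}|$ by one) without destroying feasibility, and then apply Theorem \ref{theorem 2} verbatim to the reduced pair $(G',S')$ with $|S'|=2k+1$ and $\sigma_{1,1}(S')\ge n+1$. Whether such an absorption is always possible is essentially the content of the conjecture and will likely need a double-counting argument taken over all choices of minimal $S$ system. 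A secondary obstacle is the base case $k=1$: for large $|S|$ the hypothesis $\sigma_{1,1}(S)\ge n+3-|S|$ is very weak, and the existence of a feasible cycle must be derived through a separate closure-style argument on $S$ rather than by a direct appeal to Theorem \ref{Abderrezzak}.
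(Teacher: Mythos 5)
The statement you are trying to prove is Conjecture~\ref{conj1}: the paper does not prove it, it only poses it (supported by Examples~1--4 and by Wang's partial result for $S=X$), so any argument here has to stand entirely on its own --- and yours does not. What you have written is a plan rather than a proof, and the plan is missing its central step. The reduction you propose --- absorb the $m=|S|-2k-1$ surplus vertices of $S$ into the existing cycles and then quote Theorem~\ref{theorem 2} --- is exactly the hard content of the conjecture, as you yourself concede (``whether such an absorption is always possible is essentially the content of the conjecture''). No mechanism for the absorption is given, and nothing in the paper's machinery supplies one: Lemma~\ref{lemma 3.2} and Claims~\ref{claim xin1}--\ref{claim xin2} are tight counting arguments calibrated to $\sigma_{1,1}(S)\ge n+1$. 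For instance, the contradiction in Lemma~\ref{lemma 3.2}(1) comes from $d(x)+d(y)\le n-1<n+1$; under the hypothesis $\sigma_{1,1}(S)\ge n-m$ this inequality is no longer contradictory as soon as $m\ge 1$, so the existence of the $|S_{H}|$-$S$-matching (and everything built on it) collapses, and you give no quantitative account of how the ``extra $S$-vertices of $H$'' recover the lost $m+1$ units in each count.

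Two further gaps: (i) your adaptation of Lemma~\ref{lemma 3.1} only yields $|S_{C_i}|\le m+4$ per cycle; the ``sharper version summed over all cycles'' that would bound $\sum_i|S_{C_i}|$ and keep $|S_H|$ positive is asserted, not proved, and the subsequent case analysis of Section~\ref{pftheorem1} genuinely needs $|S_{C_i}|=2$ (and mostly $|C_i|=4$) to bound quantities like $e(L,C_i)$, so with longer cycles carrying up to $m+4$ vertices of $S$ you have no replacement argument; (ii) the base case $k=1$ under the very weak condition $\sigma_{1,1}(S)\ge n+3-|S|$ is deferred to an unspecified ``closure-style argument on $S$.'' Note also that the standard Bondy--Chv\'atal-type closure is not obviously compatible with the partial-degree condition $\sigma_{1,1}(S)$ restricted to pairs meeting $S$, so even the edge-maximality framework you borrow from Section~\ref{pftheorem1} needs justification at this level of generality. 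In short, the proposal identifies the right obstacles but resolves none of them; the statement remains open.
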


\begin{conjecture}\label{conj2}
Let $k$ be a positive integer. Suppose that $G[X,Y]$ is a balanced bipartite graph of order $2n$, and $S$ is a subset of $X$ with $|S|\geq 2k+2$. If $\sigma_{1,1}(S)\geq n+2$, then for any integer partition $|S|=n_1+\cdots+n_k$ with $n_i\ge 2$ $(1\le i\le k)$, $G$ contains $k$ disjoint cycles $C_1, \ldots, C_k$ such that $|S_{C_i}|=n_i$ for all $1\le i\le k$.
\end{conjecture}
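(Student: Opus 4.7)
The plan is to attack Conjecture~\ref{conj2} by an extremal argument building directly on Theorem~\ref{theorem}. Assume for contradiction the conjecture fails, and fix an edge-maximal counterexample $G$ together with a witness partition $|S| = n_1 + \cdots + n_k$ with $n_i \ge 2$. Edge-maximality means that for every nonadjacent pair $x \in S$, $y \in Y$ the graph $G + xy$ admits the prescribed partitioned cycle decomposition, which in turn forces good upper bounds on various edge counts to the eventual cycle system, analogous to the estimates in Lemmas~\ref{lemma 1} and \ref{lemma 6}.

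Since $\sigma_{1,1}(S) \ge n+2 > n+1$, Theorem~\ref{theorem} provides at least one system $\{C_1, \ldots, C_k\}$ of disjoint feasible cycles covering $S$. Among all such systems I would choose one minimising the discrepancy
\[
\Delta(\mathcal C) = \min_{\pi \in \mathfrak{S}_k} \sum_{i=1}^k \bigl|\,|S_{C_i}| - n_{\pi(i)}\,\bigr|,
\]
and, after relabelling, take $\pi = \mathrm{id}$. If $\Delta = 0$ we are done; otherwise some pair of indices $i, j$ satisfies $|S_{C_i}| > n_i$ and $|S_{C_j}| < n_j$, and in particular $|S_{C_i}| \ge 3$.

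The core step would be an exchange that strictly decreases $\Delta$, contradicting the choice of $\mathcal C$. I would pick $x \in S_{C_i}$ together with a vertex $y \in Y_{C_j}$ with $xy \notin E$, and attempt to insert $x$ between two consecutive $Y$-vertices of $C_j$ (via two edges from $x$) while simultaneously rerouting $C_i - x$ into a feasible cycle on $S_{C_i} \setminus \{x\}$; the latter should be available because $|S_{C_i}| \ge 3$ permits a Lemma~\ref{lemma 5}-style bypass. The key numerical input is that the extra $+1$ above the threshold of Theorem~\ref{theorem} yields an excess of roughly $\sum_\ell \tfrac{|C_\ell|}{2} + 1$ in $e(\{x, y\}, \bigcup_\ell C_\ell)$, which by pigeonhole and Lemma~\ref{lemma 4}(1) concentrates on some cycle; I would argue by a careful choice of $(x, y)$ together with edge-maximality that this excess can always be routed to land on $C_j$.

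The main obstacle will be twofold. First, quadrilaterals are delicate: if $|C_i| = 4$ then removing an $S$-vertex destroys $C_i$ entirely, so a single-vertex transfer cannot work and instead one must merge $C_i$ with $H = G - \bigcup_\ell C_\ell$ or with $C_j$ and resplit, an argument reminiscent of the exchange steps in Section~\ref{pftheorem2} and in Claim~\ref{claim 4.4}. Second, the target $C_j$ may itself be rigid (for instance $|S_{C_j}| = 2$ and $|C_j| = 4$), forcing simultaneous modification of two or more cycles and a revision of the matching permutation $\pi$. I expect these locally rigid configurations are precisely why the statement is posed as a conjecture rather than proved: handling them appears to require a global exchange that rebuilds several cycles at once, going beyond the one-cycle-at-a-time rerouting that sufficed for Theorems~\ref{theorem 2} and \ref{theorem}.
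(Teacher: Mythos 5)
The statement you were given is Conjecture~\ref{conj2}: the paper does not prove it, and explicitly poses it as open (only the case $S=X$ with $n$ large is known, by Czygrinow, DeBiasio and Kierstead \cite{Czygrinow}). So there is nothing in the paper to match your argument against; the only question is whether your sketch closes the conjecture, and it does not. The decisive step --- the exchange that strictly decreases your discrepancy $\Delta$ --- is never established. The degree-sum excess for a nonadjacent pair $(x,y)$ with $x\in S_{C_i}$, $y\in Y_{C_j}$ only gives, by pigeonhole, some cycle $C_\ell$ with $e(\{x,y\},C_\ell)\ge \frac{|C_\ell|}{2}+1$, and Lemma~\ref{lemma 4}(1) then only absorbs a path into that particular $C_\ell$; you have no mechanism forcing $\ell=j$, and an absorption into $C_\ell$ with $\ell\ne j$ (or an excess landing on $H$) need not decrease $\Delta$ and can destroy the covering of $S$. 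Even when $\ell=j$, inserting $x$ into $C_j$ ``via two edges from $x$'' requires $d_{C_j}(x)\ge 2$, which the pair condition does not supply (the excess may sit entirely on $y$), and simultaneously you need $C_i-x$ to close into a cycle containing exactly $|S_{C_i}|-1$ vertices of $S$; Lemma~\ref{lemma 5} is a statement about paths admitting no shortcut and gives no such rerouting.

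Moreover, the ``rigid'' configurations you defer are not a boundary case but the generic one: by Lemma~\ref{lemma 3.1}, once a covering system is made minimal and $G-\bigcup_\ell C_\ell\ne\emptyset$, every cycle satisfies $|S_{C_\ell}|=2$, so the situation in which single-vertex transfers are impossible and several cycles must be merged and resplit is exactly where any proof has to live --- and your proposal stops there, as you yourself concede in the final paragraph. As written this is a reasonable plan of attack (extremal counterexample, Theorem~\ref{theorem} as a starting system, discrepancy minimisation), but it contains no complete argument for any nontrivial case of the conjecture, so it cannot be accepted as a proof; the statement remains open.
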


The degree condition $\sigma_{1,1}(S)\geq n+2$ in Conjecture \ref{conj2} is necessary: consider the graph $G_1[S,Y]$ in Example 1 with $2k+1\ge 2\times 2+2$ and suppose that $G_1[S,Y]$ can be partitioned into two cycles $C_1$ and $C_2$ such that $|C_1|=2k$ and $|C_2|=2k+2$, note that the subset $B$ only has $|B|+1$ neighbours, we get $|B_{C_1}|=0$ or $|B_{C_2}|=0$, it is not difficult to see that such cycles $C_1$ and $C_2$ do not exist.  

Conjectures \ref{conj1} and \ref{conj2} has been studied for the case $S=X$. For example, Wang \cite{Wang bipartite} showed that Conjecture \ref{conj1} holds if $\delta(G)\ge k+1$ and $S=X$; Czygrinow, DeBiasio and Kierstead \cite{Czygrinow} showed that Conjecture \ref{conj2} holds if $n$ is sufficiently large in terms of $k$, $\delta(X)+\delta(Y)\ge n+2$ and $S=X$.
%Clearly, Theorem \ref{theorem 2} implies that Conjecture \ref{conj1} holds if $\sigma_{1,1}(S)\geq n+1$. In addition, Wang \cite{Wang bipartite} showed that Conjecture \ref{conj1} holds if $\delta(G)\ge k+1$ and $S=X$. 
For more results on this topic, see \cite{Chiba2018, Gould}.

%\vspace{-15pt}
%\begin{center}
%\footnotesize{Figure 3. The graphs $G_1, G_2, G_3$ and $G_4$}
%\end{center}

%\begin{fact}\label{fact 2.4}
%?
%\end{fact}
%\begin{proof}
%Suppose that $G$ contains at least two components $G_1$ and $G_2$ such that $l_S(G_1)\ne 0$ and $l_S(G_2)\ne 0$. Let $w_1\in S_{G_1}$ and $w_2\in S_{G_2}$. Since $\sigma_{1,1}(S)\ge n+1$, we have $d(w_1)\ge 1$ and $d(w_2)\ge 1$. Let $v_1\in N(w_1)$ and $v_2\in N(w_2)$. Clearly, $v_1\in Y_{G_1}$, $v_2\in Y_{G_2}$ and $w_1v_2, w_2v_1\notin E(G)$. It is not difficult to check that $d(w_1)+d(v_1)+d(w_2)+d(v_2)\le |X_{G_1}|+|Y_{G_1}|+|X_{G_2}|+|Y_{G_2}|\le 2n$, which contradicts with the fact that $d(w_1)+d(v_1)+d(w_2)+d(v_2)\ge 2n+2$.
%\end{proof}

%\begin{eqnarray}\label{eqclaim1}N_H(u_{i_j})\cap N_H(u_{i_k})= \emptyset \mbox{\ for each} j\neq k,\\
%N_H(u_{i_j}^+)\cap N_H(u_{i_k}^+)= \emptyset \mbox{\ for each} j\neq k.
%\end{eqnarray}

%\begin{equation}\label{thmeq1}\sum_{i=1}^{k-1}|C_i| \mbox{\ is minimum.}
%\end{equation}
%Subject to (\ref{thmeq1}), we choose $C_1, \ldots, C_{k-1}$ such that
%\begin{equation}\label{thmeq2}\sum_{i=1}^{k-1}|S_{C_i}| \mbox{\ is minimum.}
%\end{equation}

%%%%%

\end{document}